  \providecommand\BibTeX{{%
    \normalfont B\kern-0.5em{\scshape i\kern-0.25em b}\kern-0.8em\TeX}}}
\newcommand{\I}{\mathbb{I}}
\newcommand{\R}{{\mathbb{R}}}
\newcommand{\N}{{\mathbb{N}}}
\newtheorem{theorem}{Theorem}[section]
\newtheorem{definition}{Definition}
\newtheorem{remark}{Remark}
\newtheorem{lemma}[theorem]{Lemma}
\newtheorem{proposition}[theorem]{Proposition}
\newtheorem{corollary}[theorem]{Corollary}
\newtheorem{example}{Example}
\newtheorem{problem}{Problem}
\newtheorem{assumption}{Assumption}
\newcommand{\0}{\mathbb{0}}
\newcommand{\reach}[2]{\mathcal{R}_{\Sigma} \hspace*{-0.25em} \left( #1, #2 \right)}
\newcommand{\reachU}[1]{\mathcal{R}_{\Sigma} \hspace*{-0.25em} \left( #1 \right)}
\newcommand{\proj}[1]{\pi_{n} \left( #1 \right)}
\newcommand{\TL}{\mathrm{\text{$(\tau,\lambda)$}}}
\newcommand{\mL}{\mathrm{L}}
\newcommand{\mB}{\mathcal{B}}
\newcommand{\conv}{\mathrm{conv}}
\newcommand{\C}{\mathcal{C}}
\newcommand{\Cx}{\mathcal{C}_{x}}
\newcommand{\CTL}{\mathcal{C}_{x, \TL}}
\newcommand{\CmL}{\mathcal{C}_{x, \mL}}
\newcommand{\iC}{\mathcal{C}_{xv}}
\newcommand{\iCTL}{\mathcal{C}_{xv, \TL}}
\newcommand{\iCmL}{\mathcal{C}_{xv, \mL}}
\newcommand{\blk}{\mathrm{blkdiag}}
\begin{document}

\title[Controlled invariant sets: implicit closed-form representations and applications]{Controlled invariant sets: implicit closed-form representations and applications}

\author{Tzanis Anevlavis$^{*}$}
\address{T.Anevlavis and P.Tabuada are with the
Department of Electrical and Computer Engineering\\
University of California at Los Angeles,
Los Angeles, CA 90095 USA}
\email{t.anevlavis@ucla.edu}

\author{Zexiang Liu$^{*}$}
\address{Z.Liu and N.Ozay are with the Department of Electrical Engineering and Computer Science \\ 
University of Michigan, Ann Arbor, MI 48109 USA}
\email{zexiang@umich.edu}

\author{Necmiye Ozay}
\email{necmiye@umich.edu}

\author{Paulo Tabuada}
\email{tabuada@ucla.edu}

\thanks{This work was partially supported by the CONIX Research Center, one of six centers in JUMP, a Semiconductor Research Corporation (SRC) program sponsored by DARPA, and also by NSF grants 1553873, 1918123, 1931982.}

\thanks{$^{*}$ T.Anevlavis and Z.Liu are co-first authors.}

\renewcommand{\shortauthors}{Anevlavis \emph{et al.}}

\maketitle

\begin{abstract}
    We revisit the problem of computing (robust) controlled invariant sets for discrete-time linear systems. 
    Departing from previous approaches, we consider implicit, rather than explicit, representations for controlled invariant sets. Moreover, by considering such representations in the space of states and finite input sequences we obtain \emph{closed-form} expressions for controlled invariant sets. An immediate advantage is the ability to handle high-dimensional systems since the closed-form expression is computed in a single step rather than iteratively. 
    To validate the proposed method, we present thorough case studies illustrating that in safety-critical scenarios the implicit representation suffices in place of the explicit invariant set. 
    The proposed method is complete in the absence of disturbances, and we provide a weak completeness result when disturbances are present. 
\end{abstract}

\section{Introduction}

In an increasingly autonomous world, safety has recently come under the spotlight. A safety enforcing controller is understood as one that indefinitely keeps the state of the system within a set of safe states notwithstanding the presence of uncertainties. A natural solution that guarantees safety is to initialize the state of the system inside a Robust Controlled Invariant Set (RCIS) within the set of safe states. Any RCIS is defined by the property that any trajectory starting within, can always be forced to remain therein and, hence, inside the set of safe states. Consequently, RCISs are at the core of controller synthesis for safety-critical applications. 

Since the conception of the standard method for computing the Maximal RCIS of discrete-time systems \cite{bertsekas1972infreach}, which is known to suffer from poor scaling with the system's dimension and no guarantees of termination, numerous approaches attempted to alleviate these drawbacks. A non-exhaustive overview is found in Section \ref{subsec:litreview}. 

An alternative approach is to construct an implicit representation for an RCIS. The specific implicit representation used in this paper is a set in the higher dimensional space of states and finite input sequences. We argue that in many practical, safety-critical applications, such as Model Predictive Control (MPC) and supervisory control, knowledge of the explicit RCIS is not required and the implicit representation suffices. Consequently, by exploiting the efficiency of the implicit representation the aforementioned ideas are suitable for systems with large dimensions. 

In this manuscript, we propose a general framework for \emph{computing (implicit) RCISs} for discrete-time linear systems with additive disturbances, under polytopic state, input, and even mixed, constraints. We consider RCISs parameterized by collections of \emph{eventually periodic} input sequences and prove that this choice leads to a closed-form expression for an implicit RCIS in the space of states and finite input sequences. Moreover, this choice results in a systematic way to obtain larger RCISs, which we term a \emph{hierarchy}. 
Essentially, the computed sets include all states for which there exist eventually periodic input sequences that lead to a trajectory that remains within the safe set indefinitely.
Once the (implicit) RCIS is computed, any controller rendering the RCIS invariant can be used in practice and a fixed periodic input is not chosen or used. Moreover, we show that this parameterization is rich enough, such that: 1) in the absence of disturbances, our method is complete and sufficient to approximate the Maximal CIS arbitrarily well; 2) in the presence of disturbances, a weak completeness result is established, along with a bound for the computed RCIS that can be approximated arbitrarily well. Finally we study, both theoretically and experimentally, safety-critical scenarios and establish that the efficient implicit representation suffices in place of computing the exact RCIS. In practice, the use of implicit RCISs can be done via optimization programs, e.g., a Linear Program (LP), a Mixed-Integer (MI) program, or a Quadratic Program (QP), and is only limited by the size of the program afforded to solve. 

In order to make for a more streamlined presentation, a review of the existing related literature is found at the end of the manuscript.

\textbf{Notation:} Let $\R$ be the set of real numbers and $\N$ be the set of positive integers. For sets $P, Q \subseteq \R^n$, the Minkowski sum is \mbox{$P+Q = \{x\in \R^n | x=p+q, p\in P, q\in Q\}$} and the Minkowski difference is $Q-P = \{x\in \R^n | x+p\in Q, \forall p\in P\}$.
By slightly abusing the notation, the Minkowski sum of a singleton $\{x\}$ and a set $P$ is $x+P$. 
The Hausdorff distance between $P$ and $Q$, denoted by $d(P,Q)$, is induced from the Euclidean norm in $\R^n$.
We denote a block-diagonal matrix $M$ with blocks $M_1, \dots, M_N$ by $M = \blk ( M_1, \dots, M_N )$. Moreover, given a matrix $A\in\R^{m \times n}$ and a set $P\subseteq\R^n$, the linear transformation of $P$ through $A$ is \mbox{$A P = \{A x\in \R^m | x \in P\}$}. Given a set $S \subset \R^n \times \R^m$, its projection onto the first $n$ coordinates is $\proj{S}$. 
For any $N\in\N$, let \mbox{$[N]=\{1,2, \cdots,N\}$}. 
Let $\I$ and $\mathbb{0}$ be the identity and zero matrices of appropriate sizes respectively, while $\mathbb{1}$ is a vector with all entries equal to $1$. 

\section{Problem formulation}
\label{sec:setup}
Let us begin by providing the necessary definitions.
\begin{definition}[Discrete-time linear system]
	\label{def:dtls}
	A \emph{Discrete-Time Linear System} (DTLS) $\Sigma$ is a linear difference equation: 
	\vspace{-1.1mm}
	\begin{align}
	\label{eq:dtls}
		x^+ = A x + B u + E w ,
	\end{align}
	where $x \in \R^n$ is the state of the system, $u \in \R^m$ is the input, and $w \in W \subseteq \R^{d}$ is a disturbance term. Moreover, we have that $A \in \R^{n \times n}$,  \mbox{$B \in \R^{n \times m}$}, and \mbox{$E \in \R^{n \times d}$}. 
\end{definition}

\begin{definition}[Polytope]
	\label{def:polyhedron}
	A \emph{polytope} $S \subset \R^n$ is a bounded set of the form: 
	\begin{align}
	\label{eq:polytope}
		S =  \left \{ x \in \R^n \ \middle| \ G x \leq f \right \} , 
	\end{align}
	where $G \in \R^{k \times n}$, $f \in \R^k$ for some $k>0$. 
\end{definition}

\begin{definition}[Robust Controlled Invariant Set]
	\label{def:cis}
	Given a DTLS $\Sigma$ and a safe set $S_{xu} \subset \R^n\times\R^m$, that is, the set defining the state-input constraints for $\Sigma$, a set $\mathcal{C} \subseteq \proj{S_{xu}}$ is a \emph{Robust Controlled Invariant Set} for $\Sigma$ within $S_{xu}$ if:
	\begin{align*}
		x \in \mathcal{C} \Rightarrow \exists u \in \R^m \text{ s.t. } (x,u) \in S_{xu}, A x + B u + EW \subseteq \mathcal{C} . 
	\end{align*}
\end{definition}

\begin{definition}[Admissible Input Set]
\label{def:admissible}
	Given an RCIS $\mathcal{C}$ of a DTLS $\Sigma$ within its safe set  $S_{xu}$,  the set $\mathcal{A}(x, \mathcal{C}, S_{xu})$ of admissible inputs at a state $x$ is: 
	\begin{align*}
	\mathcal{A}(x, \mathcal{C}, S_{xu}) =  \{u\in\R^m | (x,u)\in S_{xu}, Ax+Bu+EW\subseteq \mathcal{C}\}.
	\end{align*}
\end{definition}

\begin{assumption}
\label{asmpt:prbAssumption}
In this manuscript we focus on systems and safe sets that satisfy the following:\\
\hspace*{1em} 1) There exists a suitable state feedback transformation that makes the matrix $A$ of system $\Sigma$ \emph{nilpotent}. For a nilpotent matrix, there exists a $\nu \in \N$ such that $A^\nu = 0$. 	\\
\hspace*{1em} 2) The \emph{safe set} $S_{xu} \subset \R^n \times \R^m$ and the \emph{disturbance set} $W \subset \R^d$ are both polytopes.
\end{assumption}

For any system $\Sigma$ satisfying Assumption \ref{asmpt:prbAssumption}, let \mbox{$K\in \R^{m\times n}$} be the feedback gain such that $A+BK$ is nilpotent. We construct a system $\Sigma'$ by pre-feedbacking $\Sigma$ with $u=Kx+u'$: 
\begin{align*}
    x^{+} = (A+BK)x + Bu' + Ew,
\end{align*}
where $u'\in\R^m$ is the input of the system $\Sigma'$. The safe set for $\Sigma'$ is the polytope induced from the safe set $S_{xu}$ of $\Sigma$ as $S_{xu}' = \{(x,u')\in\R^n\times\R^m \mid (x, Kx + u') \in S_{xu}\}$. 

\begin{remark} 
\label{rem:equiv}
It is easy to verify that any RCIS $\C$ of $\Sigma$ within $S_{xu}$ is also an RCIS of $\Sigma'$ within $S_{xu}'$ and vice versa. Since RCISs of the two systems are the same, the problem of computing an RCIS of $\Sigma$ within $S_{xu}$ is \emph{exactly equivalent} to the problem of finding an RCIS of $\Sigma'$ within $S_{xv}'$.
\end{remark}

Given Remark \ref{rem:equiv}, the state feedback transformation simply moves the original constraints to the transformed space and, thus, it neither changes the original problem nor restricts the control authority. Although constraints on the pair $(x,u)$ become constraints on the pair $(x,u')$, they are still affine and can be handled by the proposed algorithms. Therefore, in the remainder of the paper, we assume that the system in \eqref{eq:dtls} is already pre-feedbacked, with the matrix $A$ being nilpotent.

\begin{remark}
\label{rem:nilpotentsystems}
    For any controllable system $\Sigma$ there exists a state feedback transformation satisfying Assumption~\ref{asmpt:prbAssumption} \cite[Ch.3]{antsaklis1997linear}. In this case, the nilpotency index $\nu$ is equal to the largest controllability index of $\Sigma$. 
\end{remark} 

The main goal of this paper is to compute an \emph{implicit representation of an RCIS in closed-form}. Hereafter, we refer to this representation as the \emph{implicit RCIS}. 

\begin{definition}[Implicit RCIS]
	\label{def:icis}
	Given a DTLS $\Sigma$, a safe set $S_{xu} \subset \R^n\times\R^m$, and some integer $q\in \N$, a set $\iC \subseteq \R^n\times\R^{q}$ is an \emph{Implicit RCIS} for $\Sigma$ if its projection $\proj{\iC}$ onto the first $n$ dimensions is an RCIS for $\Sigma$ within $S_{xu}$. 
\end{definition}

The following result stems directly from Definition~\ref{def:cis}.
\begin{proposition}
\label{prop:cisunionconvexhull}
	The union of RCISs and the convex hull of an RCIS are robustly controlled invariant.
\end{proposition}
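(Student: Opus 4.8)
The plan is to handle the two assertions separately, since the union claim is essentially immediate whereas the convex-hull claim is where the geometric structure does the work. Throughout I would rely only on Definition~\ref{def:cis}, together with two structural facts that are available here: the safe set $S_{xu}$ is convex (it is a polytope by Assumption~\ref{asmpt:prbAssumption}), and the one-step map $(x,u)\mapsto Ax+Bu$ is linear.

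For the union, let $\{\C_i\}_{i\in I}$ be a family of RCISs within $S_{xu}$ and set $\C=\bigcup_{i\in I}\C_i$. Containment $\C\subseteq\proj{S_{xu}}$ is inherited from the members. Given $x\in\C$, I pick an index $i$ with $x\in\C_i$; invariance of $\C_i$ then furnishes an input $u$ with $(x,u)\in S_{xu}$ and $Ax+Bu+EW\subseteq\C_i\subseteq\C$. This verifies the defining implication of Definition~\ref{def:cis}, so $\C$ is an RCIS, and I expect no obstacle here.

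For the convex hull, write $\widehat{\C}=\conv(\C)$ for an RCIS $\C$. First, $\widehat{\C}\subseteq\proj{S_{xu}}$ because $\proj{S_{xu}}$ is itself convex (a projection of a polytope), hence already contains the convex hull of $\C\subseteq\proj{S_{xu}}$. Now I take $x\in\widehat{\C}$ and express it as a finite convex combination $x=\sum_j\lambda_j x_j$ with $x_j\in\C$, $\lambda_j\ge 0$, $\sum_j\lambda_j=1$. For each $j$, invariance of $\C$ gives an input $u_j$ with $(x_j,u_j)\in S_{xu}$ and $Ax_j+Bu_j+EW\subseteq\C$. I then propose the averaged input $u=\sum_j\lambda_j u_j$; convexity of $S_{xu}$ immediately yields $(x,u)=\sum_j\lambda_j(x_j,u_j)\in S_{xu}$.

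The only real subtlety, and the step I expect to be the crux, is showing $Ax+Bu+EW\subseteq\widehat{\C}$. I would fix an arbitrary disturbance $w\in W$ and use the \emph{same} $w$ across all indices: linearity together with $\sum_j\lambda_j=1$ gives
\begin{align*}
Ax+Bu+Ew &=\sum_j\lambda_j\bigl(Ax_j+Bu_j\bigr)+Ew \\
&=\sum_j\lambda_j\bigl(Ax_j+Bu_j+Ew\bigr).
\end{align*}
Each summand $Ax_j+Bu_j+Ew$ lies in $Ax_j+Bu_j+EW\subseteq\C$, so the right-hand side is a convex combination of points of $\C$ and hence lies in $\widehat{\C}$; as $w\in W$ was arbitrary, $Ax+Bu+EW\subseteq\widehat{\C}$. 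The point to be careful about is precisely this reuse of a single $w$ in every term: distributing the weights $\lambda_j$ over distinct disturbances would break the identity, whereas holding $w$ fixed lets the affine disturbance term pass through the averaging untouched because the weights sum to one.
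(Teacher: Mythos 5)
Your proof is correct, and it carries out exactly the direct-from-the-definition verification that the paper alludes to when it states the proposition "stems directly from Definition~\ref{def:cis}" without writing out a proof. The two points you flag as needing care --- convexity of $S_{xu}$ (guaranteed by Assumption~\ref{asmpt:prbAssumption}) and holding a single $w\in W$ fixed across the convex combination so the disturbance term passes through the averaging --- are precisely the details the paper leaves implicit, and you handle both correctly.
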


For dynamical systems, i.e., systems $\Sigma$ as in \eqref{eq:dtls} where \mbox{$B=0$}, the analogous concept to RCISs is defined below.
\begin{definition}[Robust Positively Invariant Set]
	\label{def:pis}
	Given a dynamical system $\Sigma \ : \ x^+ = Ax + Ew$ and a safe set $S_{x} \subset \R^n$, a set $\mathcal{C} \subset \R^n$ is a \emph{Robust Positively Invariant Subset} (RPIS) for $\Sigma$ within $S$ if:
	\begin{align*}
		x \in \mathcal{C} \Rightarrow & \forall w \in W, \ A x + E w \in \mathcal{C} .
	\end{align*}
\end{definition}

We define the \emph{accumulated disturbance set} at time $t$ by:
\begin{align}
\label{eq:acc_dist_set}
    \overline{W}_t &=  \sum^{t}_{i=1} A^{i-1} E W.
\end{align}
By nilpotency of $A$ we have that:
\begin{align}
\label{eq:min_rpis}
\overline{W}_{\infty} &=  \sum^{ \infty}_{i=1} A^{i-1} E W = \sum^{\nu}_{i=1} A^{i-1} E W.
\end{align}
In the literature, $\overline{W}_{\infty}$ is called the \emph{Minimal RPIS} of the system $x^{+} = Ax  + Ew$ \cite{rakovic2005invariant}. 

The next operator is used throughout this manuscript. 
\begin{definition}[Reachable set]
\label{def:reach}
Given a DTLS $\Sigma$ and a set $X \subset \R^n$, define the \emph{reachable set} from $X$ under input sequence $\{u_{i}\}_{i=0}^{t-1}$ as: 
\begin{align}
\label{eq:reachableset}
    \hspace*{-0.5em}
	\reach{X}{\{u_i\}_{i=0}^{t-1}} = A^t X + \sum^t_{i=1} A^{i-1} B u_{t-i}
	+ \overline{W}_t .
\end{align}
\end{definition}
Intuitively, $\reach{X}{\{u_{i}\}_{i=0}^{t-1}}$ maps a set $X$ and an input sequence $\{u_{i}\}_{i=0}^{t-1}$ to the set of all states that can be reached from  $X$ in $t$ steps when applying said input sequence. 
Conventionally, $\reach{X}{\emptyset} = X$ and when $X$ is a singleton, i.e., $X = \{x\}$, we abuse notation to write $\reach{x}{\{u_{i}\}_{i=0}^{t-1}}$. 

\section{Implicit representation of controlled invariant sets for linear systems}
\label{sec:fin-rep-cis}
The classical algorithm that computes the \emph{Maximal} RCIS consists of an iterative procedure \cite{bertsekas1972infreach, glover1971linsysdist} and theoretically works for any discrete-time system and safe set. However, this approach is known to suffer from the curse of dimensionality and its termination is not guaranteed. To alleviate these drawbacks, we propose an algorithm that is guaranteed to terminate and computes an implicit RCIS efficiently in closed-form, thus being suitable for high dimensional systems. 
Moreover, by optionally projecting the implicit RCIS back to the original state-space one computes an explicit RCIS. Overall, the proposed algorithm computes controlled invariant sets in one and two moves respectively. 

The goal of this section is to present a \emph{finite implicit representation} of an RCIS. That is, we provide a \emph{closed-form} expression for an \emph{implicit RCIS} characterized by constraints on the state and on a finite input sequence, whose length is the design parameter. This results in a polytopic RCIS in a higher dimensional space. Intuitively, the implicit RCIS contains the pairs of states and appropriate finite input sequences that guarantee that the state remains in the safe set indefinitely.

\subsection{General implicit robust controlled invariant sets}
We begin by discussing a general construction of a polytopic implicit RCIS. First, we consider inputs $u_t$ to $\Sigma$ that evolve as the output of a linear dynamical system, $\Sigma_C$, whose state is a \emph{sequence of $q$ inputs, $v$}, i.e.: 
\begin{align}
\label{eq:linearcontroller}
    \Sigma_C \quad : \quad
	\begin{split}
	v_{t+1} = P v_{t},	\\
	u_t = H v_{t},
\end{split}
\end{align}
where $v \in \R^{m q}$, $P \in \R^{m q \times m q}$, and $H \in \R^{m \times m q}$.  
The choice of a linear dynamical system stems from our safe set being a polytope per Assumption~\ref{asmpt:prbAssumption}. By using system $\Sigma_C$ we preserve the linearity of the safe set constraints and we are, hence, able to compute polytopic RCISs within polytopic safe sets. The resulting input to $\Sigma$ can be expressed as:
\begin{align}
\label{eq:genPolicy}
	u_t = H v_t = H P^{t} v_0	,
\end{align}
for an initial choice of $v_0 \in \R^{m q}$.
We can then lift system $\Sigma$, after closing the loop with $\Sigma_C$, to the following companion dynamical system:
 \begin{align}
 \label{eq:companionsystem}
 	\Sigma_{xv} \quad : \quad
 	\begin{bmatrix} x^+ \\ v^+ \end{bmatrix}
 	= 
 	\begin{bmatrix} A & B H \\ \mathbb{0} & P \end{bmatrix}
 	\begin{bmatrix} x \\ v \end{bmatrix}
 	+
 	\begin{bmatrix} E \\ \mathbb{0} \end{bmatrix} w.
 \end{align}
 
Given the safe set $S_{xu}$, we construct the companion safe set \mbox{$S_{xv} = \left\{(x,v)\in\R^n\times\R^{mq} \mid (x,H v)\in S_{xu} \right\}$}. The companion system of \eqref{eq:dtls} is the closed-loop dynamics of \eqref{eq:dtls} with a control input in~\eqref{eq:genPolicy}. Then, the companion safe set simply constrains the closed-loop state-input pairs in the original safe set, i.e., $(x_t, H v_t)\in S_{xu}$. 

\begin{theorem}[Generalized implicit RCIS]
\label{thm:implicit_RCIS2}
Let $\iC$ be an RPIS of the companion system $\Sigma_{xv}$ within the companion safe set $S_{xv}$. The projection of $\iC$ onto the first $n$ coordinates, $\proj{\iC}$, is an RCIS of the original system $\Sigma$ within $S_{xu}$. In other words, $\iC$ is \emph{an implicit RCIS} of $\Sigma$.
\end{theorem}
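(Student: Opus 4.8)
The plan is to verify directly that $\C := \proj{\iC}$ meets the two requirements in Definition~\ref{def:cis}: that $\C \subseteq \proj{S_{xu}}$, and that every $x \in \C$ admits a control input keeping the one-step robust reachable set inside $\C$. The entire argument amounts to unwinding the definitions of the companion system $\Sigma_{xv}$, the companion safe set $S_{xv}$, and the RPIS property, with the key realization being that the lifted coordinate $v$ records a valid input choice while the invariance of $\iC$ automatically furnishes the corresponding successor witness.

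First I would establish the containment $\C \subseteq \proj{S_{xu}}$. Take $x \in \C = \proj{\iC}$; by definition of the projection there is some $v \in \R^{mq}$ with $(x,v) \in \iC$. Since $\iC$ is an RPIS \emph{within} $S_{xv}$, we have $\iC \subseteq S_{xv}$, so $(x,v) \in S_{xv}$, which by the definition of the companion safe set means $(x, Hv) \in S_{xu}$. Hence $x \in \proj{S_{xu}}$, as required.

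Next I would verify the invariance condition. Fix $x \in \C$, keep the same witness $v$ with $(x,v) \in \iC$, and set $u := Hv$. The previous paragraph already yields $(x,u) = (x, Hv) \in S_{xu}$, so the state-input constraint holds. It remains to show $Ax + Bu + EW \subseteq \C$. Because $\iC$ is robustly positively invariant for $\Sigma_{xv}$, applying Definition~\ref{def:pis} to $(x,v) \in \iC$ gives, for every $w \in W$,
\[
\begin{bmatrix} Ax + BHv + Ew \\ Pv \end{bmatrix} \in \iC.
\]
Substituting $u = Hv$ and projecting onto the first $n$ coordinates gives $Ax + Bu + Ew \in \proj{\iC} = \C$ for every $w \in W$, i.e.\ $Ax + Bu + EW \subseteq \C$. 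This settles both requirements and shows $\C$ is an RCIS of $\Sigma$ within $S_{xu}$.

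As for the main difficulty: there is no heavy computation here, and the only subtlety is conceptual rather than technical. One must notice that the successor coordinate $v^+ = Pv$ of the companion dynamics serves as the witness certifying that the projected successor state again lies in $\C$; in other words, the RPIS invariance in the lifted space silently supplies the ``next step'' of the control plan that the existential quantifier in the RCIS definition demands in the original space. Once this correspondence between the lifted coordinate and the existential input is identified, the proof reduces to the definition-chasing above.
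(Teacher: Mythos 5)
Your proof is correct and follows essentially the same route as the paper's: pick a witness $v$ with $(x,v)\in\iC$, set $u=Hv$, use the companion safe set to get $(x,u)\in S_{xu}$, and use the RPIS property of $\iC$ under $\Sigma_{xv}$ to obtain $(Ax+Bu+Ew,Pv)\in\iC$, hence $Ax+Bu+EW\subseteq\proj{\iC}$. Your explicit check that $\proj{\iC}\subseteq\proj{S_{xu}}$ is a slightly more careful rendering of what the paper leaves implicit, but the argument is the same.
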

\begin{proof}
  Let $x\in \proj{\iC}$. Then, there exists a $v \in \R^{mq}$ such that \mbox{$(x,v)\in \iC$}. Define $u = Hv$ and pick an arbitrary $w\in W$. By construction of $S_{xv}$, $(x,u)\in S_{xu}$. Since $\iC$ is an RPIS, we have that $(x^+, v^+)=(Ax+Bu+Ew, Pv)\in \iC$ and thus $x^+\in \proj{\iC}$. By Definition \ref{def:cis}, $\proj{\iC}$ is an RCIS of $\Sigma$ in $S_{xu}$. 
\end{proof}

In what follows, we study the conditions on $P$ and $H$ such that the Maximal RPIS of $\Sigma_{xv}$ is represented in closed-form. 

\subsection{Finite reachability constraints}
\label{sec:finreachcons}
By definition of the companion safe set $S_{xv}$ and Definition~\ref{def:pis}, we have that any state $(x,v)$ belongs to the Maximal RPIS of $\Sigma_{xv}$ within $S_{xv}$, if and only if, the input sequence $\{u_{i}\}_{i=0}^{t-1}$, with each input as in \eqref{eq:genPolicy}, satisfies:
\begin{align}
\label{eq:reachCons}
	\left( \reach{x}{\{u_{i}\}_{i=0}^{t-1}}, u_t \right) \subseteq S_{xu}, \ t \geq 0	,
\end{align}
where $\reach{x}{\{u_{i}\}_{i=0}^{t-1}} \subseteq \R^n$, $u_t \in \R^m$, and the pair $\left( \reach{x}{\{u_{i}\}_{i=0}^{t-1}}, u_t \right) \subseteq \R^n\times\R^m$. 
By Theorem \ref{thm:implicit_RCIS2}, the above constraints characterize the states and input sequences within an implicit RCIS of $\Sigma$, such that the pair $(x,u)$ stays inside the safe set $S_{xu}$ indefinitely. 
Notice that \eqref{eq:reachCons} defines an \emph{infinite} number of constraints in general. In this section, we investigate under what conditions we can reduce the above constraints into a \emph{finite} number and compute them explicitly. Then, we use these constraints to construct the promised implicit RCIS.  

\begin{definition}[Eventually periodic behavior]
\label{def:eventuperiodinput}
Consider two integers \mbox{$\tau \in \N \cup \{0\}$} and \mbox{$\lambda \in \N$}. A control input $u_t$ follows an \emph{eventually periodic behavior} if:
\begin{align}
\label{eq:eventuperiodinput}
	u_{t+\lambda} = u_{t}, ~\text{for all $t \geq \tau$}.
\end{align}
We call $\tau$ the \emph{transient} and $\lambda$ the \emph{period}.
\end{definition}

\begin{proposition}[Finite reachability constraints] 
\label{prop:finitereachcons}
	Consider a DTLS $\Sigma$ satisfying Assumption~\ref{asmpt:prbAssumption}. If the input $u_t$ follows an eventually periodic behavior with transient $\tau \in \N\cup\{0\}$ and period $\lambda \in \N$, then the infinite constraints in \eqref{eq:reachCons} are reduced to a finite number of constraints.
\end{proposition}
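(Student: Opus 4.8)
The plan is to leverage the two hypotheses separately: nilpotency of $A$ to truncate the \emph{spatial} dependence of the reachable set, and eventual periodicity of the input to truncate the \emph{temporal} range of the constraints in \eqref{eq:reachCons}.

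First I would simplify $\reach{x}{\{u_i\}_{i=0}^{t-1}}$ for large $t$ using $A^\nu = 0$. In \eqref{eq:reachableset}, as soon as $t \geq \nu$ the term $A^t x$ vanishes, the accumulated disturbance saturates to $\overline{W}_\infty = \overline{W}_\nu$ by \eqref{eq:min_rpis}, and every summand $A^{i-1} B u_{t-i}$ with $i > \nu$ drops out. Thus for all $t \geq \nu$ one has
\[
\reach{x}{\{u_i\}_{i=0}^{t-1}} = \sum_{i=1}^{\nu} A^{i-1} B u_{t-i} + \overline{W}_\infty,
\]
so the reachable set depends on the input history only through the finite window $u_{t-1}, \dots, u_{t-\nu}$, together with the fixed polytope $\overline{W}_\infty$.

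Second I would feed in the eventually periodic behavior \eqref{eq:eventuperiodinput}. For $t \geq \tau + \nu$ every index $t - i$ with $0 \leq i \leq \nu$ satisfies $t - i \geq \tau$, hence $u_{(t+\lambda)-i} = u_{t-i}$. Substituting into the truncated formula above yields $\reach{x}{\{u_i\}_{i=0}^{t+\lambda-1}} = \reach{x}{\{u_i\}_{i=0}^{t-1}}$, while the case $i = 0$ gives $u_{t+\lambda} = u_t$. Therefore the pair appearing in \eqref{eq:reachCons} at time $t+\lambda$ is identical to the one at time $t$, for every $t \geq \tau + \nu$; that is, the family of constraints is periodic in $t$ with period $\lambda$ beyond the threshold $\tau + \nu$. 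Iterating this relation, every constraint with $t \geq \tau + \nu$ coincides with one indexed by $t \in \{\tau+\nu, \dots, \tau+\nu+\lambda-1\}$, so the entire infinite family \eqref{eq:reachCons} is equivalent to its restriction to $t \in \{0, 1, \dots, \tau+\nu+\lambda-1\}$. Since $S_{xu}$ is a polytope and each reachable set is a polytope (a translate of the Minkowski sum of linear images of $W$), each of these finitely many set containments amounts to finitely many affine inequalities, giving a finite constraint system.

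I expect the main obstacle to be the index bookkeeping: specifically, establishing that both the reachable set \emph{and} the current input become $\lambda$-periodic only once the entire window $u_{t-1}, \dots, u_{t-\nu}$ has entered the periodic regime, which forces the threshold to be $\tau + \nu$ rather than merely $\tau$. By contrast, the nilpotency-driven vanishing of $A^t x$ and the saturation of the accumulated disturbance are immediate and require no delicacy.
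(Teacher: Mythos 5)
Your proposal is correct and follows essentially the same route as the paper's proof: nilpotency truncates the reachable set to the window of the last $\nu$ inputs, eventual periodicity then makes the constraint pairs $\lambda$-periodic for $t \geq \nu+\tau$, and the infinite family in \eqref{eq:reachCons} collapses to the $\nu+\tau+\lambda$ constraints indexed by $t \in \{0,\dots,\nu+\tau+\lambda-1\}$. Your careful justification of the threshold $\tau+\nu$ (requiring the whole input window to enter the periodic regime) is exactly the index condition the paper uses implicitly.
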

\begin{proof}
    Under Assumption 1 the matrix $A$ is nilpotent with nilpotency index $\nu$. Consequently, given \eqref{eq:reachableset}, the reachable set from a state $x$ for $t \geq \nu$ depends only on the past $\nu$ inputs. We abuse notation to write $\reachU{\{u_{i}\}_{i=0}^{t-1}}$ and omit the state $x$ to denote dependency only on the inputs. Then, for $t \geq \nu+\tau$: 
	\begin{align*}
        &\reachU{\{u_{i}\}_{i=0}^{t-1}} = \sum^{\nu}_{i=1} A^{i-1} B u_{t-i} + \overline{W}_{\infty} \\ 
		&\overset{\text{\eqref{eq:eventuperiodinput}}}= \sum^{\nu}_{i=1} A^{i-1} B u_{t+\lambda-i} + \overline{W}_{\infty} 
		= \reachU{\{u_{i}\}_{i=0}^{t+\lambda-1}}.
	\end{align*}
	Therefore, under inputs with eventually periodic behavior the reachability constraints repeat themselves after $t = \nu+\tau+\lambda$. As a result, we can split the constraints in \eqref{eq:reachCons} as:
\begin{align}
	\label{eq:Preach1}
	&\left(\reach{x}{\left\{u_i\right\}_{i = 0}^{t-1}}, u_t\right) \subseteq S_{xu}, ~ t = 0, \dots, \nu-1 ,	\\
	\label{eq:Preach3}
	&\left(\reachU{\left\{u_i\right\}_{i = 0}^{t-1}}, u_t\right) \subseteq S_{xu}, ~ t = \nu, \dots, \nu+\tau+\lambda-1.
\end{align}
The above suggests that $\left(\reach{x}{\{u_{i}\}_{i=0}^{t-1}}, u_t\right) \subseteq S_{xu}$ for all $t\geq0$ can be replaced with only $\nu+\tau+\lambda$ constraints.
\end{proof}

Proposition~\ref{prop:finitereachcons} provides a finite representation of the constraints in \eqref{eq:reachCons} under the eventually periodic input behavior in \eqref{eq:eventuperiodinput}. The next question we address concerns characterizing the classes of policies that guarantee the behavior in \eqref{eq:eventuperiodinput}. 

\subsection{Implicit robust controlled invariant sets in closed-form}
\label{sec:iRCISclosedform}
Recall that our goal is to derive a closed-form expression for an implicit RCIS of $\Sigma$, which is essentially the Maximal RPIS of the companion system $\Sigma_{xv}$ by Theorem~\ref{thm:implicit_RCIS2}. So far we proved that, in general, inputs with eventually periodic behavior result in finite reachability constraints. Clearly, the parameterized input in \eqref{eq:genPolicy} follows an eventually periodic behavior as in \eqref{eq:eventuperiodinput} if: 
\begin{align}
\label{eq:PmatProperty}
	P^{t} = P^{t+\lambda}, ~ t \geq \tau, 
\end{align}
i.e., $P$ is an \emph{eventually periodic} matrix with transient $\tau$ and period $\lambda$. 

\begin{proposition}[Structure of eventually periodic matrices]
\label{prop:PmatProperty}
	Any \emph{eventually periodic} matrix $P \in \R^{n \times n}$ has eigenvalues that are either $0$ or $\lambda$-th roots of unity. If $\tau \neq 0$, i.e., $P$ is not purely periodic, then $P$ has at least one $0$ eigenvalue with algebraic multiplicity equal to $\tau$ and geometric multiplicity equal to $1$. If $P^\tau \neq 0$, i.e., $P$ is not nilpotent, then $P$ has at least one eigenvalue that is a $\lambda$-th root of unity. 
\end{proposition}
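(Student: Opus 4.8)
The plan is to extract a single polynomial that annihilates $P$ from the defining relation and then read off the entire spectral structure from its factorization. Eventual periodicity with transient $\tau$ and period $\lambda$ means $P^t = P^{t+\lambda}$ for all $t \geq \tau$; the weakest instance, $t = \tau$, already gives $P^{\tau}(P^{\lambda} - I) = 0$, and multiplying by further powers of $P$ recovers every other instance. Hence $p(x) = x^{\tau}(x^{\lambda} - 1)$ annihilates $P$, so the minimal polynomial $m(x)$ of $P$ divides $p(x)$. Since $x^{\lambda} - 1 = \prod_{k=0}^{\lambda-1}(x - \omega^k)$ with $\omega = e^{2\pi i/\lambda}$ has $\lambda$ distinct roots, the roots of $p$ — and a fortiori the eigenvalues of $P$, which are roots of $m$ — lie in $\{0\} \cup \{\omega^k : 0 \le k \le \lambda-1\}$. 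This immediately yields the first assertion, and moreover each nonzero eigenvalue, being a simple root of $x^{\lambda}-1$ and hence of $m$, is semisimple.

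For the statement on the transient I would take $\tau$ to be the \emph{minimal} transient and isolate the power of $x$ in $m$. Write $m(x) = x^{a} g(x)$ with $g(0) \neq 0$; since $m \mid p$ and $g$ is coprime to $x$, we get $g \mid (x^{\lambda} - 1)$ and $a \le \tau$. The key step is the reverse inequality: because $g \mid (x^{\lambda}-1)$, the polynomial $x^{a}(x^{\lambda}-1)$ is itself a multiple of $m$, so it annihilates $P$, i.e. $P^{a} = P^{a+\lambda}$; minimality of $\tau$ then forces $a \ge \tau$, whence $a = \tau$. Recalling that the exponent of $x$ in the minimal polynomial equals the size of the largest Jordan block attached to the eigenvalue $0$, we conclude that $P$ has a Jordan block of size exactly $\tau$ at $0$ — a block which by itself contributes algebraic multiplicity $\tau$ and geometric multiplicity $1$, which is the content of the second assertion.

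Finally, for the non-nilpotent case I would argue by contraposition: if $P$ had no nonzero eigenvalue it would be nilpotent, its minimal polynomial would be $x^{k}$ for some $k$, and $x^{k} \mid x^{\tau}(x^{\lambda}-1)$ together with the coprimality of $x$ and $x^{\lambda} - 1$ would force $k \le \tau$, giving $P^{\tau} = 0$ and contradicting $P^{\tau} \neq 0$. Thus $P$ possesses a nonzero eigenvalue, which by the first part must be a $\lambda$-th root of unity. I expect the main obstacle to be the second claim: one must be careful to interpret $\tau$ as the minimal transient and must correctly translate the exponent of $x$ in $m(x)$ into the Jordan-block statement about algebraic and geometric multiplicities, rather than into a (false in general) statement about the total algebraic multiplicity of the eigenvalue $0$.
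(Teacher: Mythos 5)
Your proof is correct, and it reaches the conclusion by a genuinely different route than the paper. The paper argues pointwise: it applies $P^t = P^{t+\lambda}$ to an eigenvector to get $\delta^t(1-\delta^\lambda)=0$ for the first claim, and then passes to the Jordan normal form $J = \blk(J_1, J_2)$, asserting that the nilpotent block $J_1$ ``vanishes in exactly $\tau$ steps'' to obtain the size-$\tau$ Jordan block at $0$ and the nonemptiness of $J_2$ when $P^\tau \neq 0$. You instead compress the whole defining relation into the single annihilating polynomial $x^\tau(x^\lambda-1)$ and read everything off the factorization of the minimal polynomial $m$. The two arguments rest on the same structural fact (the exponent of $x$ in $m$ equals the size of the largest Jordan block at $0$), but your packaging has a concrete advantage: the step $a = \tau$, proved via ``$x^a(x^\lambda-1)$ still annihilates $P$, so $a$ is itself a valid transient, so minimality of $\tau$ forces $a \geq \tau$,'' makes explicit the minimality assumption on $\tau$ that the paper's proof uses silently when it claims $J_1^t \neq 0$ for $t < \tau$. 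You also correctly flag that the proposition's phrase ``a $0$ eigenvalue with algebraic multiplicity $\tau$ and geometric multiplicity $1$'' must be read as ``a Jordan block of size $\tau$ at $0$'' rather than as a statement about the total multiplicities of the eigenvalue $0$, which is exactly the reading the paper's own proof adopts. As a small bonus, your observation that the nonzero eigenvalues are semisimple (simple roots of $x^\lambda - 1$, hence of $m$) is what underlies the paper's subsequent Corollary on the block structure with $R^\lambda = \I$.
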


\begin{proof}
Let $\mathrm{v} \neq 0$ be an eigenvector of $P$ and $\delta$ the corresponding eigenvalue, i.e., $P \mathrm{v} = \delta \mathrm{v}$. Then, \eqref{eq:PmatProperty} for $t \geq \tau$ yields:
\begin{align*}
	P^{t} = P^{t+\lambda} &\Rightarrow P^{t} \mathrm{v} = P^{t+\lambda} \mathrm{v} \Leftrightarrow \delta^{t} \mathrm{v} = \delta^{t+\lambda} \mathrm{v} \\	
		&\overset{\mathrm{v}\neq0}\Leftrightarrow  \delta^{t} = \delta^{t+\lambda} \Leftrightarrow  \delta^{t} \left(1 - \delta^{\lambda} \right) = 0	,
\end{align*}
that is, the eigenvalues $\delta$ of $P$ are only $0$ or $\lambda$-th roots of unity. 

Consider now the Jordan normal form $P = M J M^{-1}$ \cite{laub2004matrixanalysis}. This form is unique up to the order of the Jordan blocks, and $P^t = M J^t M^{-1}$. Without loss of generality, we write:
\begin{align*}
J = \begin{bmatrix} J_1 & \mathbb{0} \\ \mathbb{0} & J_2 \end{bmatrix}	,
\end{align*}
where $J_1$ is the Jordan block corresponding to the eigenvalues of $P$ that are $0$, and $J_2$ is the Jordan block corresponding to the eigenvalues of $P$ that are the $\lambda$-th roots of unity. Thus, $J_1$ is nilpotent. Then, when $\tau \neq 0$, 
equality \eqref{eq:PmatProperty} is equivalent to:
\begin{align*}
	P^{t} = P^{t+\lambda} \Leftrightarrow M J^t M^{-1} = M J^{t+\lambda} M^{-1}, ~ t \geq \tau . 
\end{align*}
Matrix $J_1$ vanishes in exactly $\tau$ steps, i.e., \mbox{$J_1^\tau = 0$} and \mbox{$J_1^t \neq 0$}, for $t < \tau$. This implies that $P$ has at least one $0$ eigenvalue with algebraic multiplicity equal to $\tau$ and geometric multiplicity equal to $1$, but no $0$ eigenvalues of geometric multiplicity $1$ and algebraic multiplicity greater than $\tau$.

Moreover, when $P$ is not nilpotent, i.e., $P^\tau \neq \mathbb{0}$, for $t\geq\tau$:
\begin{align*}
	J^t = J^{t+\lambda} 
	&\overset{J_1^t=\0, t\geq\tau}\Leftrightarrow \begin{bmatrix} \0 & \0 \\ \0 & J_2^t \end{bmatrix} = \begin{bmatrix} \0 & \0 \\ \0 & J_2^{t+\lambda} \end{bmatrix}	\\
	&\Leftrightarrow J_2^t = J_2^{t+\lambda}.
\end{align*}
Thus, $P$ has at least one eigenvalue that is a $\lambda$-th root of unity.
\end{proof}

\begin{corollary}
\label{cor:PmatStruct}
The class of matrices described by Proposition \ref{prop:PmatProperty} that satisfies \eqref{eq:PmatProperty} can be written, up to a similarity transformation, in the following form: 
\begin{align}
\label{eq:PmatSol}
	P = \begin{bmatrix} N & Q \\ \mathbb{0} & R \end{bmatrix},
\end{align}
where $N$ is a nilpotent matrix with nilpotency index $\tau$, $R$ is a matrix whose eigenvalues are all $\lambda$-th roots of unity, i.e., $R^\lambda = \I$, and $Q$ is an arbitrary matrix. 
\end{corollary}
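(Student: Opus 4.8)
The plan is to build directly on the Jordan-form analysis already carried out in the proof of Proposition~\ref{prop:PmatProperty}, so that the corollary really is a corollary. There we decomposed, up to the similarity $P = MJM^{-1}$, the matrix as $J = \blk(J_1, J_2)$, where $J_1$ collects the Jordan blocks associated with the eigenvalue $0$ and $J_2$ collects those associated with the $\lambda$-th roots of unity. I would simply set $N := J_1$ and $R := J_2$, so that $P$ is similar to the block-diagonal matrix $\blk(N,R)$. This is exactly the claimed form \eqref{eq:PmatSol} in the special case $Q = \0$, which suffices to prove the stated existence ``up to a similarity transformation.''

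It then remains to verify the two structural properties. For $N$, the proof of Proposition~\ref{prop:PmatProperty} already established that $J_1$ vanishes in exactly $\tau$ steps, i.e. $J_1^{\tau} = \0$ while $J_1^{t} \neq \0$ for $t < \tau$; hence $N = J_1$ is nilpotent with nilpotency index exactly $\tau$. For $R$, I would restrict the defining identity \eqref{eq:PmatProperty} to the $J_2$ block, obtaining $J_2^{t} = J_2^{t+\lambda}$ for $t \geq \tau$. Since every eigenvalue of $J_2$ is a nonzero root of unity, $J_2$ is invertible, so multiplying by $J_2^{-t}$ gives $R^{\lambda} = J_2^{\lambda} = \I$, as required; note this simultaneously forces every Jordan block of $R$ to have size one.

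For completeness I would point out that the characterization is in fact exact, which is what justifies leaving $Q$ arbitrary in \eqref{eq:PmatSol}. Conversely, any block-triangular matrix of that form satisfies \eqref{eq:PmatProperty}: expanding the powers of $P$ yields $P^{t} = \begin{bmatrix} N^{t} & \sum_{k=0}^{t-1} N^{k} Q R^{t-1-k} \\ \0 & R^{t} \end{bmatrix}$, and for $t \geq \tau$ the diagonal blocks satisfy $N^{t} = \0$ and $R^{t} = R^{t+\lambda}$, while in the off-diagonal sum only the terms with $k \leq \tau-1$ survive and each factor $R^{t-1-k}$ is left unchanged when $t$ is replaced by $t+\lambda$ because $R^{\lambda} = \I$. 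Thus the off-diagonal block genuinely plays no role in eventual periodicity.

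The routine calculations above are straightforward; the only genuinely delicate points are the degenerate cases, which I would dispatch briefly. If $\tau = 0$ the matrix is purely periodic and the nilpotent block $N$ is absent (a zero-dimensional block), so $P \sim R$ with $R^{\lambda} = \I$; if $P$ is nilpotent, i.e. $P^{\tau} = \0$, then the root-of-unity block $R$ is absent and $P \sim N$. In both boundary situations the stated form still holds with the corresponding block empty, so no separate argument beyond acknowledging them is needed. (An alternative, self-contained route would replace the appeal to the earlier Jordan decomposition by the primary decomposition theorem: since \eqref{eq:PmatProperty} gives $P^{\tau}(P^{\lambda}-\I) = \0$ and the polynomials $x^{\tau}$ and $x^{\lambda}-1$ are coprime, one gets the $P$-invariant splitting $\R^{n} = \ker P^{\tau} \oplus \ker(P^{\lambda}-\I)$ directly, and reads off $N$ and $R$ as the restrictions of $P$ to the two summands.)
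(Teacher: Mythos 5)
Your proposal is correct and follows essentially the same route the paper intends: the corollary is meant to be read off directly from the Jordan decomposition $J=\blk(J_1,J_2)$ established in the proof of Proposition~\ref{prop:PmatProperty}, with $N=J_1$ and $R=J_2$, which is exactly what you do. Your explicit verification of the converse --- expanding $P^t$ for the block-triangular form and checking that the off-diagonal sum is unaffected by $t\mapsto t+\lambda$ --- is a worthwhile addition that the paper omits, since it is what actually justifies the claim that $Q$ may be arbitrary.
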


Proposition~\ref{prop:PmatProperty} and Corollary~\ref{cor:PmatStruct} guide the designer to effortlessly select matrix $P$ via its eigenvalues or its submatrices.
Moreover, it is reasonable to select the projection matrix $H$ to be \emph{surjective} in order to obtain a non-trivial input in \eqref{eq:genPolicy}. 

We now show that we can compute the desired \emph{closed-form} expression for an implicit RCIS parameterized by collections of \emph{eventually periodic} input sequences. 

\begin{theorem}[Closed-form implicit RCIS]
\label{thm:implicitRCIS}
	Consider a DTLS $\Sigma$ and a safe set $S_{xu}$ for which Assumption~\ref{asmpt:prbAssumption} holds. Select an eventually periodic matrix $P \in \R^{mq \times mq}$ and a surjective projection matrix $H \in \R^{m \times mq}$. An \emph{implicit RCIS} for $\Sigma$ within $S_{xu}$, denoted by $\iC$, is defined by the constraints:
	\begin{align}
	\label{eq:iRCISconditions}
	\begin{split}
		\left( A^t x + \sum^t_{i=1} A^{i-1} B H P^{t-i} v, H P^{t} v \right) &\subseteq S_{xu} - \overline{W}_{t} 
		\times \{0\}, ~  t=0, \dots, \nu-1	,	\\
		\left( \sum^{\nu}_{i=1} A^{i-1} B H P^{t-i} v, H P^{t} v \right) &\subseteq S_{xu} - \overline{W}_{\infty}
		\times \{0\},  ~ t = \nu, \dots, \nu+\tau+\lambda-1	 .
	\end{split}
	\end{align}
	That is, the set $\iC \subset \R^n\times\R^{m q}$:
	\begin{align}
	\label{eq:closedformRCIS}
	&\iC = \left\{ (x, v) \in \R^n \times \R^{m q} \mid  (x,v) \text{ satisfy \eqref{eq:iRCISconditions}} \right\},
	\end{align}
	is computed in closed-form. Moreover, $\iC$ is the Maximal RPIS of the companion dynamical system in~\eqref{eq:companionsystem}.
\end{theorem}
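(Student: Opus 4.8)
The plan is to identify the closed-form set $\iC$ of \eqref{eq:closedformRCIS} with the Maximal RPIS of the companion system $\Sigma_{xv}$ within $S_{xv}$; the implicit-RCIS and closed-form claims will then follow quickly. I would start from the characterization recorded at the opening of Section~\ref{sec:finreachcons}: because the $v$-component of $\Sigma_{xv}$ evolves autonomously as $v_t = P^t v$ and $S_{xv}$ is the pullback of $S_{xu}$ through $H$, a pair $(x,v)$ lies in the Maximal RPIS of $\Sigma_{xv}$ within $S_{xv}$ if and only if the reachability constraints \eqref{eq:reachCons} hold for every $t \geq 0$ with the inputs generated by $u_i = HP^i v$ as in \eqref{eq:genPolicy}. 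Since $P$ is eventually periodic, property \eqref{eq:PmatProperty} gives $u_{t+\lambda} = HP^{t+\lambda}v = HP^t v = u_t$ for all $t \geq \tau$, so this sequence exhibits the eventually periodic behavior of Definition~\ref{def:eventuperiodinput}, and Proposition~\ref{prop:finitereachcons} collapses the infinitely many constraints \eqref{eq:reachCons} into the finite family \eqref{eq:Preach1}--\eqref{eq:Preach3}.

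Next I would make these finite constraints explicit. Substituting the reachable-set formula \eqref{eq:reachableset} together with $u_i = HP^i v$ turns the deterministic part of each reachable set into $A^t x + \sum_{i=1}^{t} A^{i-1}BHP^{t-i}v$ for $t < \nu$ and into $\sum_{i=1}^{\nu} A^{i-1}BHP^{t-i}v$ for $t \geq \nu$. The accumulated-disturbance terms $\overline{W}_t$ and $\overline{W}_\infty$ are then absorbed through the Minkowski difference: an inclusion of the form $(c + \overline{W}_t,\, u) \subseteq S_{xu}$ holds if and only if $(c,u) \in S_{xu} - (\overline{W}_t \times \{0\})$, and likewise with $\overline{W}_\infty$. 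This rewrites \eqref{eq:Preach1}--\eqref{eq:Preach3} as exactly the conditions \eqref{eq:iRCISconditions}, so the set $\iC$ of \eqref{eq:closedformRCIS} coincides with the Maximal RPIS of $\Sigma_{xv}$, which is the last claim of the theorem.

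For the remaining assertions, I would argue as follows. By nilpotency of $A$, each $\overline{W}_t$ and $\overline{W}_\infty$ is a finite Minkowski sum of linear images of the polytope $W$ (cf. \eqref{eq:acc_dist_set}--\eqref{eq:min_rpis}), hence a polytope; $S_{xu}$ is a polytope by Assumption~\ref{asmpt:prbAssumption}; and a Minkowski difference of polytopes is again a polytope. Thus \eqref{eq:iRCISconditions} consists of finitely many inclusions that are affine in $(x,v)$ into polytopes, so $\iC$ is a polytope described by finitely many linear inequalities obtained in a single, non-iterative step -- this is precisely the closed-form assertion. Finally, since $\iC$ is an RPIS of $\Sigma_{xv}$, Theorem~\ref{thm:implicit_RCIS2} immediately yields that $\iC$ is an implicit RCIS of $\Sigma$ within $S_{xu}$.

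I would expect the only delicate point to be the ``if'' direction of the Maximal-RPIS characterization invoked in the first step: one must verify that the set cut out by \emph{all} of the reachability constraints is genuinely robustly positively invariant, rather than merely contained in each finite-horizon safe set. This amounts to a shift-invariance argument -- satisfaction of \eqref{eq:reachCons} for all $t \geq 0$ at $(x,v)$ forces the same at its successor $(Ax+BHv+Ew,\, Pv)$ for every $w \in W$. Since the excerpt already states this characterization, I would lean on it, but it is where the real content of the identification lies.
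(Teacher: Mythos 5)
Your proposal is correct and follows essentially the same route as the paper's proof, which simply invokes the Section~\ref{sec:finreachcons} characterization of the Maximal RPIS via \eqref{eq:reachCons}, Proposition~\ref{prop:finitereachcons} to reduce to finitely many constraints, and Theorem~\ref{thm:implicit_RCIS2} to conclude. You merely fill in details the paper leaves implicit — the substitution of $u_i = HP^i v$ into \eqref{eq:reachableset}, the absorption of $\overline{W}_t$ via the Minkowski difference, the polytopality of the result, and the shift-invariance caveat — all of which are accurate.
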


\begin{proof}
By Proposition~\ref{prop:finitereachcons}, the set $\iC$ defined by  \eqref{eq:iRCISconditions} in closed-form satisfies the constraints in \eqref{eq:reachCons} and, thus, is the Maximal RPIS of the companion system $\Sigma_{xv}$ in $S_{xv}$. Then, by Theorem~\ref{thm:implicit_RCIS2}, $C_{xv}$ is an implicit RCIS of $\Sigma$ in $S_{xu}$. 
\end{proof}

Theorem~\ref{thm:implicitRCIS} provides an implicit RCIS, $\iC$, in closed-form. This set defines pairs of states and finite input sequences such that the state remains in the safe set indefinitely.

\begin{remark}[On the choice of input behavior]
    Notice that the open-loop eventually periodic policy used to parameterize the implicit RCIS is only a means towards its computation in closed-form. In practice, after computing an RCIS, we can use any controller appropriate for the task at hand. This is illustrated in our case studies in Section~\ref{sec:simulations}, where the controller of the system is independent of the RCIS implicit representation. For instance, once an RCIS is available one defines a closed-loop non-periodic and memoryless controller \mbox{$K:\R^n \to \R^m$} for which $Ax+B K(x)$ belongs to the RCIS when $x$ is an element of the RCIS. 
\end{remark}

\begin{corollary}[Computation of explicit RCIS]
\label{thm:explicitRCIS}
By selecting an eventually periodic matrix $P \in \R^{m q \times m q}$ and a projection matrix $H \in \R^{m \times m q}$, one computes an \emph{explicit} RCIS $C_x=\proj{\iC}$ with a single projection step. 
\end{corollary}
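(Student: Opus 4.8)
The plan is to chain together the two representation results already in hand. First I would invoke Theorem~\ref{thm:implicitRCIS}: for the chosen eventually periodic matrix $P \in \R^{mq \times mq}$ and projection matrix $H \in \R^{m \times mq}$, the set $\iC$ defined in closed form by the constraints in~\eqref{eq:iRCISconditions} is the Maximal RPIS of the companion system $\Sigma_{xv}$ within $S_{xv}$, and is therefore an implicit RCIS of $\Sigma$ within $S_{xu}$. No fresh computation is needed here, since $\iC$ is supplied directly by that theorem. Then, by Definition~\ref{def:icis} (equivalently, by Theorem~\ref{thm:implicit_RCIS2}), the projection $\proj{\iC}$ onto the first $n$ coordinates is, by the very meaning of an implicit RCIS, an RCIS of $\Sigma$ within $S_{xu}$. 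Setting $C_x := \proj{\iC}$ thus yields the claimed explicit RCIS.

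The substance of the corollary is the phrase \emph{with a single projection step}, so I would make explicit why one projection suffices. The point is that $\iC \subset \R^n \times \R^{mq}$ is a polytope: each inclusion in~\eqref{eq:iRCISconditions} is a finite family of affine inequalities in the stacked variable $(x,v)$, obtained by writing out the halfspace description of the shifted constraint sets $S_{xu} - \overline{W}_t \times \{0\}$ and $S_{xu} - \overline{W}_{\infty} \times \{0\}$. Since the projection of a polytope onto a coordinate subspace is again a polytope, $C_x = \proj{\iC}$ is a single well-defined polytopic set, computable by one projection operation (e.g.\ Fourier--Motzkin elimination, or vertex enumeration followed by a convex hull), as opposed to the iterative fixed-point recursion underlying the classical Maximal RCIS algorithm. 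This is precisely the ``two moves'' advertised earlier: one closed-form step for $\iC$, one projection step for $C_x$.

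The only matter requiring care is purely a bookkeeping one rather than conceptual: I must confirm that the Minkowski differences $S_{xu} - \overline{W}_t \times \{0\}$ appearing in~\eqref{eq:iRCISconditions} are themselves polytopes, so that $\iC$ genuinely admits a finite halfspace representation amenable to a single projection. This follows from the standard fact that the Minkowski difference of a polytope by a bounded polytope $\overline{W}_t \times \{0\}$ equals the intersection of finitely many translates of $S_{xu}$ (one per vertex of the subtracted polytope) and is hence again a polytope; $\overline{W}_t$ and $\overline{W}_{\infty}$ are bounded polytopes by Assumption~\ref{asmpt:prbAssumption} and nilpotency of $A$. Everything else is immediate from the preceding theorems, so I would keep the argument short, invoking Theorem~\ref{thm:implicitRCIS} and Definition~\ref{def:icis} and deferring the elementary polytope-arithmetic verification to the standard properties of Minkowski operations.
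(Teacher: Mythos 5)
Your proposal is correct and follows exactly the route the paper intends: the corollary is an immediate consequence of Theorem~\ref{thm:implicitRCIS} together with Definition~\ref{def:icis} (equivalently Theorem~\ref{thm:implicit_RCIS2}), which is why the paper states it without a separate proof. Your added bookkeeping that the Minkowski differences keep $\iC$ polytopic, so that one projection suffices, is a reasonable and accurate elaboration of the same argument rather than a different approach.
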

The size of the lifted space leads to a trade-off: on the one hand it can result to larger RCISs, as we detail in the next section, but on the other it requires more effort if the optional projection step is taken. 

\section{A hierarchy of controlled invariant sets}
\label{sec:hierarchy}

Our main result, Theorem~\ref{thm:implicitRCIS}, provides a closed-form expression for an implicit RCIS, $\iC$, with constraints on the state of the system, $x$, and on a finite sequence of inputs, $v$. The resulting sets depend on the choice of the eventually periodic matrix $P$ in~\eqref{eq:linearcontroller} and the projection matrix $H$. 

In this section, we show how to \emph{systematically} construct a sequence of RCISs that form a \emph{hierarchy}, i.e., a non-decreasing sequence. Our goal is to provide a closed-form expression for the implicit RCISs corresponding to this hierarchy. Towards this, we identify special forms of matrices $P$ and $H$. 
\begin{definition}[$\TL$-lasso sequence]
\label{def:lassobehavior}
Consider two integers $\tau \in \N \cup \{0\}$ and $\lambda \in \N$, and let $q = \tau + \lambda$. 
The control input $u$ generated by the dynamical system $\Sigma_C$ in \eqref{eq:linearcontroller} forms a \emph{$(\tau,\lambda)$-lasso sequence} with respect to the inputs $v$, if: 
\begin{align}
\label{eq:lassobehavior}
	\begin{aligned} 
    P = P_{\TL} = \blk \left( \bar{P}, \dots, \bar{P} \right) \in \R^{m q \times m q}, \\ 
	H = H_{\TL} = \blk \left( \bar{H}, \dots, \bar{H} \right) \in \R^{m \times m q}, 
	\end{aligned}
\end{align}
with $m$ blocks each and $\bar{P}$, $\bar{H}$ defined as:
\begin{align}
\label{eq:CPmatrices}
\begin{aligned} 
    \bar{P} &= 
	\begin{bmatrix}
		\0 && \I &&	\\
		0 & \cdots &1& \cdots & 0	
	\end{bmatrix}
	\in \R^{q \times q}
	,
	\\
	\bar{H} &= 
	\
	\begin{bmatrix}
		1 & 0 & \ \dots & 0
	\end{bmatrix}
	\in \R^{1 \times q}
	.
\end{aligned}
\end{align}
In the last row of $\bar{P}$ the $1$ occurs at the $\tau$-th position. It is easy to verify that $P_{\TL}$ in \eqref{eq:lassobehavior} is of the form \eqref{eq:PmatSol}. A $(\tau,\lambda)$-lasso sequence has a transient of $\tau$ inputs followed by periodic inputs with period $\lambda$. 
\end{definition}

We utilize the $\TL$-lasso sequence to formalize a hierarchy of RCISs with a single decision parameter $q$. 
\begin{definition}[Lassos of same length]
\label{def:lassosL}
Select $q \in \N$. Define the set of all pairs $\TL \in \N\cup\{0\} \times \N$ corresponding to lassos of length $q$ as:
\begin{align}
\label{eq:lassosL}
	\Theta_q = \left\{ \TL \in \N\cup\{0\} \times \N \mid \tau+\lambda = q \right\}.
\end{align}
The cardinality of $\Theta_q$ is exactly $q$. 
\end{definition}

The next result provides a way to systematically construct implicit RCISs in closed-form such that the corresponding explicit RCISs form a hierarchy. 
\begin{theorem}[Hierarchy of RCISs]
\label{thm:hierarchy}
	Consider a DTLS $\Sigma$ and a safe set $S_{xu}$ for which Assumption \ref{asmpt:prbAssumption} holds, and select an integer $q \in \N$. Given $q$, the set $\iCmL \subset \R^n \times \R^{m q}$:
	\begin{align}
    \label{eq:implicitRCISL-union}
    	\iCmL &= \bigcup_{\TL \in \Theta_q} \iCTL	,
    \end{align}
	is the implicit RCIS induced by the $q$-level of the hierarchy, where each $\iCTL$ is computed in closed-form in \eqref{eq:closedformRCIS} with $P$ and $H$ as in \eqref{eq:lassobehavior}. In addition, the explicit RCIS:
    \begin{align}
        \label{eq:explicitRCISL-union}
    	\CmL = \proj{\iCmL} = \bigcup_{\TL \in \Theta_q} \proj{\iCTL} = \bigcup_{\TL \in \Theta_q} \CTL	,
    \end{align}
	corresponding to the $q$-level of the hierarchy contains any RCIS lower in the hierarchy, i.e.:
	\begin{align}
	\label{eq:containment}
		\CmL \supseteq \C_{x,\mL'},~\text{for any $q, q' \in \N$ with $q' < q$.}
	\end{align}
\end{theorem}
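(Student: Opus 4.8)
The plan is to prove the three assertions of the statement in order: that $\iCmL$ in \eqref{eq:implicitRCISL-union} is an implicit RCIS, that its projection obeys \eqref{eq:explicitRCISL-union}, and finally the containment \eqref{eq:containment}, which carries the real content.

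First I would dispatch the two structural claims. For every $\TL \in \Theta_q$, the matrices $P_{\TL}$, $H_{\TL}$ of \eqref{eq:lassobehavior} meet the hypotheses of Theorem~\ref{thm:implicitRCIS}: $P_{\TL}$ is eventually periodic of the form \eqref{eq:PmatSol} (noted in Definition~\ref{def:lassobehavior}) and $H_{\TL}$ is surjective. Hence each $\iCTL$ is a closed-form implicit RCIS and each $\CTL = \proj{\iCTL}$ is an RCIS of $\Sigma$ within $S_{xu}$. Since coordinate projection distributes over unions, $\proj{\iCmL} = \bigcup_{\TL \in \Theta_q} \proj{\iCTL} = \bigcup_{\TL \in \Theta_q} \CTL = \CmL$, which is \eqref{eq:explicitRCISL-union}; and by Proposition~\ref{prop:cisunionconvexhull} this union of RCISs is itself an RCIS, so Definition~\ref{def:icis} makes $\iCmL$ an implicit RCIS, giving \eqref{eq:implicitRCISL-union}.

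The containment rests on a purely combinatorial embedding of lasso behaviors. I would first record the characterization, implicit in Theorem~\ref{thm:implicitRCIS}, that $x \in \CTL$ precisely when some $\TL$-lasso input sequence (realized as $u_t = H_{\TL} P_{\TL}^t v$ for a suitable $v$) satisfies the reachability constraints \eqref{eq:reachCons} for all $t \geq 0$; the block form in \eqref{eq:CPmatrices} leaves the first $q$ inputs $u_0, \dots, u_{q-1}$ free and fixes the remainder by periodicity, so $v$ sweeps out exactly the $\TL$-lasso sequences. The decisive observation is that a sequence satisfying the eventual-periodicity condition \eqref{eq:eventuperiodinput} with transient $\tau'$ and period $\lambda'$ also satisfies it with transient $\tau' + j$ and the same period $\lambda'$ for every $j \geq 0$, since \eqref{eq:eventuperiodinput} holding on $\{t \geq \tau'\}$ a fortiori holds on the subset $\{t \geq \tau' + j\}$.

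Now fix $q' < q$ and any $(\tau', \lambda') \in \Theta_{q'}$, and set $\tau = \tau' + (q - q')$, $\lambda = \lambda'$. A short check gives $(\tau, \lambda) \in \Theta_q$, as $\tau + \lambda = (\tau' + \lambda') + (q - q') = q$ with $\tau \geq 1$ and $\lambda \geq 1$. Any $x \in \C_{x, (\tau', \lambda')}$ admits a $(\tau', \lambda')$-lasso sequence satisfying \eqref{eq:reachCons}; the same sequence is, by the embedding, a $(\tau, \lambda)$-lasso sequence, and because \eqref{eq:reachCons} depends on the sequence only through its realized values (via \eqref{eq:reachableset}) it continues to hold, certifying $x \in \CTL \subseteq \CmL$. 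Thus $\C_{x, (\tau', \lambda')} \subseteq \CmL$, and taking the union over $(\tau', \lambda') \in \Theta_{q'}$ yields $\C_{x, \mL'} \subseteq \CmL$, which is \eqref{eq:containment}. I expect the main obstacle to lie in pinning down this characterization of $\CTL$ rigorously — specifically, verifying that the parameter $v$ realizes every $\TL$-lasso sequence and that membership is governed by the infinite constraint family \eqref{eq:reachCons} rather than by any $(\tau,\lambda)$-specific bookkeeping — whereas the arithmetic on $\Theta_q$ and the distributivity of projection are routine.
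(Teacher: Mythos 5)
Your proof is correct and follows essentially the same route as the paper's: the structural claims are dispatched via Theorem~\ref{thm:implicitRCIS} and Proposition~\ref{prop:cisunionconvexhull}, and the containment rests on the same key observation that a $(\tau',\lambda')$-lasso sequence is also a $(\tau'+j,\lambda')$-lasso sequence. The only cosmetic difference is that you map $\Theta_{q'}$ into $\Theta_q$ directly by adding $q-q'$ to the transient, whereas the paper proves the step $q\to q+1$ and invokes induction.
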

\begin{proof}
First, the sets $\iCmL$ and $\CmL$ are implicit and explicit RCISs respectively as the unions of, implicit and explicit, RCISs by Proposition~\ref{prop:cisunionconvexhull}. Next we prove \eqref{eq:containment} for the case of $q$ and $q+1$, while the more general statement follows by a simple induction argument. 
	
For any $\lambda \in \N$ such that $\TL \in \Theta_q$, we have by \eqref{eq:lassosL} that $(\tau+1,\lambda) \in \Theta_{q+1}$. It is easy to conceptualize that:
\begin{align}
\label{eq:projContain}
	\mathcal{C}_{x,(\tau+1, \lambda)} \supseteq \CTL, 
\end{align}
as a $\TL$-lasso sequence can always be embedded in a \mbox{$(\tau+1,\lambda)$}-lasso sequence. 
From \eqref{eq:explicitRCISL-union} it now follows that: 
\begin{align*}
	\mathcal{C}_{x,(\mL+1)} &= \bigcup_{\TL \in \Theta_{q+1}} \CTL 
	=\left( \bigcup_{\TL \in \Theta_q} \mathcal{C}_{x,(\tau+1,\lambda)} \right) \bigcup \mathcal{C}_{x,(0,q+1)}	\\
	&\overset{\eqref{eq:projContain}}\supseteq \left( \bigcup_{\TL \in \Theta_q} \CTL \right) \bigcup \mathcal{C}_{x,(0,q+1)} \overset{\eqref{eq:explicitRCISL-union}}= \CmL \bigcup \mathcal{C}_{x,(0,q+1)}.
\end{align*}
The above entails that $\mathcal{C}_{x,(\mL+1)} \supseteq \CmL$. 
\end{proof}

\begin{corollary}
\label{cor:liftedhierarchy}
    Using the standard big-M formulation, the implicit RCIS $\iCmL$ can be expressed as a projection of a higher-dimensional polytope: 
    \begin{align}
    \label{eq:liftunion}
        \C_{xv\zeta,\mL} = \left\{ (x,v,\zeta) 
        \middle| \sum_{i=1}^q \zeta_i = 1, G_i (x,v) \leq f_i + (1-\zeta_i) M \mathbb{1} \right\},
    \end{align}
    where \mbox{$\zeta \in \{0,1\}^q$}, $G_i$ and $f_i$ describe each of the $q$ polytopes $\iCTL$ in~\eqref{eq:implicitRCISL-union}, and $M\in\R_{+}$ is sufficiently large. 
    The set $\C_{xv\zeta,\mL}$ is a polytope in $\R^n\times\R^{m q}\times\{0,1\}^q$, and its projection on $\R^n\times\R^{m q}$ is exactly the union 
in~\eqref{eq:implicitRCISL-union}, while its projection on $\R^n$ is exactly 
the explicit RCIS in~\eqref{eq:explicitRCISL-union}. 
\end{corollary}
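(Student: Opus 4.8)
The plan is to treat \eqref{eq:liftunion} as the textbook big-M encoding of the disjunction $\bigcup_{\TL \in \Theta_q} \iCTL$, so that the corollary reduces to two projection identities, with the existence of a finite $M$ as the only delicate point. I would first record that every constraint defining $\C_{xv\zeta,\mL}$ is affine in the joint variable $(x,v,\zeta)$: the equality $\sum_{i=1}^q \zeta_i = 1$ and each block $G_i(x,v)\le f_i + (1-\zeta_i)M\1$ are linear. Fixing any admissible binary assignment $\zeta$ yields an ordinary polytope, and $\C_{xv\zeta,\mL}$ is the finite union of these slices over the valid assignments, which is the sense in which it is a polytope in $\R^n\times\R^{mq}\times\{0,1\}^q$.

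Next I would prove $\pi_{xv}(\C_{xv\zeta,\mL}) = \iCmL$, where $\pi_{xv}$ denotes projection onto $\R^n\times\R^{mq}$. For the inclusion ``$\subseteq$'', any $(x,v,\zeta)\in\C_{xv\zeta,\mL}$ has $\zeta\in\{0,1\}^q$ with $\sum_i\zeta_i=1$, so exactly one coordinate $\zeta_j$ equals $1$; for that index $(1-\zeta_j)M\1$ vanishes and the active block reads $G_j(x,v)\le f_j$, placing $(x,v)$ in the corresponding $\iCTL$ and hence in $\iCmL$ by \eqref{eq:implicitRCISL-union}. For ``$\supseteq$'', any $(x,v)\in\iCmL$ lies in some $\iCTL$, i.e. $G_j(x,v)\le f_j$ for the matching index $j$; setting $\zeta_j=1$ and $\zeta_i=0$ otherwise satisfies $\sum_i\zeta_i=1$ and the $j$-th block, and it remains only to check the relaxed blocks $G_i(x,v)\le f_i+M\1$ for $i\neq j$.

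The existence of a single finite $M$ deactivating all deselected blocks simultaneously over the whole union is the main obstacle, and I would settle it through boundedness of the $\iCTL$. By Assumption~\ref{asmpt:prbAssumption} and Definition~\ref{def:polyhedron} the safe set $S_{xu}$ is bounded; in the $\TL$-lasso parameterization the input terms $H P^{t} v$ appearing in \eqref{eq:iRCISconditions} for $t=0,\dots,\nu+\tau+\lambda-1$ recover every coordinate of $v$ (the lasso shift $\bar P$ together with $\bar H$ reads off the successive entries of each block of $v$, and the range covers $t=0,\dots,q-1$ since $\nu\ge 1$), while the accompanying state terms pin down $x$. Since $S_{xu}$ bounds each pair $(x_t,u_t)$, every coordinate of $(x,v)$ is bounded on $\iCTL$, so $\iCmL$ lies in a common bounded region. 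Consequently each entry of $G_i(x,v)-f_i$ admits a finite upper bound over $\iCmL$, and any $M$ above this bound renders the relaxed blocks vacuous, completing ``$\supseteq$'' and hence the equality.

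Finally I would obtain the $\R^n$ statement by composition of projections: projecting $\C_{xv\zeta,\mL}$ onto the first $n$ coordinates can be performed by first discarding $\zeta$ and then $v$, so that $\proj{\C_{xv\zeta,\mL}} = \proj{\pi_{xv}(\C_{xv\zeta,\mL})} = \proj{\iCmL}$, which equals $\CmL$ by \eqref{eq:explicitRCISL-union}. Here $\proj{\cdot}=\pi_n(\cdot)$ is the projection onto the first $n$ coordinates, and this closes the last claim of the corollary.
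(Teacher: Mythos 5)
Your proposal is correct and follows exactly the standard big-M lifting argument that the paper invokes without proof when stating this corollary. The only substantive point beyond the routine projection bookkeeping is your verification that a finite $M$ exists, which you correctly reduce to boundedness of $\iCmL$ via the boundedness of $S_{xu}$ and the fact that the $\TL$-lasso constraints for $t=0,\dots,q-1$ read off every coordinate of $(x,v)$; this is a worthwhile detail the paper leaves implicit.
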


Theorem~\ref{thm:hierarchy} defines the promised hierarchy and provides an implicit RCIS for each level of the hierarchy that can also be computed in closed-form in~\eqref{eq:liftunion} at the cost of an additional lift. Fig.~\ref{fig:hierarchy_motivation} illustrates the relation in \eqref{eq:containment}, that is, how the sets induced by each hierarchy level contain the ones induced by lower hierarchy levels. 

\begin{remark}[Convex hierarchy]
We can replace the union in~\eqref{eq:implicitRCISL-union} by the convex hull \mbox{$\conv\left( \bigcup_{\TL \in \Theta_q} \iCTL \right)$}. Then, in an analogous manner, all the above results go through resulting in a \emph{hierarchy of convex RCISs}. Similarly to \eqref{eq:liftunion}, by standard set-lifting techniques, one obtains a closed-form expression for the convex hull. 
\end{remark}

\begin{remark}[Partial hierarchies without union]
\label{rem:clarification}
    The proposed hierarchy involves handling a union of sets. However, one might prefer to avoid unions of sets and rather use a single convex set. As each implicit RCIS $\iCTL$ involved in the hierarchy is computed in closed-form by Theorem~\ref{thm:implicitRCIS}, we provide two more refined guidelines for obtaining larger RCISs, based on the choice of $\TL$: 
    \begin{enumerate}
        \item Given any $\lambda\in\N$, it holds that $\C_{x,(\tau+1,\lambda)} \supseteq \CTL$ for any \mbox{$\tau \in \N\cup\{0\}$}. 
        \item Given any $\tau \in \N\cup\{0\}$, it holds that $\CTL \supseteq \C_{x,(\tau,\lambda')}$ for any $\lambda, \lambda' \in \N$ such that $\lambda = k \lambda'$, $k\in\N$, i.e., $\lambda$ is a multiple of $\lambda'$, see \cite[Section 4.6]{anevlavis2020simple} when $\tau=0$.
    \end{enumerate}
    The above can direct the designer in search of larger RCISs that are based on a single implicit RCIS. 
\end{remark}

\begin{figure}[t]
	\includegraphics[width=0.75\textwidth]{./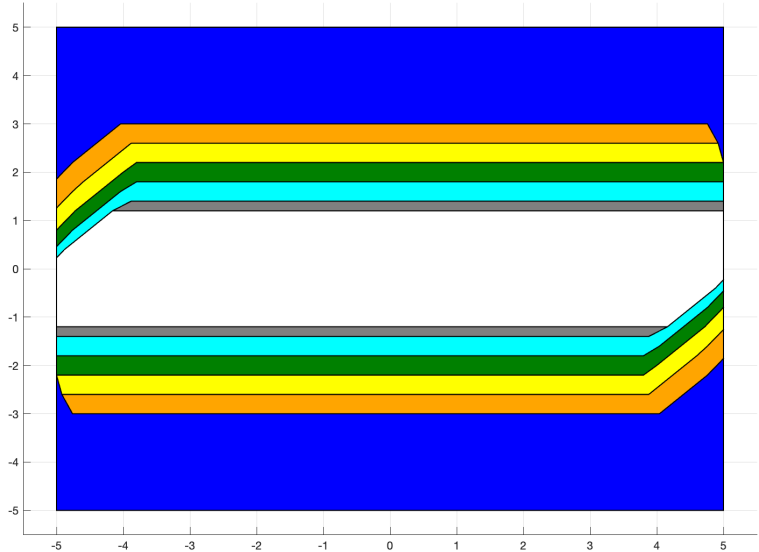}
	\caption{RCIS corresponding to $q=1$ (white), $q=2$ (gray), $q=3$ (teal), $q=4$ (green), $q=5$ (yellow), and $q=6$ (orange) for a double integrator. Safe set in blue.}
	\label{fig:hierarchy_motivation}
\end{figure}

\section{Implicit invariant sets in practice: controlled invariant sets in one move}
\label{sec:onemove}
Using the proposed results, one has the option to project the implicit RCIS back to the original space and obtain an explicit RCIS as proposed in the two-move approach \cite{anevlavis2019cis2m,anevlavis2020simple,anevlavis2021enhanced}. However, the required projection from a higher dimensional space becomes the bottleneck of this approach.

One of the goals of this manuscript is to establish that in a number of key control problems explicit knowledge of the RCIS is not required and the implicit RCIS suffices. We show how the proposed methodology can be used \emph{online} as the implicit RCIS which admits a closed-form expression. 

\subsection{Extraction of admissible inputs}
For many applications in this section, we need to extract a set of admissible inputs of the RCIS $\pi_{n}(C_{xv})$ at a given state $x$, i.e, $\mathcal{A}(x, \pi_{n}(C_{xv}), S_{xu})$ as given in Definition~\ref{def:admissible}. Given only the implicit RCIS $C_{xv}$, we provide here three linear encodings of $\mathcal{A}(x, \pi_{n}(\mathcal{C}_{xv}), S_{xu})$ or its nonempty subsets.

1) The first linear encoding of $\mathcal{A}(x, \pi_{n}(\mathcal{C}_{xv}), S_{xu})$ is given by the polytope: 
\begin{align}
    \mathcal{U}_{1}(x) = \big\{ &(u,v_{1:N})\in \R^{(1+Nq)m} \,\big\vert\, (x,u)\in S_{xu}, (Ax+Bu+Ew_{i}, v_{i})\in \mathcal{C}_{xv}, \forall i\in [N] \big\},
\end{align}
where $v_{{1:N}}$ denotes the vector $(v_{1}, v_{2}, \cdots, v_{N})$. It follows that \mbox{$\pi_{m}(\mathcal{U}_{1}(x)) = \mathcal{A}(x,\pi_{n}(C_{xv}), S_{xu})$}. 

2) The second linear encoding is: 
\begin{align}
   \mathcal{U}_{2}(x) = \big\{v\in \R^{qm} \,\big\vert\, (x,v)\in \mathcal{C}_{xv}\big\},
\end{align}
with $H$ and $P$ as in \eqref{eq:linearcontroller}. Note that $\mathcal{U}_{2}(x)$ is the slice of $\mathcal{C}_{xv}$ at $x$ and is nonempty for $x\in \pi_{n}(\mathcal{C}_{xv})$. Then, the linear transformation $H\mathcal{U}_{2}(x)$ is a nonempty subset of $\mathcal{A}(x, \pi_{n}(\mathcal{C}_{xv}), S_{xu})$. 

3) Finally, define the polytope: 
\begin{align}
    \mathcal{U}_{3}(x) = \big\{ &(u,v)\in \R^{(1+q)m} \,\big\vert\, (x,u)\in S_{xu}, (Ax+Bu+Ew_{i}, v)\in \mathcal{C}_{xv}, \forall i\in [N] \big\},
\end{align}
where $w_{i}\in \mathcal{V}$ with $\mathcal{V}$ the vertices of $W$. It follows that $\pi_{m}(\mathcal{U}_{3}(x)) \subseteq \mathcal{A}(x,\pi_{n}(\mathcal{C}_{xv}), S_{xu})$. It is easy to check that $(Hv,Pv)\in \mathcal{U}_{3}(x)$ for all $v\in \mathcal{U}_{2}(x)$, which implies that $\mathcal{U}_{3}(x)$ is guaranteed to be nonempty for any $x\in \pi_{n}(\mathcal{C}_{xv})$. 

All three linear encodings are easily computed online given $\mathcal{C}_{xv}$. Moreover, it holds that: 
\begin{align*}
   H\mathcal{U}_{2}(x) \subseteq \pi_{m}(\mathcal{U}_{3}(x)) \subseteq \pi_{m}(\mathcal{U}_{1})(x) = \mathcal{A}(x,\pi_n(\mathcal{C}_{xv}), S_{xu}). 
\end{align*}
That is, $\mathcal{U}_{2}(x)$ is the most conservative encoding, while $\mathcal{U}_{1}(x)$ is the least conservative one. However, $\mathcal{U}_{2}$ is of lower dimension, while $\mathcal{U}_{1}$ has the highest dimension. More conservative encodings are easier to compute. Depending on the available compute, a user can select the most appropriate encoding. 

\subsection{Supervision of a nominal controller}
\label{subsec:supervision}
In many scenarios, when synthesizing a controller for a plant, the objective is to meet a performance criterion while always satisfying a safety requirement. This gives rise to the problem of \emph{supervision}. 

\begin{problem}[Supervisory Control]
\label{prb:supervision}
    Consider a system $\Sigma$, a safe set $S_{xu}$, and a nominal controller that meets a performance objective. 
    The supervisory control problem asks at each time step to evaluate if, 
    given the current state, the input $\tilde{u}$ from the nominal controller keeps the next state of $\Sigma$ in the safe set. 
    If not, \emph{correct} $\tilde{u}$ by selecting an input that does so. 
\end{problem}

To solve Problem~\ref{prb:supervision} one has to guarantee \emph{at every step} that the pairs of states and inputs respect the safe set $S_{xu}$. A natural way to do so is by using an RCIS. The supervision framework operates as follows. Given an RCIS $\C$, assume that the initial state of $\Sigma$ lies in $\C$. The nominal controller provides an input $\tilde{u}$ to be executed by $\Sigma$. If $\tilde{u} \in \mathcal{A}(x, \C, S _{xu})$, then its execution is allowed. Else $\tilde{u}$ is corrected by selecting an input $u_{safe} \in \mathcal{A}(x, \C, S_{xu})$. Existence of $u_{safe}$ is guaranteed in any RCIS by Definition~\ref{def:cis}. 

In practice an explicit RCIS is not needed. One can exploit the three linear encodings of admissible inputs from the proposed implicit RCISs to perform supervision.  Furthermore, the nominal controller can be designed independently of the implicit RCIS parameterization.
Consider an implicit RCIS $\iC$ for $\Sigma$ within $S_{xu}$, as in Theorem~\ref{thm:implicitRCIS}. Then supervision of an input $\tilde{u}$ is performed by solving the following QP:
    \begin{align}
    \label{eq:supervisionQP}
        \begin{aligned}
		&\min_{u,v}  &||u - \tilde{u} ||_2^2  \hspace{42.5mm}  \\
        &\text{s.t.}    & (x,u)\in S_{xu} \hspace{37.5mm}  \\
		& &  (Ax+Bu+Ew, v)\in \mathcal{C}_{xv}, \forall w\in W \hspace{2mm} 
        \end{aligned}
    \end{align}
    Notice that the feasible domain of the QP in \eqref{eq:supervisionQP} is equal to the third linear encoding $\mathcal{U}_{3}(x)$ of admissible inputs; similar QPs are easily formulated with the feasible domain being $\mathcal{U}_{1}(x)$ or $\mathcal{U}_{2}(x)$. By solving optimization problem \eqref{eq:supervisionQP} we compute the \emph{minimally intrusive safe input}. 

\subsection{Safe online planning}
\label{subsec:planning}
Based on the discussed supervision framework, we utilize the proposed implicit RCIS to enforce safety constraints in online planning tasks. The goal here is to navigate a robot through unknown environments without collision with any obstacles. The map is initially unknown, and it is built and updated online based on sensor measurements, such as LiDAR. The robot must only operate in the detected obstacle-free region. To ensure this, given a path planning algorithm and a tracking controller, we supervise the controller inputs based on the implicit RCIS. The overall framework is shown in Figure~\ref{fig:control_framework}. 

\begin{figure}[t!]
	\centering
	\includegraphics[width=0.35\textwidth]{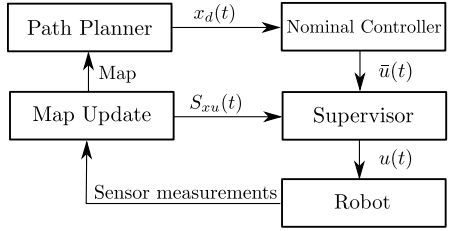}
	\caption{The overall safe online planning framework.}
	\label{fig:control_framework}
\end{figure}
The safe set for the robot imposes bounds on states and inputs, which do not change over time, and also constraints, e.g., on the robot's position, which are given by the obstacle-free region in the current map. As the detected obstacle-free region expands over time, the corresponding part of the safe set does as well. Thus, differently from Section~\ref{subsec:supervision}, we have a time-varying safe set $S_{xu}(t)$ satisfying \mbox{$S_{xu}(t) \subseteq S_{xu}(t+1)$}, $t\geq 0$. As the implicit RCIS is constructed in closed-form, we can generate a new implicit RCIS $\C_{xv}(t)$ for each $S_{xu}(t)$. Then, at each time step $t$, for any $t' \leq t$, we supervise the nominal input $\tilde{u}(t)$ by solving the optimization problem:
\begin{align*}
        \mathcal{P}(t,t') :
        \begin{aligned}
        \min_{u,v}  ~~&|| u - \tilde{u} ||_2^2  \hspace{19mm} \\
        \text{s.t.} ~~      &(x,u)\in S_{xu}(t)\\
        &(Ax+Bu+Ew, v) \in \iC(t'), \forall w\in W.	
        \end{aligned}
\end{align*}
As $S_{xu}(t) \subseteq S_{xu}(t+1)$, $\C_{xv}(t')$ is a valid implicit RCIS in $S_{xu}(t)$ for all $t \geq t'$. Thus, as long as $\mathcal{P}(t,t')$ is feasible, the optimizer $v^*$ of $\mathcal{P}(t,t')$ is a safe input that guarantees the next state lies in the RCIS. Furthermore, if $P(t,t')$ is feasible, by definition of RCIS, $P(t+1,t')$ is also feasible. Thus, if $P(0,0)$ is feasible, for all $t > 0$, there exists $t' \leq t$ such that $P(t,t')$ is feasible. That is, the recursive feasibility of $P(t,t')$ is guaranteed. 
In practice, to take advantage of the latest map, we always select $t'$ to be the latest time instant $t^*$ for which $P(t,t^*)$ is feasible. 

To summarize, at each time step, we first construct the implicit RCIS $\C_{xv}(t)$ based on the current map. Then, given the state and nominal control input, we solve $\mathcal{P}(t,t^*)$ to obtain the minimally intrusive safe input. This input guarantees that the state of the robot stays within $S_{xu}(t)$ for all $t\geq 0$, provided that $\mathcal{P}(0,0)$ is feasible.

\subsection{Safe hyper-boxes}
\label{subsec:safeboxes}
For high dimensional systems, the exact representation of an RCIS $\Cx$ can be a set of thousands of linear inequalities. This reduces insight as it is quite difficult to clearly identify regions of each state that lie within the RCIS. In contrast, hyper-boxes are easy to grasp in any dimension and immediately provide information about the regions of states they contain. Based on this, we explore how implicit RCISs can be used to find hyper-boxes that can be considered \emph{safe} in the following sense. 

\begin{definition}[Safe hyper-boxes]
\label{def:safebox}
Consider a system $\Sigma$, a safe set $S_x$, and the Maximal RCIS $\C_{max} \subseteq S_x$. Define a hyper-box \mbox{$\mB = [\underline{b}_1, \overline{b}_1] \times \dots \times  [\underline{b}_n, \overline{b}_n] =  [\underline{b}, \overline{b}] \subset \R^n$}. We call a hyper-box $\mB$ \emph{safe} if $\mB \subseteq \C_{max}$. 
\end{definition}

To simplify the presentation we only consider state constraints, $S_x$, instead of $S_{xu}$. Notice that by Definition~\ref{def:safebox}, a safe hyper-box is not necessarily invariant. A safe hyper-box $\mB$ entails the guarantee that the trajectory starting therein can remain in $S_x$ forever, but not necessarily within $\mB$. We now aim to address the following problem. 

\begin{problem}
\label{pr:largestsafebox}
	Find the largest{\footnote{The largest, as measured by volume, hyper-box within a set might not be unique. We choose a heuristic for maximizing the volume of a set that yields a well-defined convex optimization problem. Hence, the term ``largest'' refers to the heuristic used.}} \emph{safe} hyper-box $\mB$ within $\Cx$. 
\end{problem}

A hyper-box $\mB$ can be described by a pair of vectors \mbox{$\left(\underline{b},\overline{b}\right) \in \R^{n} \times \R^n$}. Then, using similar arguments to Section~\ref{sec:fin-rep-cis}, we compute in closed-form an implicit RCIS $\C_\mB$ characterizing all hyper-boxes \mbox{$\left(\underline{b},\overline{b}\right)$} that remain in $S_x$ under eventually periodic inputs. The eventually periodic inputs are given by a vector $v \in \R^{m q}$ with $q = \tau+\lambda$. Then, the set $\C_\mB$ lives in \mbox{$\R^{n} \times \R^n \times \R^{m q}$} and is described by:
\begin{small}
\begin{align*}
\label{eq:closedformCB}
	\begin{split}
	&A^t \left[ \underline{b},\overline{b} \right] + \sum^t_{i=1} A^{i-1} B H P^{t-i} v \subseteq S_x - \overline{W}_{t}, t=0, \dots, \nu-1	,	\\
	&\sum^{\nu}_{i=1} A^{i-1} B H P^{t-i} v \subseteq S_x - \overline{W}_{\infty}, t = \nu, \dots, \nu+q-1	 .
	\end{split}
\end{align*}
\end{small}
The above constraints can all be written as linear inequalities in \mbox{$\left(\underline{b},\overline{b}, v \right) \in \R^{n} \times \R^n \times \R^{m q}$}. Then, the implicit RCIS $\C_\mB$ is a polytope and one solves Problem~\ref{pr:largestsafebox} by the following convex optimization program:
\begin{align*}
\begin{split}
    \max_{\left(\underline{b}, \overline{b}, v \right)} 	&\gamma \left( \overline{b} - \underline{b} \right)	\\
    \text{s.t. } 	\hspace*{2.75mm}&	\left(\underline{b}, \overline{b}, v \right) \in \mathcal{C}_{\mB}, 
\end{split}
\end{align*}
where $\gamma(y) = \left( \Pi_{i = 1}^n y_i \right)^{\frac{1}{n}}$ is the geometric mean function, which is used as a heuristic for the volume of the hyper-box. Function $\gamma$ is concave, and maximizing a concave function can be cast as a convex minimization problem \cite{boyd2004convex}. 

\begin{remark}[Invariant and recurrent hyper-boxes]
\label{rem:invar-recurr-boxes}
Two special cases of the above are \emph{invariant} hyper-boxes, when $\tau=0$, $\lambda=1$, see also \cite{anevlavis2019cis2m}, and \emph{recurrent} hyper-boxes, when $\tau=0$, $\lambda>0$, see also \cite{anevlavis2020simple, anevlavis2021enhanced}.
\end{remark}

A related question to Problem~\ref{pr:largestsafebox} is to evaluate if a proposed hyper-box is safe. This is of interest when evaluating whether the initial condition of a problem or an area around a configuration point $x_c$ where the system is required to operate is safe. If both the above are modeled by hyper-boxes $(\underline{b}, \overline{b})$, we can simply ask whether there exists a $v$, such that $(\underline{b}, \overline{b}, v) \in \mathcal{C}_{\mB}$. Similarly, more complicated questions can be formulated, e.g., to find the largest safe box around a configuration point. 

\begin{remark}[Complexity when using implicit RCISs] 
\label{rem:implicitComplexity}
In this section we showed how several key problems in control are solved without the need of projection and of an explicit RCIS, which results in extremely efficient computations since the implicit RCISs are computed in closed-form. The decision to be made is the size of the lift, i.e., the length of the input sequence. From a computational standpoint, this choice is only limited by how large an optimization problem one affords solving given the application. 
\end{remark}

\section{Performance bound for the proposed method}
\label{sec:perfbound}
Numerical examples, to be presented later, will show that the projection of the proposed implicit RCIS onto the original state-space can coincide with the Maximal RCIS. However, this is not always the case. When there is a gap between our projected set and the Maximal RCIS, one may wonder if that gap is fundamental to our method. In other words, can we arbitrarily approximate the Maximal RCIS with the projection of our implicit RCIS by choosing better $P$ and $H$ matrices? 

In this section we aim to answer the above question and provide insights into the completeness of our method. Given \eqref{eq:min_rpis}, define the nominal DTLS $\overline{\Sigma}$ and the nominal safe set $\overline{S}_{xu}$:
\begin{align}
    &\overline{\Sigma} \ : \ x^{+} = Ax + Bu,    \\
    &\overline{S}_{xu}  = S_{xu} - \overline{W}_{\infty}\times \{0\},
\end{align}
where $A$ and $B$ are the same as in \eqref{eq:dtls}.  Let $\overline{\C}_{\max}$ be the Maximal CIS of the nominal system $ \overline{\Sigma}$ within $ \overline{S}_{xu}$ and define: 
\begin{align} 
\label{eq:C_nominal_pre} 
\begin{aligned}
	\C_{outer,\nu} = \Big\{x \in \R^{n} \,\Big\vert\, &\exists \{u_{i}\}_{i=0}^{\nu-1} \in \R^{m\nu}, \left(\reach{x}{\{u_{i}\}_{i=0}^{t-1}},u_{t}\right) \hspace*{-1mm}\subseteq\hspace*{-1mm} S_{xu}, t = 0,\dots, \nu-1,   \\
	&\reach{x}{\{u_{i}\}_{i=0}^{\nu-1}} \subseteq \overline{\C}_{\max} + \overline{W}_{\infty} \Big\},
\end{aligned}
\end{align}
where $\nu$ is the nilpotency index of $A$.

\begin{proposition}
   $\C_{outer,\nu}$ is an RCIS of $\Sigma$ within $S_{xu}$.
\end{proposition}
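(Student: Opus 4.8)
The plan is to verify the two defining properties of an RCIS from Definition~\ref{def:cis}: the containment $\C_{outer,\nu} \subseteq \proj{S_{xu}}$ and the one-step robust invariance. The containment is immediate, since for $x \in \C_{outer,\nu}$ the constraint at $t=0$ reads $(\reach{x}{\emptyset}, u_0) = (x, u_0) \in S_{xu}$, hence $x \in \proj{S_{xu}}$. The real work is the invariance, and the natural strategy is a shift-and-append argument on the witnessing input sequence.

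Let $x \in \C_{outer,\nu}$ with witness $\{u_i\}_{i=0}^{\nu-1}$, put $u := u_0$ (so $(x,u_0)\in S_{xu}$), fix $w\in W$, and set $x^+ = Ax + Bu_0 + Ew$. I would build a witness $\{u_i'\}_{i=0}^{\nu-1}$ for $x^+$ by shifting, $u_i' = u_{i+1}$ for $i = 0,\dots,\nu-2$, leaving the last input to be chosen. The key elementary identity is the one-step recursion $\reach{X}{\{u_i\}_{i=0}^{t}} = A\,\reach{X}{\{u_i\}_{i=0}^{t-1}} + Bu_t + EW$, together with $\overline{W}_{t+1} = EW + A\overline{W}_t$ and $A^t Ew \in A^t EW$, from which I would derive the containment $\reach{x^+}{\{u_i'\}_{i=0}^{t-1}} \subseteq \reach{x}{\{u_i\}_{i=0}^{t}}$. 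This hands me the safety constraints at $t=0,\dots,\nu-2$ for free, since the right-hand sets paired with $u_{t+1} = u_t'$ already lie in $S_{xu}$ by the witness for $x$.

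The crux is the last two constraints. Using nilpotency ($A^\nu = 0$, $\overline{W}_\nu = \overline{W}_\infty$), a direct computation collapses the $(\nu-1)$-step reachable set of $x^+$ to $\reach{x^+}{\{u_i'\}_{i=0}^{\nu-2}} = y + A^{\nu-1}Ew + \overline{W}_{\nu-1}$, where $y := \sum_{i=1}^{\nu} A^{i-1}Bu_{\nu-i}$ is the state-independent nominal point reached from $x$; and since $A^{\nu-1}Ew + \overline{W}_{\nu-1} \subseteq \overline{W}_\infty$, this set lies in the single tube slice $y + \overline{W}_\infty$. The witness's terminal constraint gives exactly $y + \overline{W}_\infty = \reach{x}{\{u_i\}_{i=0}^{\nu-1}} \subseteq \overline{\C}_{\max} + \overline{W}_\infty$. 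I would then invoke a Minkowski cancellation argument (R\r{a}dstr\"om's lemma) --- valid because $\overline{\C}_{\max}$ is closed and convex, being the maximal CIS inside the polytope $\overline{S}_{xu}$, and $\overline{W}_\infty$ is a compact polytope --- to conclude $y \in \overline{\C}_{\max}$. This is the step I expect to be the main obstacle, as it is where convexity of the nominal maximal CIS and compactness of the minimal RPIS are genuinely used; indeed $y\in\overline{\C}_{\max}$ is both necessary and sufficient for an admissible last input to exist.

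With $y \in \overline{\C}_{\max}$ in hand, the controlled invariance of $\overline{\C}_{\max}$ for the nominal system $\overline{\Sigma}$ within $\overline{S}_{xu}$ supplies an input $\bar u$ with $(y,\bar u)\in \overline{S}_{xu}$ and $Ay + B\bar u \in \overline{\C}_{\max}$; I set $u_{\nu-1}' := \bar u$. Since $(y,\bar u)\in \overline{S}_{xu} = S_{xu} - \overline{W}_\infty\times\{0\}$ means $(y+e,\bar u)\in S_{xu}$ for every $e\in\overline{W}_\infty$, and $\reach{x^+}{\{u_i'\}_{i=0}^{\nu-2}} \subseteq y + \overline{W}_\infty$, the safety constraint at $t=\nu-1$ holds. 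Finally, one more application of the recursion together with $A\overline{W}_\infty + EW = \overline{W}_\infty$ gives $\reach{x^+}{\{u_i'\}_{i=0}^{\nu-1}} = Ay + B\bar u + \overline{W}_\infty \subseteq \overline{\C}_{\max} + \overline{W}_\infty$, which is the terminal constraint. All defining constraints of $\C_{outer,\nu}$ thus hold for $x^+$ with the constructed witness, so $x^+\in\C_{outer,\nu}$; as $w\in W$ was arbitrary, $Ax+Bu_0+EW \subseteq \C_{outer,\nu}$ and the invariance is established.
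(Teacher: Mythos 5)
Your proposal is correct and follows essentially the same route as the paper's own proof: shift the witnessing input sequence, use the order (R\r{a}dstr\"om) cancellation lemma on $y+\overline{W}_\infty\subseteq\overline{\C}_{\max}+\overline{W}_\infty$ to place the nominal point $y=\sum_{i=1}^{\nu}A^{i-1}Bu_{\nu-i}$ in $\overline{\C}_{\max}$, and invoke controlled invariance of $\overline{\C}_{\max}$ within $\overline{S}_{xu}$ to append the final input. Your write-up is, if anything, slightly more explicit than the paper's about the reach-set computations and the convexity/compactness hypotheses needed for the cancellation step.
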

\begin{proof}
In this proof, we use the order cancellation lemma, as a special case of \cite[Thm. 4]{grzybowski2019order}.
\begin{lemma} \label{thm:inv_sum} 
	Let $X, Y \subset \R^n$ be two closed convex sets with $Y$ bounded. A point $x\in \R^{n}$ is in $X $ if and only if $x+Y \subseteq X+Y$. 
\end{lemma}
To prove that $\C_{outer,\nu}$ is an RCIS, we show that for any \mbox{$x_0\in \C_{outer,\nu}$}, there exists $u$ such that $(x_0,u)\in S_{xu}$ and for all $w\in W$, $Ax_0+Bu+Ew \in \C_{outer,\nu}$. By definition of $\C_{outer,\nu}$, there exists a sequence $\{u_{i}\}_{i=0}^{\nu-1}$ that, along with $x_0$, satisfies the conditions in~\eqref{eq:C_nominal_pre}. We aim to show that $u_0$ in $\{u_{i}\}_{i=0}^{\nu-1}$ is a feasible choice for $u$. Given~\eqref{eq:C_nominal_pre}, the reachable set from $x_0$ at time $\nu$ is:
\begin{align*} 
\reach{x_0}{\left\{ u_{i} \right\}_{i=0}^{\nu-1}} = \sum^{\nu-1}_{i=0}A^{\nu-1-i} Bu_{i}+\overline{W}_{\infty} \subseteq \overline{\C}_{\max} + \overline{W}_{\infty},
\end{align*}
with $\overline{W}_{\infty}$ and $\overline{\C}_{\max}$ convex and $\overline{W}_{\infty}$ bounded. By Lemma~\ref{thm:inv_sum} we have that $\sum^{\nu-1}_{i=0}A^{\nu-1-i} Bu_{i} \in \overline{\C}_{\max}$. Since $\overline{\C}_{\max}$ is controlled invariant within $\overline{S}_{xu}$ for the nominal DTLS $\overline{\Sigma}$, there exists $u_{\nu}$ such that:
\begin{small}
\begin{align}
	&\left(\sum^{\nu-1}_{i=0} A^{\nu-1-i}Bu_{i}, u_{\nu} \right)\in  \overline{S}_{xu}, \label{eq:pf_RCIS_0} \\ 
	&A\left( \sum^{\nu-1}_{i=0} A^{\nu-1-i}Bu_{i} \right)+Bu_{\nu} = \sum^{\nu}_{i=1} A^{\nu-1-i}Bu_{i}\in \overline{\C}_{\max}. \nonumber 
\end{align}
\end{small}
Consider any $w \in W$ and define $x_1 = Ax + Bu_0 + Ew$: 
\begin{small}
\begin{align}
\label{eq:pf_RCIS_1} 
\mathcal{R}_{\Sigma}(x_1,\{u_{i}\}_{i=1}^{\nu}) =\sum^{\nu}_{i=1} A^{\nu-1-i}Bu_{i}+ \overline{W}_{\infty} \subseteq\overline{\C}_{\max} + \overline{W}_{\infty}. 
\end{align}
\end{small}
From \eqref{eq:pf_RCIS_0} we have that:
\begin{align} \label{eq:pf_RCIS_3} 
	(\mathcal{R}_{\Sigma}(x_1, \{u_{i}\}_{i=1}^{\nu-1}),u_{\nu})  \subseteq S_{xu}.
\end{align}
Finally, note that for $t=0, \cdots, \nu-2$, we have:
\begin{align} \label{eq:pf_RCIS_2} 
	\left(\mathcal{R}_{\Sigma}(x_1, \{u_{i}\}_{i=1}^{t}),u_{t+1}\right) \subseteq \left(\mathcal{R}_{\Sigma}(x_0, \{u_{i}\}_{i=0}^{t}),u_{t+1}\right) \subseteq S_{xu}.
\end{align}
From \eqref{eq:pf_RCIS_1}, \eqref{eq:pf_RCIS_3}, and \eqref{eq:pf_RCIS_2} we verify that $x_1\in \C_{outer,\nu}.$ Thus, $\C_{outer,\nu}$ is an RCIS.
\end{proof}

The following theorem shows that $\C_{outer,\nu}$ is an outer bound of the projection of the proposed implicit RCIS.
\begin{theorem}[Outer bound on $\proj{\iC}$] 
\label{thm:outer} 
    For a companion system $\Sigma_{xv}$ as in \eqref{eq:companionsystem}, with arbitrary matrices $P$ and $H$, let $\iC$ be an RPIS of $\Sigma_{xv}$ within the companion safe set $S_{xv}$. The RCIS $\proj{\iC}$ is a subset of $\C_{outer,\nu}$, that is \mbox{$\proj{\iC} \subseteq \C_{outer,\nu}$}. 
\end{theorem}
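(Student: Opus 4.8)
The plan is to take an arbitrary $x \in \proj{\iC}$ and build, from a witness $v$ with $(x,v)\in\iC$, exactly the input sequence and terminal certificate demanded by the definition of $\C_{outer,\nu}$ in \eqref{eq:C_nominal_pre}. Fix such a $v$ and set $u_t = H P^{t} v$, the open-loop inputs that the companion controller $\Sigma_C$ generates from $v_0 = v$. Since $P$ and $H$ are arbitrary here, $\{u_t\}$ need not be eventually periodic; the argument will use only the invariance of $\iC$ together with the nilpotency of $A$.

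First I would show that invariance of $\iC$ forces all the reachability constraints. As $\iC$ is an RPIS of $\Sigma_{xv}$, a direct induction gives that for every $t\ge 0$ and every disturbance realization the companion state $(x_t, P^{t} v)$ remains in $\iC \subseteq S_{xv}$. Letting the disturbances range over $W^{t}$, the first coordinate $x_t$ sweeps out precisely $\reach{x}{\{u_i\}_{i=0}^{t-1}}$, while $(x_t, H P^{t} v) = (x_t, u_t) \in S_{xu}$ by the definition of $S_{xv}$. Hence $\left(\reach{x}{\{u_i\}_{i=0}^{t-1}}, u_t\right) \subseteq S_{xu}$ for all $t\ge 0$, which in particular supplies the constraints for $t = 0,\dots,\nu-1$ required in \eqref{eq:C_nominal_pre}.

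The crux is the terminal condition $\reach{x}{\{u_i\}_{i=0}^{\nu-1}} \subseteq \overline{\C}_{\max} + \overline{W}_{\infty}$. By nilpotency, $A^{\nu}=0$ and $\overline{W}_{\nu} = \overline{W}_{\infty}$, so for $t\ge\nu$ the reachable set collapses to $y_t + \overline{W}_{\infty}$ with $y_t = \sum_{i=1}^{\nu} A^{i-1} B u_{t-i}$ (recall $A^{t}x = 0$ for $t\ge\nu$). A short computation, again using $A^{\nu}=0$, shows $y_{t+1} = A y_t + B u_t$, so $\{y_t\}_{t\ge\nu}$ is a genuine trajectory of the nominal system $\overline{\Sigma}$ driven by $\{u_t\}_{t\ge\nu}$, starting from the point $y_\nu = \sum_{i=1}^{\nu} A^{i-1} B u_{\nu-i}$, which satisfies $\reach{x}{\{u_i\}_{i=0}^{\nu-1}} = y_\nu + \overline{W}_{\infty}$. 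Rewriting the constraints from the previous step as $(y_t, u_t) \in S_{xu} - \overline{W}_{\infty}\times\{0\} = \overline{S}_{xu}$ for all $t\ge\nu$ says exactly that this nominal trajectory stays in $\overline{S}_{xu}$ forever. Since $\overline{\Sigma}$ is disturbance-free, admitting such an infinite feasible input sequence is precisely the membership criterion for the Maximal CIS, so $y_\nu \in \overline{\C}_{\max}$.

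Finally I would remove the disturbance term: applying Lemma~\ref{thm:inv_sum} with $X = \overline{\C}_{\max}$ and $Y = \overline{W}_{\infty}$ (both convex, $Y$ bounded) upgrades $y_\nu \in \overline{\C}_{\max}$ to $y_\nu + \overline{W}_{\infty} \subseteq \overline{\C}_{\max} + \overline{W}_{\infty}$, i.e. the terminal condition. Combined with the first $\nu$ constraints, this certifies $x \in \C_{outer,\nu}$ and hence $\proj{\iC}\subseteq\C_{outer,\nu}$. I expect the third paragraph to be the main obstacle: the decisive insight is recognizing that the tail of the lifted trajectory is an honest nominal trajectory and invoking the open-loop characterization of the Maximal CIS of the disturbance-free system, after which the order-cancellation step is a clean application of the stated lemma.
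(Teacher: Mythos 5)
Your proposal is correct and follows essentially the same route as the paper: derive the infinite reachability constraints from the RPIS property with $u_t = HP^t v$, observe via nilpotency that the tail $y_t = \sum_{i=1}^{\nu}A^{i-1}Bu_{t-i}$ is a trajectory of the nominal system $\overline{\Sigma}$ confined to $\overline{S}_{xu}$ forever (hence $y_\nu\in\overline{\C}_{\max}$), and invoke the order-cancellation lemma to convert this into the terminal condition of \eqref{eq:C_nominal_pre}. The only cosmetic difference is that you spell out the induction establishing the reachability constraints and the recursion $y_{t+1}=Ay_t+Bu_t$, which the paper leaves implicit.
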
 
\begin{proof}
	Let $x\in \proj{\iC}$. We show that $x\in \C_{outer,\nu}$. By definition of $\iC$, there exists a vector $v$ such that:
	\begin{align} \label{eq:R_HP} 
		\left(\mathcal{R}_{\Sigma}\left(x, \left\{ H P^{i}v \right\}_{i=0}^{t-1}\right),H P^{t}v\right) \subseteq S_{xu}, \text{ for all $t \geq 0$}.
	\end{align}
	Define $u_{t} = H P^{t}v$. We want to verify that $x$ and $\{u_{i}\}_{i=0}^{\nu - 1}$ satisfy the two conditions in the definition of~\eqref{eq:C_nominal_pre}. The first condition is immediately satisfied by~\eqref{eq:R_HP}. It is left to show that $ \mathcal{R}_{\Sigma}(x, \{u_{i}\}_{i=0}^{\nu-1}) \subseteq \overline{\C}_{\max}+ \overline{W}_{\infty}$. That is:
	\begin{align*}
	    \sum^{\nu-1}_{i=0} A^{\nu-1-i}Bu_{i} +\overline{W}_{\infty} \subseteq \overline{\C}_{\max}+ \overline{W}_{\infty}. 
	\end{align*}
	By Lemma \ref{thm:inv_sum}, it is equivalent to prove that: 
	\begin{align*}
	   \overline{x}\equiv \sum^{\nu-1}_{i=0} A^{\nu-1-i}Bu_{i} \in\overline{\C}_{\max}.
	\end{align*}
By \eqref{eq:R_HP}, we have that for $t \geq 0$:
\begin{align}
    \begin{aligned}
	&\left( \sum^{\nu-1}_{i=0}A^{\nu-1-i} Bu_{i+t} + \overline{W}_{\infty} ,u_{v+t}\right) \subseteq S_{xu}
	\Leftrightarrow \left( \sum^{\nu-1}_{i=0}A^{\nu-1-i} Bu_{i+t}  ,u_{\nu+t}\right) \in \overline{S}_{xu}\\
		\Leftrightarrow & \left( \mathcal{R}_{\overline{\Sigma}}(\overline{x}, \{u_{i}\}_{i=\nu}^{\nu+t-1}),u_{\nu+t}  \right) \in \overline{S}_{xu}
    \end{aligned}
    \label{eq:x_bar_inv} 	
\end{align}
According to \eqref{eq:x_bar_inv}, the control sequence $ \{u_{i}\}_{i=\nu}^{\nu+t-1}$ guarantees that the trajectory of $ \overline{\Sigma}$ starting at $ \overline{x}$ stays within $ \overline{S}_{xu}$ for all $t \geq 0$. Thus, $ \overline{x}$ must belong to the Maximal CIS of $\Sigma$ in $ \overline{S}_{xu}$. That is, $\overline{x}\in \overline{\C}_{\max}$.
\end{proof}

Note here that the set $\C_{outer,\nu}$, which serves as an outer bound for the set computed by our method, is as hard to compute as the Maximal RCIS. Given Theorem~\ref{thm:outer} we have:
\begin{align}
\label{eq:cis_order}
	\proj{\iC} \subseteq \C_{outer,\nu} \subseteq \C_{\max}.
\end{align}
Thus, the projection of our implicit RCIS can coincide with the Maximal RCIS, for appropriately selected matrices $P$ and $H$, only if $\C_{outer,\nu} = \C_{\max}$ in \eqref{eq:cis_order}. This potential gap between our approximation and the Maximal RCIS is due to the fact that our method uses open-loop forward reachability constraints under disturbances. Finally, the following theorem establishes weak completeness of our method.
\begin{theorem}[Weak completeness]
\label{thm:complete}
    The set $\C_{outer,\nu}$ is nonempty, if and only if, there exist matrices $P$ and $H$ such that the corresponding implicit RCIS $\iC$ is nonempty. Specifically, $\C_{outer,\nu}\neq\emptyset$, if and only if, $\C_{xv,(0,1)}\neq\emptyset$, that is $P$ and $H$ are as in \eqref{eq:lassobehavior} with $\TL=(0,1)$. 
\end{theorem}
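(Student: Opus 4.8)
The plan is to route the entire biconditional through the single witness $(\tau,\lambda)=(0,1)$. For this choice $q=1$, the blocks collapse to $\bar P=\bar H=1$, so $P=H=\I$ (the $m\times m$ identity) and the parameterized input of \eqref{eq:genPolicy} is the constant sequence $u_t=HP^tv=v$. The ``if'' implications are immediate from the outer bound of Theorem~\ref{thm:outer}, while the implication $\C_{outer,\nu}\neq\emptyset\Rightarrow\C_{xv,(0,1)}\neq\emptyset$ carries all the content.

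First I would dispatch the easy direction. If $\C_{xv,(0,1)}\neq\emptyset$, or more generally if there exist \emph{any} $P,H$ with $\iC\neq\emptyset$, then $\proj{\iC}\neq\emptyset$, and Theorem~\ref{thm:outer} gives $\proj{\iC}\subseteq\C_{outer,\nu}$, whence $\C_{outer,\nu}\neq\emptyset$. This settles both ``if'' statements at once, so everything reduces to showing that nonemptiness of $\C_{outer,\nu}$ forces the constant-input set $\C_{xv,(0,1)}$ to be nonempty.

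For the reverse direction I would start from a point $x_0\in\C_{outer,\nu}$. The second condition in \eqref{eq:C_nominal_pre} forces $\overline{\C}_{\max}+\overline{W}_\infty\neq\emptyset$, hence $\overline{\C}_{\max}\neq\emptyset$ (the RCIS proof of $\C_{outer,\nu}$ already exhibits an explicit point of $\overline{\C}_{\max}$). The crux is to extract a \emph{safe equilibrium} of the nominal system, i.e.\ a pair $(x^\star,u^\star)$ with $x^\star=Ax^\star+Bu^\star$ and $(x^\star,u^\star)\in\overline{S}_{xu}$. I would obtain it by using controlled invariance of $\overline{\C}_{\max}$ to build an infinite nominal trajectory $z_0,z_1,\dots\in\overline{\C}_{\max}$ with inputs $u_k$ such that $(z_k,u_k)\in\overline{S}_{xu}$ and $z_{k+1}=Az_k+Bu_k$, and then passing to the Cesàro averages $\tfrac1T\sum_{k=0}^{T-1}(z_k,u_k)$. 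Boundedness of $\overline{\C}_{\max}$ makes $\tfrac1T(z_T-z_0)\to0$, so averaging $z_{k+1}=Az_k+Bu_k$ shows that any limit point $(x^\star,u^\star)$ of the averages satisfies $x^\star=Ax^\star+Bu^\star$; convexity and closedness of $\overline{\C}_{\max}$ and $\overline{S}_{xu}$ keep $x^\star\in\overline{\C}_{\max}$ and $(x^\star,u^\star)\in\overline{S}_{xu}$.

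Finally I would verify $(x^\star,u^\star)\in\C_{xv,(0,1)}$ by checking the closed-form constraints \eqref{eq:iRCISconditions} with $v=u^\star$. Since $x^\star$ is an equilibrium under the constant input $u^\star$, a short induction gives $A^tx^\star+\sum_{i=1}^tA^{i-1}Bu^\star=x^\star$ for every $t$, and in particular $\sum_{i=1}^\nu A^{i-1}Bu^\star=x^\star$; hence every nominal term in \eqref{eq:iRCISconditions} collapses to $x^\star$, and the constraints reduce to $(x^\star,u^\star)\in S_{xu}-\overline{W}_t\times\{0\}$ for $t<\nu$ and $(x^\star,u^\star)\in\overline{S}_{xu}$ for $t=\nu$. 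The $t=\nu$ constraint is exactly $(x^\star,u^\star)\in\overline{S}_{xu}$, which we have; for $t<\nu$ I would invoke nestedness $\overline{W}_t\subseteq\overline{W}_\infty$ of the accumulated disturbance sets (valid since $0\in W$, consistent with $\overline{W}_\infty$ being the minimal RPIS \cite{rakovic2005invariant}), so that $(x^\star+\overline{W}_t)\times\{u^\star\}\subseteq(x^\star+\overline{W}_\infty)\times\{u^\star\}\subseteq S_{xu}$, the final inclusion being a restatement of $(x^\star,u^\star)\in\overline{S}_{xu}$. I expect the reverse direction to be the main obstacle, concretely the two points of manufacturing the safe equilibrium from mere nonemptiness of $\overline{\C}_{\max}$ and the reliance on the nestedness of the $\overline{W}_t$; the forward direction and the constraint bookkeeping are routine.
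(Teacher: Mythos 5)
Your overall route matches the paper's: the ``if'' directions via Theorem~\ref{thm:outer}, and the hard direction by producing a safe fixed point $(x^\star,u^\star)$ of the nominal system $\overline{\Sigma}$ in $\overline{S}_{xu}$ and feeding it into $\C_{xv,(0,1)}$ as a constant input. Your Ces\`aro-averaging construction of the fixed point is a legitimate, self-contained replacement for the paper's citation of Caravani--De Santis (Theorem 12), using only boundedness of $\overline{S}_{xu}$ and convexity/closedness of $\overline{\C}_{\max}$ and $\overline{S}_{xu}$; that part is fine and arguably adds value.

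The gap is in the final verification. You check that the \emph{single point} $(x^\star,u^\star)$ satisfies \eqref{eq:iRCISconditions}, and for the constraints at $t<\nu$ you invoke $\overline{W}_t\subseteq\overline{W}_\infty$, which you justify by assuming $0\in W$. Theorem~\ref{thm:complete} does not assume $0\in W$ (that hypothesis appears only in Theorem~\ref{thm:convergence}), and without it the nestedness genuinely fails: e.g.\ for $W$ a singleton $\{w_0\}$ with $Ew_0\neq 0$ one has $\overline{W}_1\not\subseteq\overline{W}_\infty$, and already the $t=0$ constraint $(x^\star,u^\star)\in S_{xu}$ need not follow from $(x^\star,u^\star)\in\overline{S}_{xu}=S_{xu}-\overline{W}_\infty\times\{0\}$, since that only says $(x^\star+\overline{W}_\infty)\times\{u^\star\}\subseteq S_{xu}$ and $0$ may not lie in $\overline{W}_\infty$. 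So as written your argument proves the theorem only under the extra hypothesis $0\in W$. The paper avoids this by exhibiting not the point $(x^\star,u^\star)$ but the whole tube $(x^\star+\overline{W}_\infty)\times\{u^\star\}$ inside $\C_{xv,(0,1)}$: writing $y=x^\star+\delta$ with $\delta\in\overline{W}_\infty$, the $t$-th nominal term collapses to $x^\star+A^t\delta$, and the key identity $A^t\overline{W}_\infty+\overline{W}_t=\overline{W}_\infty$ (the iterate of $A\overline{W}_\infty+EW=\overline{W}_\infty$, valid with no assumption on $0\in W$) gives $x^\star+A^t\delta+\overline{W}_t\subseteq x^\star+\overline{W}_\infty$, whose product with $\{u^\star\}$ lies in $S_{xu}$ by $(x^\star,u^\star)\in\overline{S}_{xu}$. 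Replacing your nestedness step with this identity closes the gap; everything else in your proposal stands.
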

\begin{proof}
	We want to show that $\C_{outer,\nu}$ is nonempty if and only if $\C_{xv,(0,1)}$ is nonempty, where $\C_{xv,(0,1)}$ is defined in \eqref{eq:implicitRCISL-union} with respect to system $\Sigma$ and safe set $S_{xu}$.

	Since $\proj{\C_{xv,(0,1)}} \subseteq \C_{outer,\nu}$, immediately nonemptyness of $\C_{xv,(0,1)}$ implies nonemptyness of $\C_{outer,\nu}$. 

	For the converse, suppose that $\C_{outer,\nu}$ is nonempty. Then $\overline{\C}_{\max}$ is nonempty. By \cite[Theorem 12]{caravani2002doubly}, we know that $\overline{\C}_{\max}$ is nonempty, if and only if, there exists a fixed point $x \in \overline{\C}_{\max}$ along with a $u$ such that $(x,u) \in \overline{S}_{xu}$ and $Ax+Bu = x$. Also, note that $A\overline{W}_{\infty} + EW = \overline{W}_{\infty}$. Thus, we have: 
	\begin{align} \label{eq:W_inv} 
		\begin{aligned}
		&(x+\overline{W}_{\infty},u) \subseteq S_{xu},\\
		&A(x+ \overline{W}_{\infty}) + Bu + EW = x + \overline{W}_{\infty}. 
		\end{aligned}
	\end{align}
	According to \eqref{eq:W_inv}, for any $y\in x+ \overline{W}_{\infty}$, we have $(y,u)\in S_{xu}$ and $Ay+Bu+EW \subseteq x+\overline{W}_{\infty}$, which implies that $x+ \overline{W}_{\infty}$ is an RCIS of $\Sigma$ within $S_{xu}$. By the definition of $\C_{xv,(0,1)}$, it is easy to check that $(x+\overline{W}_{\infty},u) \subseteq \C_{xv,(0,1)}$. Thus, $\C_{xv,(0,1)}$ is nonempty.
\end{proof}
\begin{corollary}[Completeness in absence of disturbances]
    In the absence of disturbances, $\C_{outer,\nu} = \C_{max}$ and, thus, there exist $P$ and $H$ such that $\iC$ is nonempty, if and only if, $\C_{max}$ is nonempty. That is, the proposed method is complete. 
\end{corollary}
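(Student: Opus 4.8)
The plan is to reduce the claim to the already-established weak completeness result (Theorem~\ref{thm:complete}) by showing that, when disturbances vanish, the outer bound $\C_{outer,\nu}$ collapses onto the Maximal CIS $\C_{max}$. I would proceed in three stages: specialize the construction to $W=\{0\}$, prove the set equality $\C_{outer,\nu}=\C_{max}$ by two inclusions, and then substitute into Theorem~\ref{thm:complete}.

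First I would record what the disturbance-free case does to the objects appearing in~\eqref{eq:C_nominal_pre}. With $W=\{0\}$ (equivalently $E=\0$) we get $\overline{W}_{\infty}=\{0\}$, so the accumulated-disturbance terms disappear, every reachable set $\reach{x}{\{u_i\}_{i=0}^{t-1}}$ degenerates to a singleton, and the nominal data coincide with the original data: $\overline{\Sigma}=\Sigma$, $\overline{S}_{xu}=S_{xu}$, and hence $\overline{\C}_{max}=\C_{max}$. Under these identifications the defining conditions of $\C_{outer,\nu}$ read as follows: $x\in\C_{outer,\nu}$ if and only if there is a finite sequence $\{u_i\}_{i=0}^{\nu-1}$ keeping the (now pointwise) trajectory in $S_{xu}$ for the first $\nu$ steps and landing in $\C_{max}$ at step $\nu$.

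Next I would establish $\C_{outer,\nu}=\C_{max}$. The inclusion $\C_{outer,\nu}\subseteq\C_{max}$ is free from the chain~\eqref{eq:cis_order} (or simply because $\C_{outer,\nu}$ is an RCIS within $S_{xu}$, hence contained in the maximal one). For the reverse inclusion, I would take $x\in\C_{max}$ and use controlled invariance iteratively: since $\C_{max}$ is controlled invariant in the disturbance-free setting, there is $u_0$ with $(x,u_0)\in S_{xu}$ and $Ax+Bu_0\in\C_{max}$; repeating this $\nu$ times produces $u_0,\dots,u_{\nu-1}$ that keep the trajectory inside $\C_{max}\subseteq\proj{S_{xu}}$ at every intermediate step while satisfying the state-input constraints, and leave the $\nu$-th iterate in $\C_{max}$. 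This is exactly the membership condition for $x\in\C_{outer,\nu}$ derived above, giving $\C_{max}\subseteq\C_{outer,\nu}$.

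Finally, with $\C_{outer,\nu}=\C_{max}$ in hand, the corollary is immediate from Theorem~\ref{thm:complete}: that theorem states $\C_{outer,\nu}\neq\emptyset$ if and only if some implicit RCIS $\iC$ is nonempty (concretely $\C_{xv,(0,1)}$, with $P,H$ as in~\eqref{eq:lassobehavior} for $\TL=(0,1)$); substituting $\C_{outer,\nu}=\C_{max}$ yields nonemptiness of $\C_{max}$ if and only if such $P,H$ exist, which is precisely completeness. I do not expect a genuine obstacle here; the only care needed is bookkeeping — verifying that the singleton reachable sets make the set-inclusions in~\eqref{eq:C_nominal_pre} reduce to the pointwise conditions above, and that the iterated invariance argument yields a valid finite input sequence rather than merely an infinite one.
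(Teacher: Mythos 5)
Your proof is correct and follows exactly the route the paper intends: the paper states this corollary without proof as an immediate consequence of Theorem~\ref{thm:complete}, and the missing ingredient is precisely your observation that with $W=\{0\}$ the reachable sets become singletons, $\overline{\Sigma}=\Sigma$, $\overline{S}_{xu}=S_{xu}$, $\overline{\C}_{\max}=\C_{\max}$, so that $\C_{outer,\nu}\subseteq\C_{\max}$ follows from \eqref{eq:cis_order} and $\C_{\max}\subseteq\C_{outer,\nu}$ follows by iterating controlled invariance $\nu$ times. Both inclusions and the final substitution into Theorem~\ref{thm:complete} are carried out correctly, so nothing further is needed.
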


The significance of Theorem~\ref{thm:complete} lies in allowing to quickly check nonemptiness of $\C_{outer,\nu}$ by computing $\C_{xv,(0,1)}$, which we can do in closed-form. 
Even though the gap between $\C_{outer,\nu}$ and $\C_{\max}$ is still an open question at the writing of this manuscript, we show that $\proj{\iC}$ can actually converge to its outer bound for a specific choice of $H$ and $P$ matrices. 

\begin{theorem}[Convergence to $\C_{outer,\nu}$]
\label{thm:convergence}
	Assume that the disturbance set $W$ contains $0$, and the interior of $\overline{S}_{xu}$ contains a fixed point $(x,u)$ of $ \overline{\Sigma}$. There exist matrices $H$ and $P$ such that $\proj{\iC}$ approaches $\C_{outer,\nu}$. Specifically, if $H$ and $P$ are as in \eqref{eq:lassobehavior}, by increasing $\tau$ in \eqref{eq:lassobehavior}, $\proj{\iC}$ converges to $\C_{outer,\nu}$ in Hausdorff distance exponentially fast. 
\end{theorem}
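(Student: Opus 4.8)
The plan is to prove the one-sided inclusion \emph{quantitatively}, since Theorem~\ref{thm:outer} already gives $\proj{\iC}\subseteq\C_{outer,\nu}$ for every choice of $P,H$. Thus the Hausdorff distance collapses to $\sup_{x\in\C_{outer,\nu}}\mathrm{dist}(x,\proj{\iC})$, and it suffices to show that every $x_0\in\C_{outer,\nu}$ is approximated, to within an error decaying geometrically in $\tau$, by some $y_0\in\proj{\C_{xv,(\tau,1)}}$; I work with period $\lambda=1$ (eventually constant inputs), the general $\lambda$ following by squeezing, since enlarging $\lambda$ only grows the set (Remark~\ref{rem:clarification}) while staying inside $\C_{outer,\nu}$. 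First I would record the characterization that $x_0\in\C_{outer,\nu}$ iff there is an infinite open-loop input $\{u_t\}_{t\ge0}$ keeping the disturbed reachable sets safe, $(\reach{x_0}{\{u_i\}_{i=0}^{t-1}},u_t)\subseteq S_{xu}$ for all $t$: the forward direction is immediate, and the converse uses nilpotency together with Lemma~\ref{thm:inv_sum} exactly as in the proof that $\C_{outer,\nu}$ is an RCIS. Because $\overline W_t\subseteq\overline W_\infty$, this is equivalent to the nominal center trajectory $x^c_{t+1}=Ax^c_t+Bu_t$, $x^c_0=x_0$, satisfying $(x^c_t,u_t)\in\overline S_{xu}$ for $t\ge\nu$ and the looser $(x^c_t,u_t)\in S_{xu}-\overline W_t\times\{0\}\supseteq\overline S_{xu}$ for $t<\nu$.

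Next I would exploit the interior fixed point. Let $(x^*,u^*)\in\mathrm{int}(\overline S_{xu})$ with $Ax^*+Bu^*=x^*$ and a safety margin $\rho>0$, and set $D=\sup\{\|x-x^*\|:(x,u)\in\overline S_{xu}\}$, finite since the safe set is a polytope. The idea is to deform the bounded center trajectory $\{x^c_t\}$ toward $x^*$ by a block-wise convex blend whose blend parameter is driven all the way to the fixed point, so the trajectory becomes exactly constant after finitely many steps and closes into a period-$1$ lasso. Concretely, partition time into blocks of length $\nu$; on block $j$ follow the blended reference $x^*+\beta_j(x^c_t-x^*)$, $u^*+\beta_j(u_t-u^*)$, which lies in $\overline S_{xu}$ with margin $\gamma_j\rho$ where $\gamma_j=1-\beta_j$. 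Between consecutive blocks I superimpose a controllability maneuver---available because the system is controllable with reachability horizon $\nu$ (Remark~\ref{rem:nilpotentsystems})---that changes the blend factor from $\beta_j$ to $\beta_{j+1}$; by linearity its state/input excursion is bounded by $C'(\gamma_{j+1}-\gamma_j)D$ for a controllability constant $C'$, hence it fits inside the available margin $\gamma_j\rho$ precisely when $\gamma_{j+1}\le\gamma_j(1+\rho/C')$. Taking the largest admissible increments lets $\gamma_j$ grow geometrically from $\gamma_0$ up to $1$ in $j^*\sim\log(1/\gamma_0)/\log(1+\rho/C')$ blocks; the trajectory then sits at $x^*$, and holding $u^*$ keeps the center at $x^*$ with reachable set $x^*+\overline W_\infty$, which is robustly invariant since $A\overline W_\infty+EW=\overline W_\infty$ and $(x^*,u^*)\in\overline S_{xu}$ (here the hypothesis $0\in W$ is what ties nominal safety of the center to genuine robust safety). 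The result is an eventually-constant, robustly safe input, so its initial state $y_0=x^*+\beta_0(x_0-x^*)$ lies in $\proj{\C_{xv,(\tau,1)}}$ with transient $\tau=\nu j^*$, and $\|x_0-y_0\|=\gamma_0\|x_0-x^*\|\le\gamma_0 D$.

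Inverting the relation between $\tau$ and $\gamma_0$ then yields the rate: with transient $\tau$ I can afford $\gamma_0=(1+\rho/C')^{-\tau/\nu}$, so $\|x_0-y_0\|\le D\,(1+\rho/C')^{-\tau/\nu}$. Since $D$, $\rho$, $C'$ and $\nu$ are uniform over $x_0\in\C_{outer,\nu}$, this bounds the one-sided Hausdorff distance and gives $d(\proj{\iC},\C_{outer,\nu})\le D\,\kappa^{\tau}$ with $\kappa=(1+\rho/C')^{-1/\nu}\in(0,1)$, i.e. convergence in $\tau$ that is exponential.

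I expect the main obstacle to be the block transition: making precise that a controllability maneuver realizing the change of blend factor can be superimposed on the blended reference while keeping the combined state--input trajectory inside $\overline S_{xu}$, and tracking its magnitude so that the clean recursion $\gamma_{j+1}\le\gamma_j(1+\rho/C')$---and hence the geometric growth of $\gamma_j$---genuinely holds with a uniform constant $C'$. A secondary care point is the bookkeeping at $t<\nu$, where only the looser constraints $S_{xu}-\overline W_t\times\{0\}$ apply, and the verification that the eventually-constant tail is \emph{robustly} invariant rather than merely nominally safe, for which $0\in W$ and the identity $A\overline W_\infty+EW=\overline W_\infty$ are exactly what is needed.
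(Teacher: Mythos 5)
Your proposal is correct in outline but follows a genuinely different route from the paper. The paper never constructs approximating trajectories directly: it first shows (Claims 2 and 3 in the appendix) that both $\C_{outer,\nu}$ and $\proj{\C_{xv,(\tau,\lambda)}}$ are projections of sets of the form $\C_{xv,0}\cap(\R^n\times\mathcal{U}(\cdot))$, where the argument of $\mathcal{U}$ is $\overline{\C}_{max}$ in the first case and the nominal CIS $\overline{\C}_{x,(\tau-\nu,\lambda)}$ in the second; it then imports an exponential-convergence result for $k$-step null-controllable sets (\cite{liu22conv}) to get $\overline{\C}_{x,(\tau,\lambda)}\supseteq(1-c_0a^{\tau})\overline{\C}_{max}$, and propagates this scaled inclusion through $\mathcal{U}$ and $\pi_n$ using the order cancellation lemma and the fact that $0\in\C_{xv,0}$. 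Your argument instead builds, for each $x_0\in\C_{outer,\nu}$, an explicit eventually constant safe input by blending the infinite safe open-loop trajectory toward the interior fixed point with a geometrically growing blend parameter; this is more self-contained (no external null-controllability theorem) at the cost of the block-transition bookkeeping you flag, and it yields the same qualitative rate. Both proofs pivot on the same two hypotheses in the same way: $0\in W$ to pass between the disturbed reachability constraints and the nominal constraints in $\overline{S}_{xu}$, and the interior fixed point to create the margin that drives the geometric rate.

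One care point: you justify the inter-block maneuver by invoking controllability via Remark~\ref{rem:nilpotentsystems}, but Assumption~\ref{asmpt:prbAssumption} only posits a feedback making $A$ nilpotent and does not by itself guarantee controllability, so as written that step is not licensed. Fortunately your construction does not actually need it: since $A^{\nu}=\mathbb{0}$, applying the $\beta_{j+1}$-blended inputs for $\nu$ steps from the end state of block $j$ lands \emph{exactly} on the $\beta_{j+1}$-blended trajectory (the terminal state $\sum_{i=1}^{\nu}A^{i-1}Bu_{t-i}$ forgets the initial condition), and the intermediate deviation is $A^{t}(\gamma_{j+1}-\gamma_j)(x^c-x^*)$, bounded by $\max_t\Vert A^t\Vert\,(\gamma_{j+1}-\gamma_j)D$, which fits in the margin $\gamma_j\rho$ under the same geometric recursion for $\gamma_j$. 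With that replacement (and the routine verification you already sketch for $t<\nu$ and for the robust invariance of the constant tail via $A\overline{W}_{\infty}+EW=\overline{W}_{\infty}$), the argument goes through.
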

\begin{proof}
Without loss of generality, assume that the fixed point $(x,u)$ of $\overline{\Sigma}$ in the interior of $\overline{S}_{xu}$ is the origin of the state-input space. We define a set operator $ \mathcal{U}(\C)$ that maps a subset $\C$ of $\R^{n}$ to a subset of $\R^{\nu m}$:
\begin{align}
\label{eq:u_set_op}
   \mathcal{U}(\C) = \bigg\{u_{0:\nu-1}\in \R^{\nu m} \,\bigg\vert\, \sum_{i=1}^{\nu} A^{i-1}Bu_{\nu-i}\in \C\bigg\},
\end{align}
where $u_{0:\nu-1}$ denotes the vector $(u_0,u_1, \cdots, u_{\nu-1})\in \R^{\nu m}$.

To maintain a streamlined presentation, we make the following claims that we prove in Appendix~\ref{sec:proof_of_claims}. 

\emph{Claim 1}: The polytope $\C_{xv,0}$ contains the origin, where: 
\begin{align*}
		\C_{xv,0} =  \{(x, u_{0:\nu-1})\in \R^{n+\nu m} \mid \left(\reach{x}{\{u_{i}\}_{i=0}^{t-1}},u_{t}\right) \hspace*{-1mm}\subseteq\hspace*{-1mm} S_{xu}, t = 0,\dots, \nu-1	\}. \nonumber
\end{align*}

\emph{Claim 2}: For the set $\C_{outer, \nu}$ in \eqref{eq:C_nominal_pre} it holds that: 
	\begin{align} 
	\label{eq:43} 
	    \C_{outer, \nu} = \pi_{n} (\C_{xv,max}),
	\end{align}
where $\C_{xv,max} = \C_{xv,0}\cap ( \R^{n} \times \mathcal{U}(\overline{\C}_{max}))$.

\emph{Claim 3}: 
Let $\overline{\C}_{xv, (\tau, \lambda)}$ be the implicit CIS of the nominal system  $\overline{\Sigma}$ within $\overline{S}_{xu}$ with $H$ and $P$ as in \eqref{eq:lassobehavior} and let \mbox{$\overline{\C}_{x, (\tau, \lambda)} = \pi_{n}(\overline{\C}_{xv, (\tau, \lambda)})$}. 
The implicit RCIS $\C_{xv, (\tau, \lambda)}$ of $\Sigma$ within $S_{xu}$ with $H$ and $P$ as in \eqref{eq:lassobehavior} satisfies: 
\begin{align} 
\label{eq:45} 
\pi_{n}(\C_{xv, (\tau, \lambda)})= \pi_{n}(\widehat{\C}_{xv,(\tau, \lambda)}), \text{ for any $\tau \geq \nu$}, 
\end{align}
where $\widehat{\C}_{xv, (\tau, \lambda)} = \C_{xv,0} \cap (\R^{n}\times\mathcal{U}(\overline{\C}_{x, (\tau-\nu, \lambda)}))$. 

\emph{Claim 4}: 
There exist $c_0 > 0$, $a\in [0,1)$, and some $\tau_1 \geq 0$ such that for any $ \lambda \geq 1$ and for any $\tau \geq \tau_1$: 
\begin{align} 
\label{eq:46} 
\overline{\C}_{x, (\tau, \lambda)} \supseteq (1- c_0a^{\tau})\overline{\C}_{max}, 
\end{align}
with $\tau_1$ big enough such that $1-c_0a^{\tau_1} \geq 0$ and thereby the right hand side of \eqref{eq:46} is well-defined.

We use these claims to prove the desired convergence rate. The operator $\mathcal{U}(\cdot)$ in \eqref{eq:u_set_op} is linear with respect to scalar multiplication, i.e, $\mathcal{U}( \xi \mathcal{C}) = \xi \mathcal{U}( \mathcal{C})$, $ \xi \geq 0$, and monotonic, i.e., $\mathcal{U}(\C_1) \supseteq \mathcal{U}(\C_2)$, $\C_1 \supseteq \C_2$. According to \eqref{eq:46}, for $\tau \geq \tau_1$: 
\begin{align}
  \mathcal{U}(\overline{\C}_{x, (\tau, \lambda)}) \supseteq (1-c_0a^{\tau})\mathcal{U}(\overline{\C}_{max}). 
\end{align}
Note $\tau_0 = \nu + \tau_1$. By \eqref{eq:45}, for $\tau \geq \tau_0$:
\begin{align}
	\begin{split} \label{eq:48} 
		\widehat{\C}_{xv,(\tau, \lambda)}	&	\supseteq  \C_{xv,0}\cap(1-c_0a^{\tau-\nu}) (\R^{n}\times \mathcal{U}(\overline{\C}_{max}))
		\supseteq  (1-c_0a^{\tau-\nu})(\C_{xv,0}\cap (\R^{n}\times \mathcal{U}(\overline{\C}_{max}))) \\
	&	\supseteq  (1-c_0a^{\tau-\nu})\C_{xv,max}.
	\end{split}
\end{align}
The second inclusion above holds since $0\in \C_{xv,0}$ and thus $(1-c_0a^{\tau})\C_{xv,0} \subseteq \C_{xv,0}$.
Note that $\pi_{n}(\cdot)$ is also linear with respect to scalar multiplication. By \eqref{eq:43}, \eqref{eq:45} and \eqref{eq:48}, for $\tau \geq \tau_0$:
\begin{align}
	\label{eq:49} 
		\C_{x,(\tau, \lambda)}= \pi_{n}(\C_{xv,(\tau, \lambda)}) = \pi_{n}(\widehat{\C}_{xv, (\tau, \lambda)}) 
	\supseteq \pi_{n}((1-c_0 a^{\tau-\nu})\C_{xv,max}) = (1-c_0a^{\tau-\nu}) \C_{outer,\nu}. 
\end{align}
By Theorem \ref{thm:outer} and \eqref{eq:49}, for any $\tau \geq  \tau_0$: 
\begin{align} \label{eq:51} 
 (1-c_0a^{\tau-\nu}) \C_{outer,\nu} \subseteq \C_{x,(\tau, \lambda)} \subseteq \C_{outer,\nu}. 
\end{align}
Let $c_1= \max_ {x_1,x_2\in \C_{outer,\nu}}\Vert x_1-x_2\Vert_{2}$ be the diameter of $\C_{outer,\nu}$, which is finite since $S_{xu}$ is bounded. Then, by \eqref{eq:51}, the Hausdorff distance between $\C_{x,(\tau, \lambda)}$ and $\C_{outer,\nu}$ satisfies: 
\begin{align*}
    d(\C_{x,(\tau, \lambda)}, \C_{outer,\nu})\leq c a^{\tau},~\text{for $c= c_0 c_1a^{-\nu}$ and $\tau \geq \tau_0$}.
\end{align*}
\end{proof}

Note that $\C_{outer,\nu}$ contains the union of the projections  $\pi_n(\C_{xv})$ for all general implicit RCISs $\C_{xv}$ suggested by Theorem \ref{thm:implicit_RCIS2} (that is, the matrices $H$ and $P$ can be arbitrary, not necessarily the eventually periodic ones in Section \ref{sec:iRCISclosedform}). Hence, intuitively the set $\C_{outer,\nu}$ should be much larger than the projection of any specific implicit RCIS $\C_{xv}$ corresponding to an eventually periodic $H$ and $P$ in Section \ref{sec:iRCISclosedform}. However, Theorem \ref{thm:convergence} shows that the proposed implicit RCIS can approximate $\C_{outer,\nu}$ arbitrarily well by just using the simple $H$ and $P$ matrices as in \eqref{eq:CPmatrices}. Moreover, the approximation error decays exponentially fast as we increase the parameter $\tau$ in \eqref{eq:CPmatrices}. This result implies that the eventually periodic input structure explored in Section III.B and III.C is rich enough, and not as conservative as what it may look at first sight. 

\begin{corollary}
\label{cor:nominal_converge}
   In the absence of disturbances, if the interior of $S_{xu}$ contains a fixed point of $\Sigma$, then for any \mbox{$\lambda>0$}, then $\mathcal{C}_{x,(\tau, \lambda)}$ converges to the Maximal CIS  $\C_{max}$ in Hausdorff distance exponentially fast as $\tau$ increases. 
\end{corollary}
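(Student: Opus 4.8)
The plan is to obtain this statement as a one-line specialization of Theorem~\ref{thm:convergence}, fused with the equality $\C_{outer,\nu} = \C_{\max}$ that holds without disturbances. First I would carry out the bookkeeping that collapses every ``barred'' object to its unbarred counterpart. In the absence of disturbances we have $EW = \{0\}$, so $\overline{W}_t = \{0\}$ for every $t$ and hence $\overline{W}_{\infty} = \{0\}$ by \eqref{eq:min_rpis}. Consequently the nominal system $\overline{\Sigma}$ is identical to $\Sigma$, the nominal safe set $\overline{S}_{xu} = S_{xu} - \overline{W}_{\infty}\times\{0\}$ is identical to $S_{xu}$, and therefore $\overline{\C}_{\max} = \C_{\max}$, the Maximal CIS of $\Sigma$ within $S_{xu}$.

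Next I would verify the hypotheses of Theorem~\ref{thm:convergence}. That theorem asks for $0 \in W$, which is automatic when there are no disturbances, and for the interior of $\overline{S}_{xu}$ to contain a fixed point $(x,u)$ of $\overline{\Sigma}$. By the previous step $\overline{S}_{xu} = S_{xu}$ and $\overline{\Sigma} = \Sigma$, so this is exactly the hypothesis of the present corollary, namely that the interior of $S_{xu}$ contains a fixed point of $\Sigma$. Theorem~\ref{thm:convergence} then applies verbatim with $H$ and $P$ as in \eqref{eq:lassobehavior}: for each fixed $\lambda > 0$, increasing $\tau$ drives $\C_{x,(\tau,\lambda)}$ to $\C_{outer,\nu}$ in Hausdorff distance exponentially fast, i.e. $d(\C_{x,(\tau,\lambda)}, \C_{outer,\nu}) \leq c\,a^{\tau}$ for some $c > 0$ and $a \in [0,1)$.

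Finally I would invoke the Corollary to Theorem~\ref{thm:complete}, which states that $\C_{outer,\nu} = \C_{\max}$ in the absence of disturbances. Substituting this equality into the bound above yields $d(\C_{x,(\tau,\lambda)}, \C_{\max}) \leq c\,a^{\tau}$, which is precisely the asserted exponential convergence of $\C_{x,(\tau,\lambda)}$ to the Maximal CIS. I do not anticipate a genuine obstacle: the result is a direct corollary, and the only point needing care is the first, bookkeeping step, namely ensuring that each barred quantity collapses correctly so that both Theorem~\ref{thm:convergence} and the Corollary to Theorem~\ref{thm:complete} are legitimately invoked with the same $\Sigma$, the same $S_{xu}$, and the same fixed point.
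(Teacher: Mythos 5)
Your proposal is correct and follows exactly the route the paper intends: specialize Theorem~\ref{thm:convergence} using the fact that without disturbances $\overline{W}_{\infty}=\{0\}$, so $\overline{\Sigma}=\Sigma$, $\overline{S}_{xu}=S_{xu}$, and the hypotheses match, then substitute $\C_{outer,\nu}=\C_{\max}$ from the completeness corollary following Theorem~\ref{thm:complete}. The bookkeeping step you flag is indeed the only point requiring care, and you handle it correctly.
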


The condition that the interior of $S_{xu}$ (resp. $\overline{S}_{xu}$) contains a fixed point of $\Sigma$ (resp. $\overline{\Sigma}$) in Corollary \ref{cor:nominal_converge} (resp. Theorem \ref{thm:convergence}) is critical to our method:  
\begin{example}
   Let $\Sigma$ be $x_1^+=x_2$, $x_2^+=u$ and the safe set $S_{xu} = \{(x,u) | -1 \leq x_1 $, $ 1.5x_2 \leq x_1 \leq  2 x_2, u\in [-1,1]\}$. The only fixed point of $\Sigma$ in $S_{xu}$ is the origin in $\R^{3}$, which is also a vertex of $S_{xu}$. It is easy to check that $\C_{max} = \pi_{n}(S_{xu})$, but the largest CIS $\pi_{n}(C_{xv})$ computed by our method is equal to the singleton set $\{0\}$. 

If we expand $S_{xu}$ slightly so that its interior contains the origin, there immediately exist $H$ and $P$ such that $\pi_{n}(\iC)$ approximates $\C_{max}$ arbitrarily well, as expected by Corollary~\ref{cor:nominal_converge}. Conversely, if we slightly shrink $S_{xu}$ so that it does not contain any fixed point, then $\C_{max}$ is empty\cite[Theorem12]{caravani2002doubly}. 
\end{example}

\begin{remark}
Under the assumption that $0\in W$, let $\mathbb{S}_{xu}$ be the set of all the polytopic safe sets $S_{xu}$ that have a nonempty $\C_{outer,\nu}$. Moreover, let $\partial \mathbb{S}_{xu}$ be the set of all safe sets $S_{xu}\in \mathbb{S}_{xu}$, whose corresponding nominal safe set $\overline{S}_{xu}$ does not contain a fixed point of $\overline{\Sigma}$ in the interior. It can be shown that $\partial \mathbb{S}_{xu}$ must be contained by the boundary of $\mathbb{S}_{xu}$ in the topology induced by Hausdorff distance. Consequently, for any safe set in the interior of $\mathbb{S}_{xu}$, there exists $H$ and $P$ such that $\pi_n(\iC)$ approximates $\C_{outer,\nu}$ arbitrarily well. 
\end{remark}

\section{Case studies}
\label{sec:simulations}
A MATLAB implementation of the proposed method, along with instructions to replicate our case studies, can be found at \url{https://github.com/janis10/cis2m}. 

\subsection{Quadrotor obstacle avoidance using explicit RCIS}
We begin by tackling the supervision problem, defined in Section~\ref{subsec:supervision}, for the task of quadrotor obstacle avoidance. That is, we filter nominal inputs to the quadrotor to ensure collision-free trajectories. The dynamics of the quadrotor can be modeled as a non-linear system with 12 states \cite{mueller2013mpcquadcopter}. Nonetheless, this system is differentially flat, which implies that the states and inputs can be rewritten as a function of the so-called flat outputs and a finite number of their derivatives \cite{zhou2014flatquadrotors}. Exploiting this property, we obtain an equivalent linear system that expresses the motion of a quadrotor. Moreover, the original state and input constraints can be overconstrained by polytopes in the flat output space \cite{pannocchi2021iros}. Then, the motion of a quadrotor can be described by:
\begin{align*}
   x^+ = A x + B u + E w,
\end{align*}
with \mbox{$A = \blk(A_1,A_2,A_3)$}, \mbox{$B=\blk(B_1,B_2,B_3)$}, and:
\begin{align*}
A_i = 
\begin{bmatrix}
	1 & T_s & \frac{T_s^2}{2!}	\\
	0 & 1 & T_s	\\
	0 & 0 & 1
\end{bmatrix}
,
~
B_i = 
\begin{bmatrix}
	\frac{T_s^3}{3!} \\
	\frac{T_s^2}{2!} \\
	T_s
\end{bmatrix}
.
\end{align*}
The state $x\in\R^9$ contains the 3-dimensional position, velocity, and acceleration, while the input $u \in \R^3$ is the 3-dimensional jerk. The matrix $E$ and disturbance $w$ are selected appropriately to account for various errors during the experiment. 

The operating space for the quadrotor is a hyper-box with obstacles in $\R^3$, see Fig.~\ref{fig:coolplot}. The safe set is described as the obstacle-free space, a union of overlapping hyper-boxes in $\R^3$, along with box constraints on the velocity and the acceleration: 
\begin{align*}
    S = \bigcup_{j=1}^N \left[ \underline{p}_j,\overline{p}_j \right] \times \left[ \underline{v},\overline{v} \right] \times \left[ \underline{a},\overline{a} \right] ,
\end{align*}
where $[ \underline{p}_j,\overline{p}_j ] \subset \R^3$, for $j=1,\dots,N$, is a hyper-box in the obstacle-free space, $\left[ \underline{v},\overline{v} \right] \subset \R^3$ and $\left[ \underline{a},\overline{a} \right] \subset \R^3$ denote the velocity and acceleration constraints respectively. The safe set is a union of polytopes, while our framework is designed for convex polytopes. Since we already know the obstacle layout, we compute an explicit RCIS for each polytope in the safe set. As these polytopes overlap we expect, and it is actually the case in our experiments, that the RCISs do so as well. This allows, when performing supervision, to select the input that keeps the quadrotor into the RCIS of our choice when in the intersection of overlaping RCISs and, hence, navigate safely. 

\begin{figure}[t]
	\includegraphics[width=0.70\textwidth]{./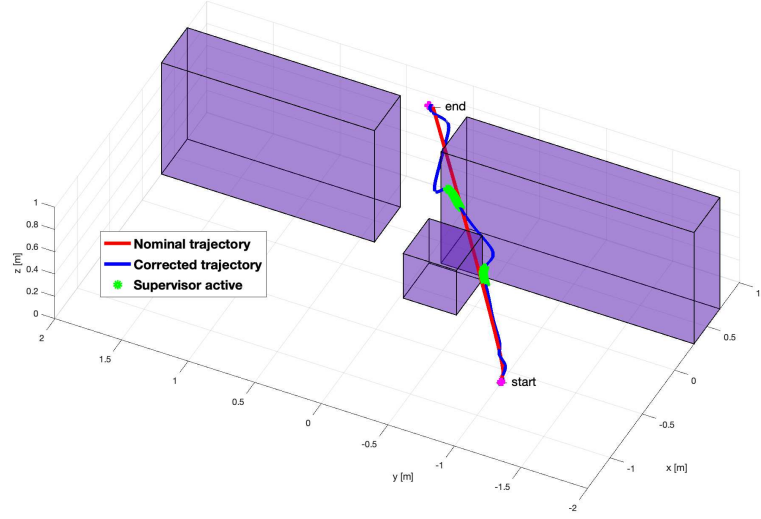}
	\vspace{-10mm}
	\caption{Quadrotor operational region. Obstacles in purple transparent boxes. 
	Nominal trajectory (red), corrected trajectory (blue), supervision active (green).}
	\label{fig:coolplot}
\end{figure}

\begin{figure}[t!]
\centering
\begin{subfigure}{.5\textwidth}
	\includegraphics[width=1.0\linewidth]{./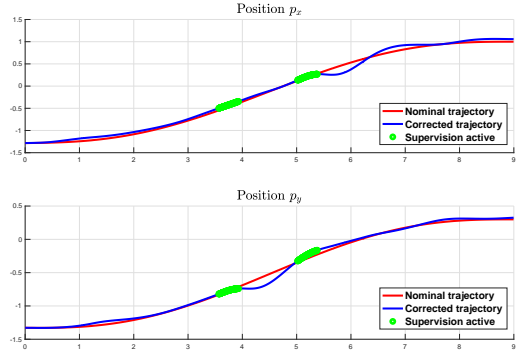}
	\caption{Quadrotor position}
	\label{fig:position}
\end{subfigure}
\begin{subfigure}{.5\textwidth}
	\includegraphics[width=1.0\linewidth]{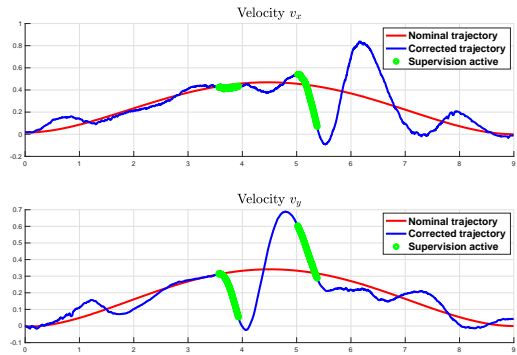}
    \caption{Quadrotor velocity}
	\label{fig:velocity}
\end{subfigure}
\begin{subfigure}{.5\textwidth}
	\includegraphics[width=1.0\linewidth]{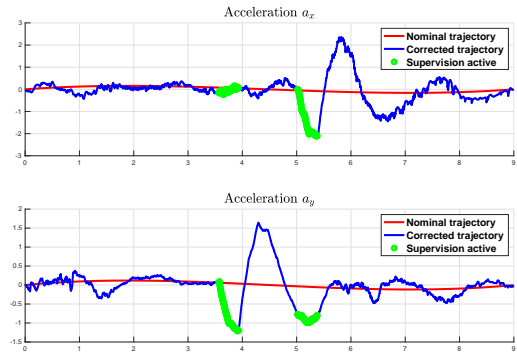}
    \caption{Quadrotor acceleration}
	\label{fig:acceleration}
\end{subfigure}
\caption{Quadrotor trajectory in $x$-$y$ plane: nominal trajectory (red), corrected trajectory (blue), supervision active (green).}
\label{fig:trajectory}
\end{figure}

Our goal is to ensure collision-free trajectory tracking. In Fig.~\ref{fig:coolplot}, the nominal trajectory (red line) moves the quadrotor from a start point to an end point through the obstacles. As we can appreciate, the supervised trajectory (blue curve) takes the quadrotor around the obstacles and, safely, to the end point. When the supervision is active, the quadrotor performs more aggressive maneuvers to avoid the obstacle as shown in Fig.~\ref{fig:velocity} and Fig.~\ref{fig:acceleration}, where we omit the \mbox{$z$-axis} as in this experiment the quadrotor maintains a relatively constant altitude. A video of the experiment is found at \url{https://tinyurl.com/drone-supervision-cis}. For visualizing the trajectory and the obstacles in the video, we used the Augmented Reality Edge Networking Architecture (ARENA) \cite{arena}.

In this experiment we utilized the explicit RCIS \mbox{$\CTL=\proj{\iCTL}$} with $\TL=(0,6)$ and the one-step projection was done in just several seconds for this specific system. Our hardware platform is the open-source Crazyflie 2.0 quadrotor. The operating space for the position is $[-2,2]\times[-2,2]\times[0,1]$ (measured in $m$) and the obstacles are shown in Fig.~\ref{fig:coolplot}. The velocity, acceleration, and jerk constraints are \mbox{$[-1.0, 1.0]$} (measured in $m/s$), \mbox{$[-2.83,2.83]$} (in $m/s^2$), and \mbox{$[-59.3,59.3]$} (in $m/s^3$) respectively. The sampling time is $T_s = 0.18 s$. For the state estimation we use a Kalman filter, where the measurements are the quadrotor's position and attitude as obtained by the motion capture system OptiTrack. The nominal controller is a feedback controller stabilizing the error dynamics between the current state and a tracking point in the nominal trajectory. The optimization problems were solved by GUROBI \cite{gurobi}. 

\subsection{Safe online planning using implicit RCIS}
\label{subsec:planning-exp}
Next, we solve the safe online planning problem, discussed in Section~\ref{subsec:planning}, for ground robot navigation. The map is initially unknown and is built online based on LiDAR measurements. While navigating the robot needs to avoid the obstacles, indicated by the dark area in Fig.~\ref{fig:map}, and reach the target point. This case study is inspired by the robot navigation problem in \cite{bajcsy2019efficient}.

\begin{figure}[t]
	\centering
	\includegraphics[width=0.5\textwidth]{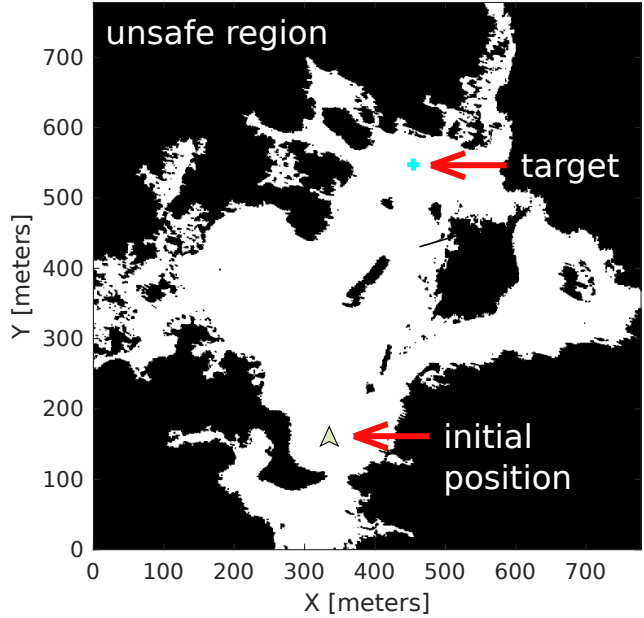}
	\caption{Robot operational space: initial position (yellow arrowhead), target position (cyan), unsafe region (dark area).}
	\label{fig:map}
\end{figure}

The robot's motion, using forward Euler discretization, is:
\begin{align*}
x^+ = 
\begin{bmatrix}
	\I & \I T_s\\
	0 & \I
\end{bmatrix} 
x
+ 
\begin{bmatrix}
0 \\
\I T_s 
\end{bmatrix} 
u
,
\end{align*} 
where the state $x = (p_x,p_y,v_x,v_y) \in\R^4$ is the robot's position and velocity and the input \mbox{$u=(u_1, u_2) \in\R^2$} is the acceleration. The safe set consists of two parts:

1) The time-invariant constraints \mbox{$v_x,v_y \in [-\overline{v},\overline{v}]$} and \mbox{$u_1,u_2 \in [-\overline{u},\overline{u}]$}.

2) The time-varying constraint of $(p_x,p_y)$ within the obstacle-free region, shown by the white nonconvex area in Fig.~\ref{fig:map}. 
The obstacle-free region, denoted by $M(t) \subseteq R^{2}$, is determined by a LiDAR sensor using data up to time $t$. Combining the two constraints, the safe set at time $t$ is:
\begin{align}
S_{xu}(t) = &\{(p_x,p_y,v_x,v_y,u_1,u_2) \mid  (p_x,p_y)\in M(t), \nonumber\\
& v_x,v_y \in [-\overline{v},\overline{v}], ~ u_1,u_2 \in [-\overline{u},\overline{u}]\}.   \nonumber
\end{align}
Since $M(t) \subseteq M(t+1)$, we have $S_{xu}(t) \subseteq S_{xu}(t+1)$, $t\geq 0$.  

The overall control framework is shown in Fig.~\ref{fig:control_framework}. Initially, the map is blank and the path planner generates a reference trajectory assuming no obstacles. At each time $t$, the map is updated based on the latest LiDAR measurements and the path planner checks if the reference trajectory collides with any obstacles in the updated map. If so, it generates a new, collision-free, reference path. Then, the nominal controller provides a candidate input \mbox{$\tilde{u}=(\tilde{u}_1(t),\tilde{u}_2(t))$} tracking the reference path. When updating the reference trajectory, a transient period is needed for the robot to converge to the new reference. Moreover, the path planner cannot guarantee satisfaction of the input constraints. To resolve these issues, we add a supervisory control to the candidate inputs. Based on the updated obstacle-free region $M(t)$, we construct the safe set $S_{xu}(t)$ and compute an implicit CIS $\iCTL(t)$ within $S_{xu}(t)$. To handle the nonconvexity of $S_{xu}(t)$, we first compute a convex composition of $S_{xu}(t)$. When constructing $\iCTL(t)$, we let the reachable set at each time belong to one of the convex components in $S_{xu}(t)$, encoded by mixed-integer linear inequalities. For details see \cite{zexiang2021adhs}. The convex decomposition of $S_{xu}(t)$ becomes more complex over time, which slows down the algorithm. To lighten the computational burden, we replace the full convex composition by the union of the $10$ largest hyper-boxes in $S_{xu}(t)$ as the safe set. Given the constructed implicit CIS $\iCTL(t)$ at time $t$, we supervise the nominal control input $\tilde{u}(t)$ by solving $\mathcal{P}(t,t^*)$ as discussed in Section \ref{subsec:planning}. Note that $P(t,t^*)$ becomes a mixed-integer program as we introduced binary variables for the convex composition of the safe set and, therefore, in the implicit CIS.

\begin{figure}[t!]
	\centering
	\begin{subfigure}{\linewidth}
		\centering
		\includegraphics[width=0.7\textwidth]{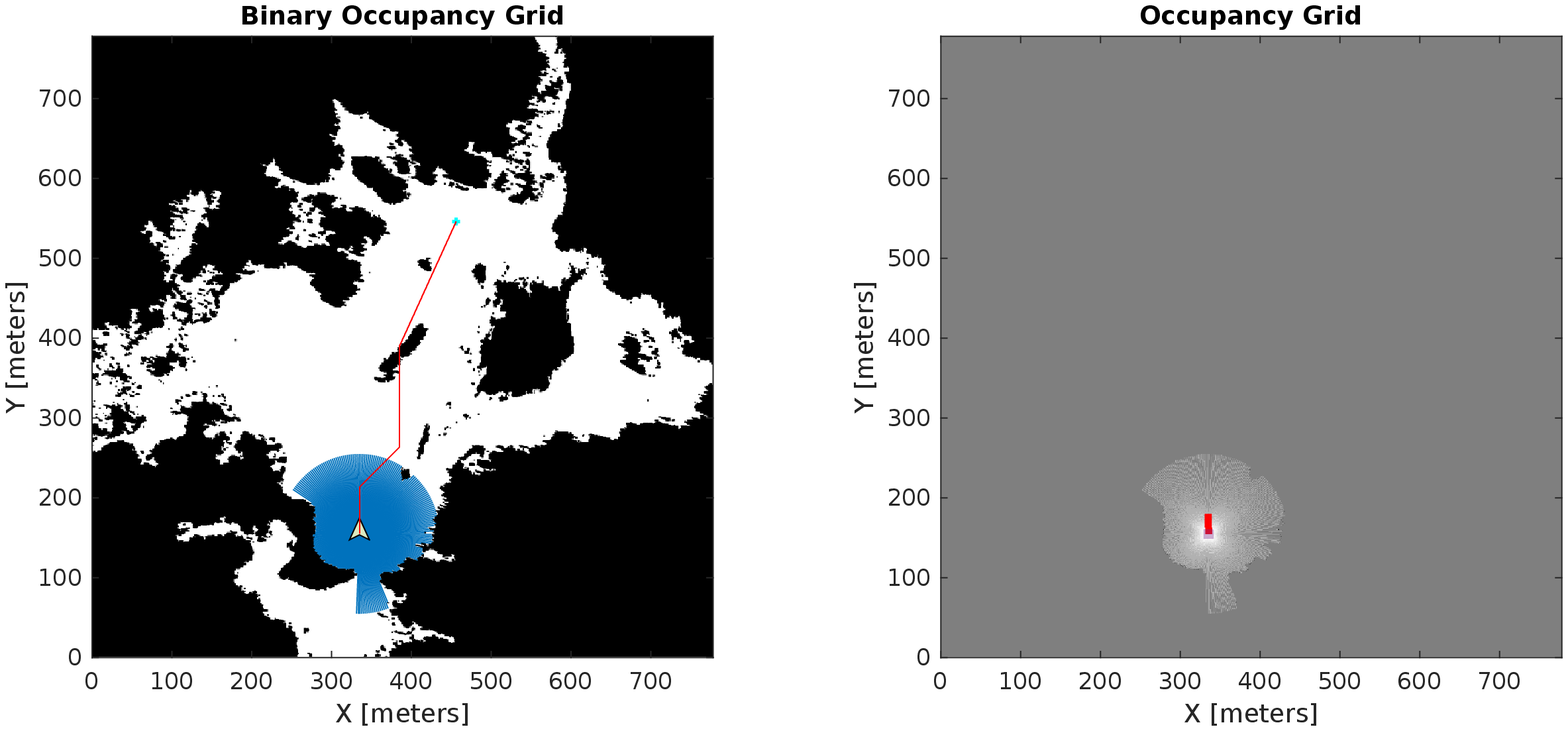}
		\caption{$t=0s$}
		\label{fig:fm0}
	\end{subfigure}
	\begin{subfigure}{\linewidth}
		\centering
		\includegraphics[width=.7\textwidth]{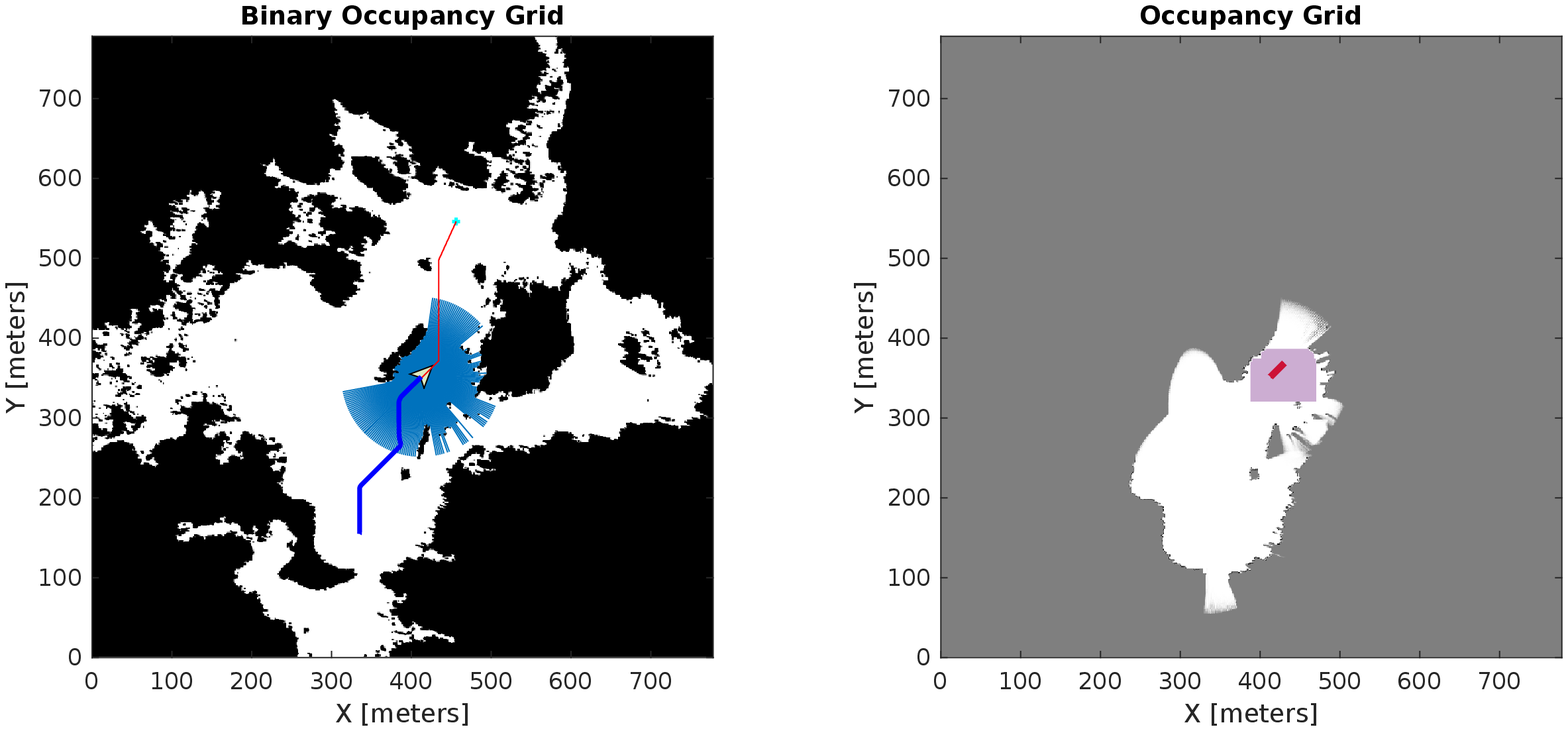}
		\caption{ $t= 40s$}
		\label{fig:fm1}
	\end{subfigure}
	\begin{subfigure}{\linewidth}
		\centering
		\includegraphics[width=.7\textwidth]{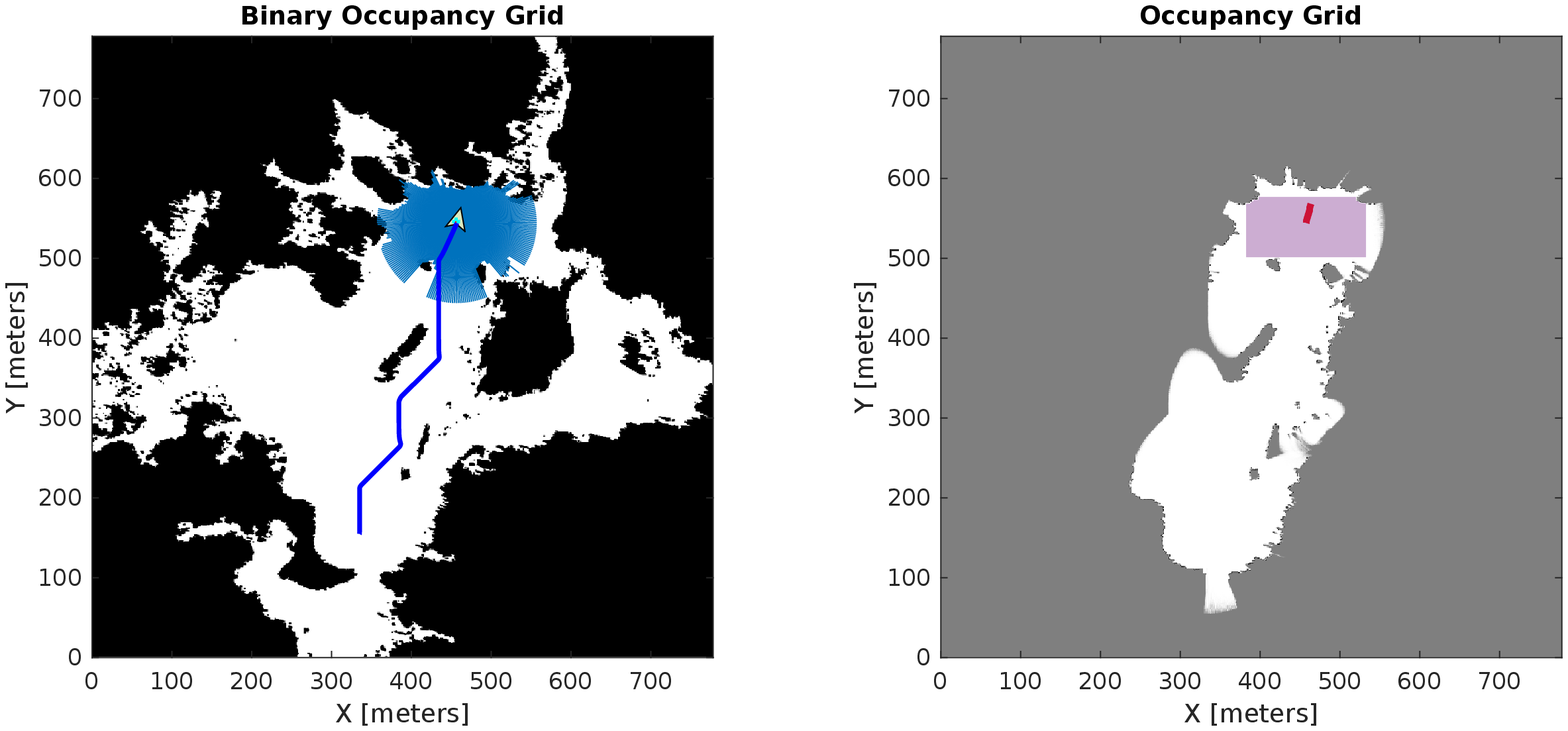}
		\caption{ $t= 78.6 s$}
		\label{fig:fm2}
	\end{subfigure}
	\caption{\small Simulation screenshots at times $t=0s$, $40s$ and $78.6$s. \\
	Left: reference path (red) and actual trajectory (blue); the disk of blue rays is the LiDAR measurements; the arrowhead indicates the position and moving direction of the robot. \\
	Right: obstacle-free region $M(t)$ (white) and unknown region (grey); purple boxes are the $10$ largest boxes in $M(t)$ that contain the current robot position.
	}
	\label{fig:simu}
\end{figure}

In our simulations, we use a linear feedback controller as the nominal controller. The MATLAB Navigation Toolbox is used to simulate a LiDAR sensor with sensing range of $100~m$, update the map, and generate the reference path based on the A* algorithm. The simulation parameters are $\TL=(6,4)$, \mbox{$T_s = 0.1s$}, $\overline{v} = 5 m/s$, $\overline{u} = 5 m/s^2$. The mixed-integer program $\mathcal{P}(t,t^*)$ is implemented via YALMIP \cite{lofberg2004yalmip} and solved by GUROBI \cite{gurobi}. The average computation time for constructing $\iCTL(t)$ and solving $\mathcal{P}(t,t^*)$ at each time step is $2.87 s$. The average computation time shows the efficiency of our method, considering the safe set is nonconvex and being updated at every time step. 

The simulation results are shown in Fig. \ref{fig:simu}. The robot reaches the target region at $t=78.6s$, and thanks to the supervisor, it satisfies the input and velocity constraints, while always staying within the time-varying safe region. As a comparison, when the supervisor is disabled, the velocity constraint is violated at time $t=1.2s$. The full simulation video can be found at \url{https://youtu.be/mB9ir0R9bzM} .

\subsection{Scalability and quality}
\label{subsec:scalability}
In this subsection we illustrate the scalability of the proposed method and compare with other methods in the literature. We consider a system of dimension $n$ as in \eqref{eq:dtls} that is already in Brunovsky normal form \cite{brunovsky1970control}. 
 \begin{align*}
 A_n = 
 \begin{bmatrix}
 	0 & \I 	\\
 	0 & 0
 \end{bmatrix}
 ,
 ~
 B_n = 
 \begin{bmatrix}
 	\0 \\
 	1
 \end{bmatrix}
 ,
 \end{align*}
 where $A_n \in \R^{n \times n}$ and $B_n \in \R^{n}$. This assumption does not affect empirical performance measurements as the transformation that brings a system in the above form is system-dependent and, thus, can be computed offline just once. To generalize the assessment of performance, we generate the safe set as a random polytope of dimension $n$ and we average the results over multiple runs. Moreover, we constraint our input to $[-0.5,0.5]$ and the disturbance to $[-0.1,0.1]$. 

\subsubsection{Scalability of implicit invariant sets}
We begin with the case of no disturbances. Fig.~\ref{fig:bru-hierarchy-no-disturb-2N} and Fig.~\ref{fig:bru-hierarchy-no-disturb-NN} show the times to compute the implicis CIS $\iCmL$ 
for safe sets with $2n$ and $n^2$ constraints respectively. $\iCmL$ can be computed in less than $0.5s$ for systems of size $n=200$ when the safe set has $2n$ constraints, and in around $5s$ for $n=100$ and safe sets with $n^2$ constraints, that is $10000$ constraints in this example. 

\begin{figure}[t!]
\begin{subfigure}{.5\linewidth}
    \includegraphics[width=1.0\linewidth]{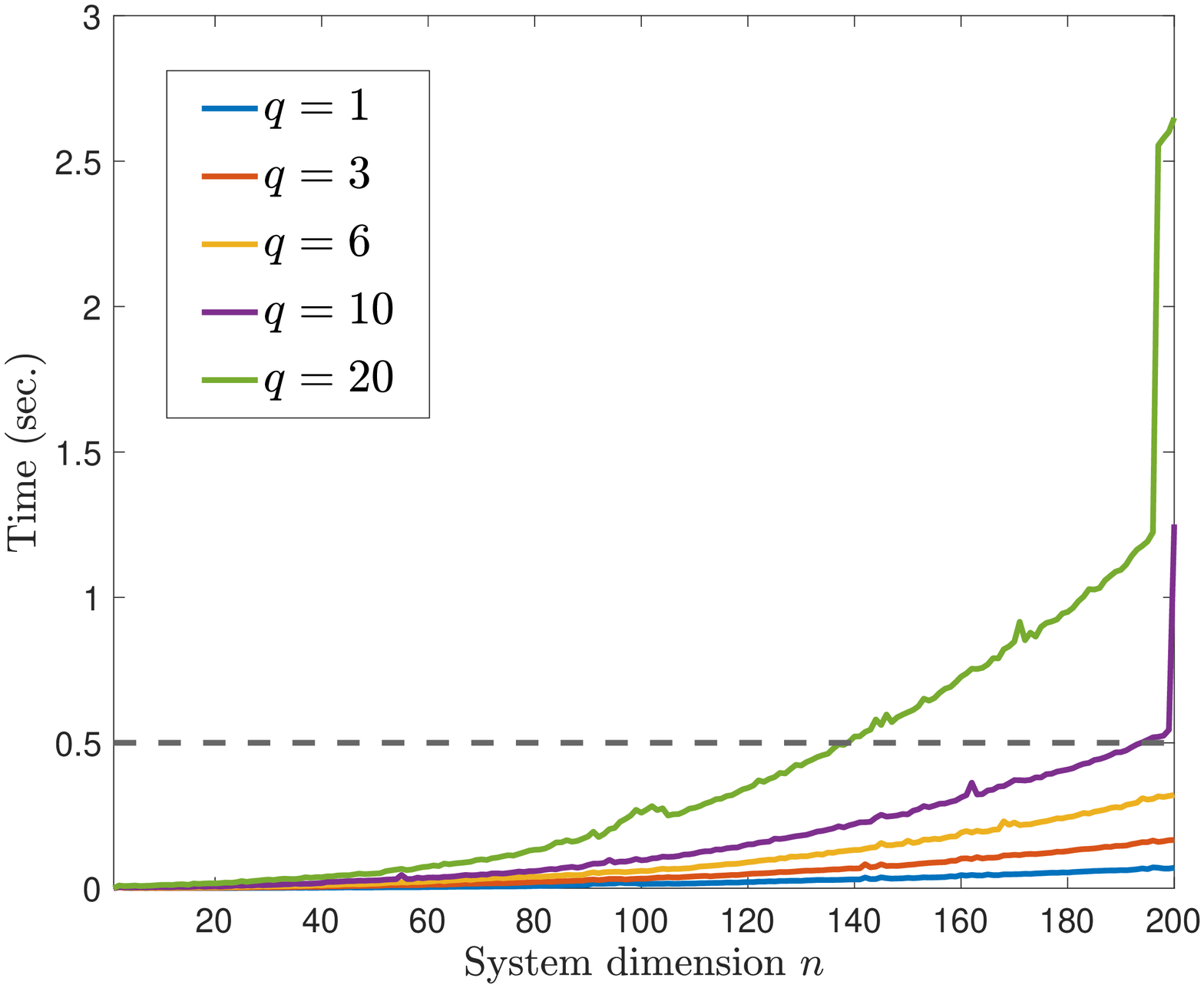}
    \caption{}
    \label{fig:bru-hierarchy-no-disturb-2N}
\end{subfigure}%
\begin{subfigure}{.5\linewidth}
    \includegraphics[width=1.0\linewidth]{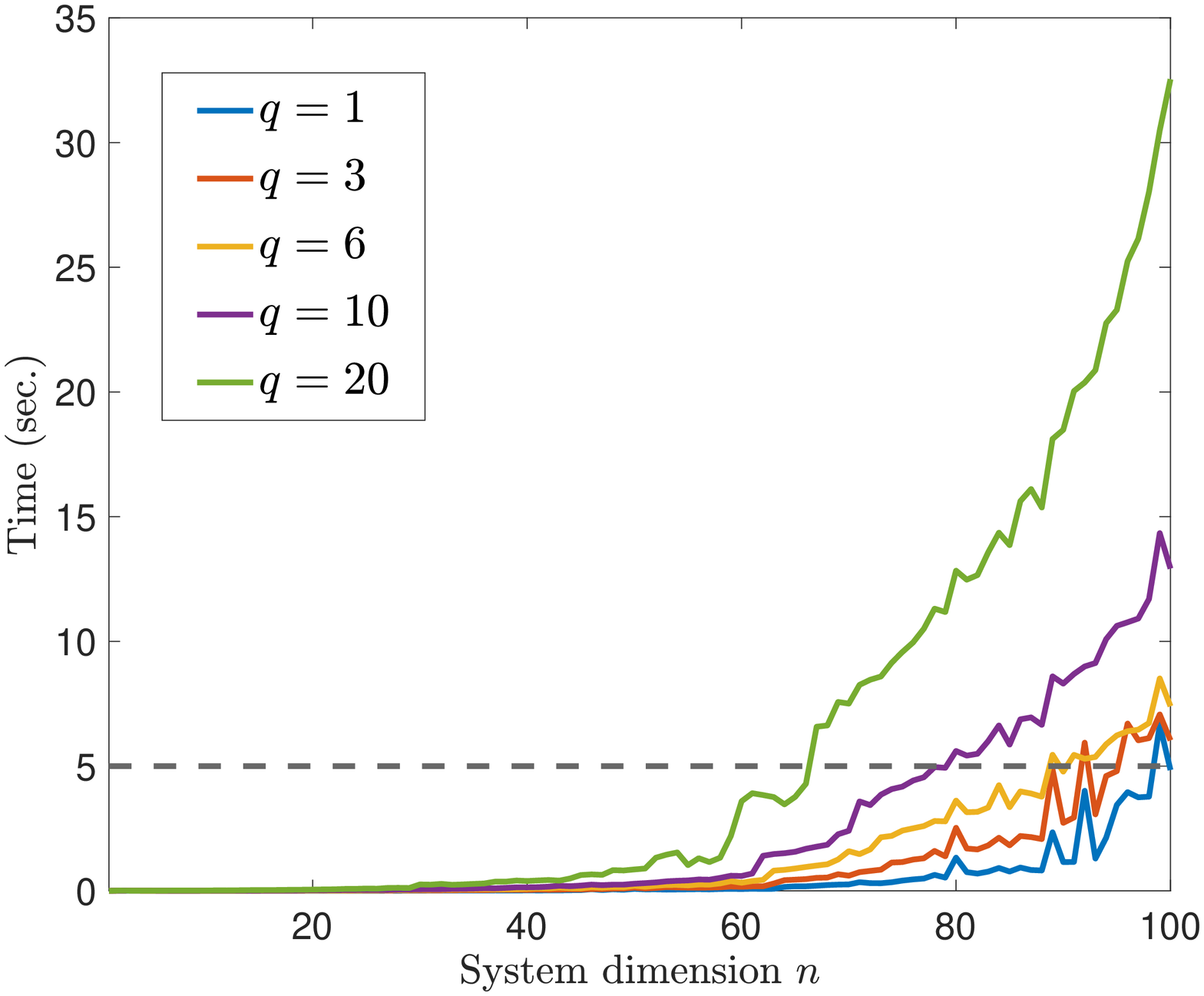}
    \caption{}
    \label{fig:bru-hierarchy-no-disturb-NN}
\end{subfigure}
\caption{Absence of disturbances. Computation times for implicit CISs for different levels $q$ of the full hierarchy, i.e., computing $q$ Implicit CISs per level. (a) Safe sets with $2n$ constraints, $n\leq200$. (b) Safe sets with $n^2$ constraints, $n\leq100$.}
\label{fig:bru-plots-no-disturbance}
\end{figure}

We now proceed to the case where system disturbances are present. In Fig.~\ref{fig:bru-hierarchy-disturb} and Fig.~\ref{fig:bru-single-disturb}, we observe that in the presence of disturbances computations are slower and, actually, are almost identical for different values of $q$. This is attributed to the presence of the Minkowsky difference in the closed-form expression~\eqref{eq:iRCISconditions} that dominates the runtime and depends on the nilpotency index of the system. Still, we are able to compute implicit RCISs in closed-form for systems with up to $20$ states fairly efficiently in this experiment. 

\begin{figure}[t!]
\begin{subfigure}{.5\linewidth}
    \includegraphics[width=1.0\linewidth]{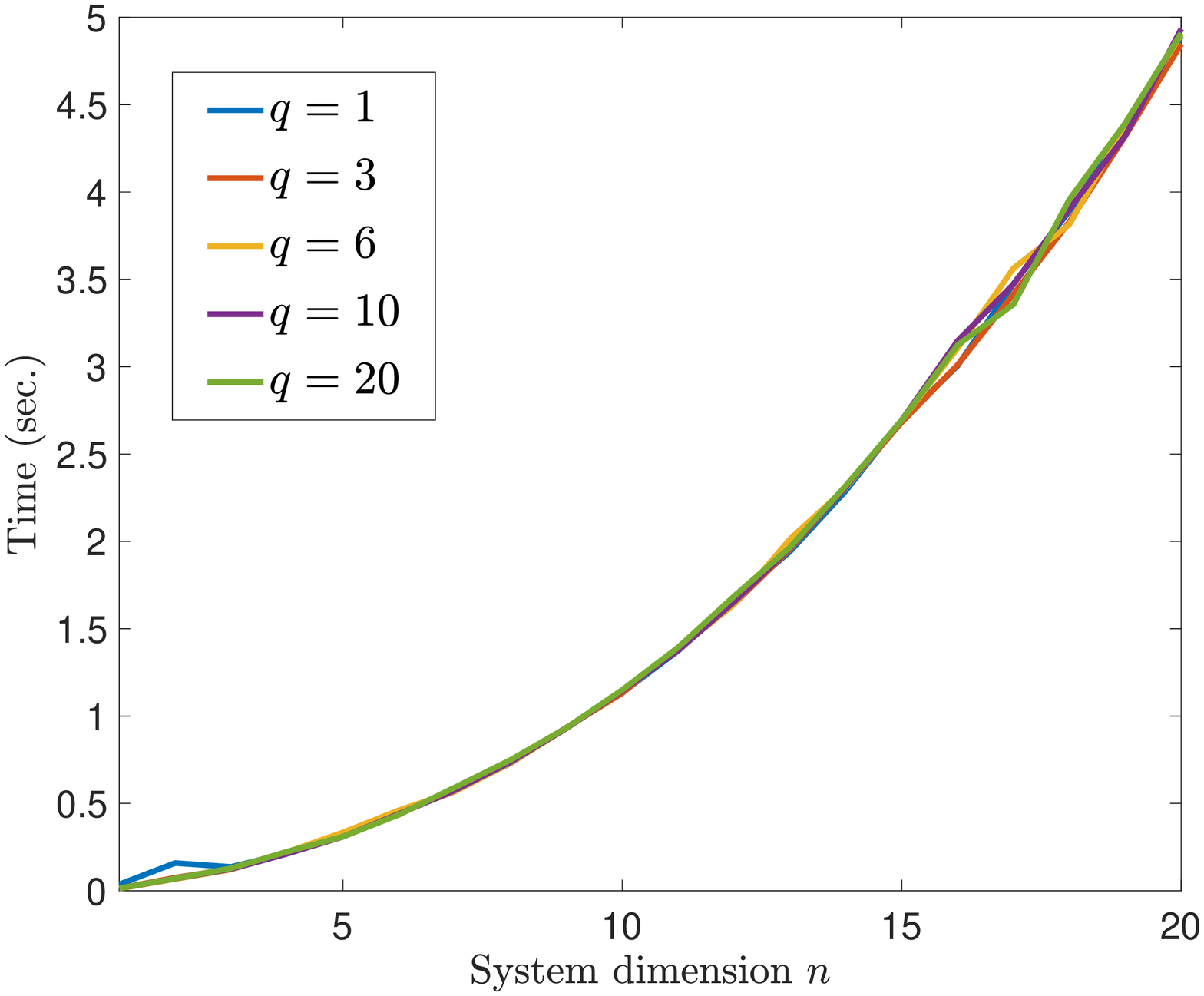}
    \caption{}
    \label{fig:bru-hierarchy-disturb}
\end{subfigure}%
\begin{subfigure}{.5\linewidth}
    \includegraphics[width=1.0\linewidth]{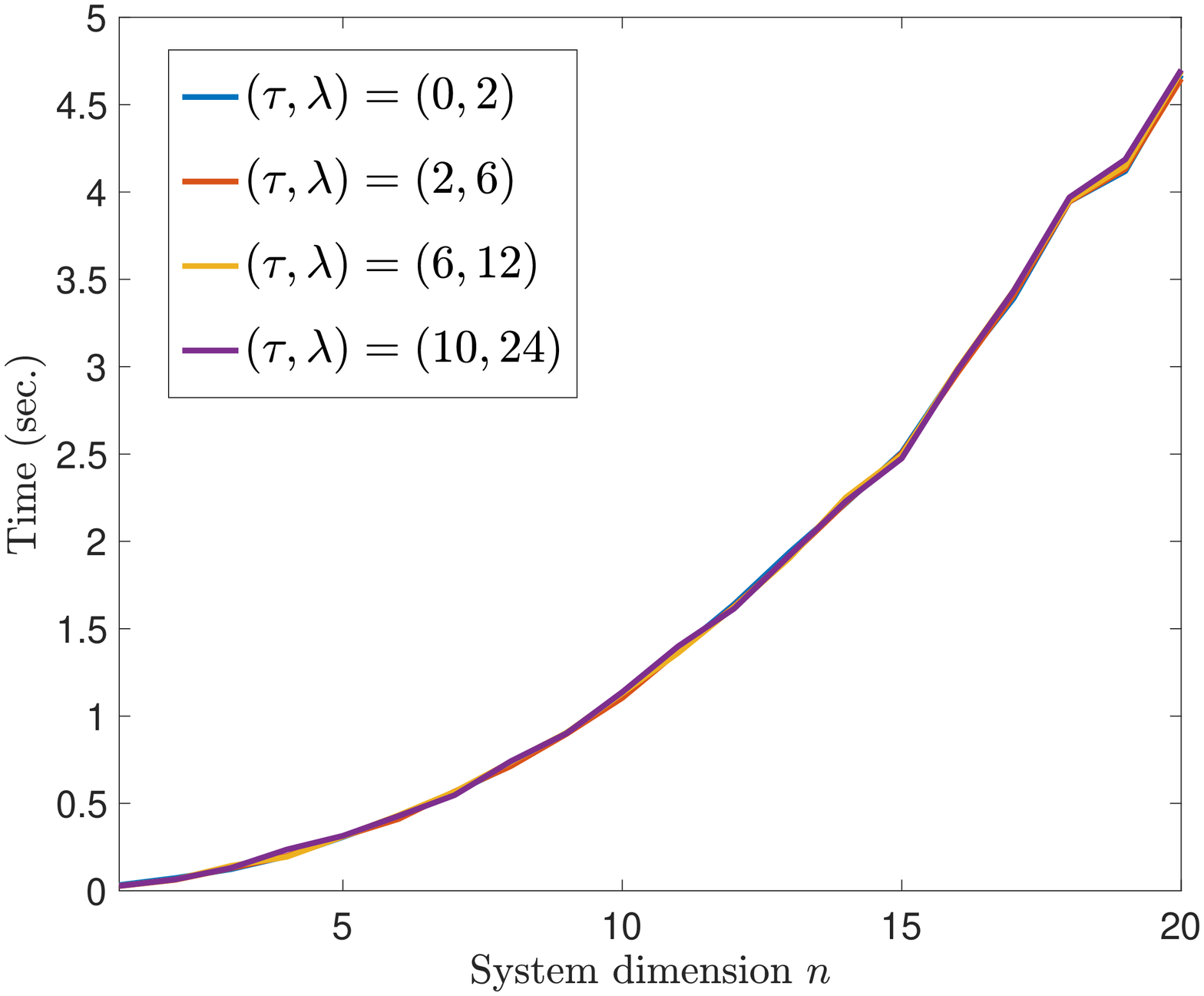}
    \caption{}
    \label{fig:bru-single-disturb}
\end{subfigure}
\caption{Presence of disturbances. Safe sets with $2n$ constraints, $n\leq20$. Computation times for Implicit RCISs. (a) Different levels $q$ of the hierarchy. (d) Individual implicit RCIS, $\iCTL$, for different values of $\TL$.}
\label{fig:bru-plots-disturbance}
\end{figure}

The above results suggest the efficiency and applicability of our approach to scenarios involving online computations, as shown already in Section~\ref{subsec:planning-exp}. Moreover, in our experience, the numerical result of a projection operation, depending on the method used, can be sometimes unreliable. Contrary to this, our closed-form implicit representation does not suffer from such drawback. 

\subsubsection{Quality of the computed sets and comparison to other methods}
We now compare our method with different methods in the literature, both in runtime and quality of the computed sets as measured by the percentage of their volume compared to the Maximal (R)CIS. Even though, we already provided a comprehensive analysis in terms of runtime for our method, we still present a few cases for the shake of comparison. 
We compare our approach to the Multi-Parametric Toolbox (MPT3) \cite{MPT3} that computes the Maximal (R)CIS, $\C_{max}$, the iterative approach in \cite{tahir2015lowcomplnormbound} that computes low-complexity (R)CISs, and the one in \cite{legat2018cishybsys} that computes ellipsoidal CISs. 

\begin{figure}[t!]
	\centering
	\includegraphics[width=0.7125\linewidth]{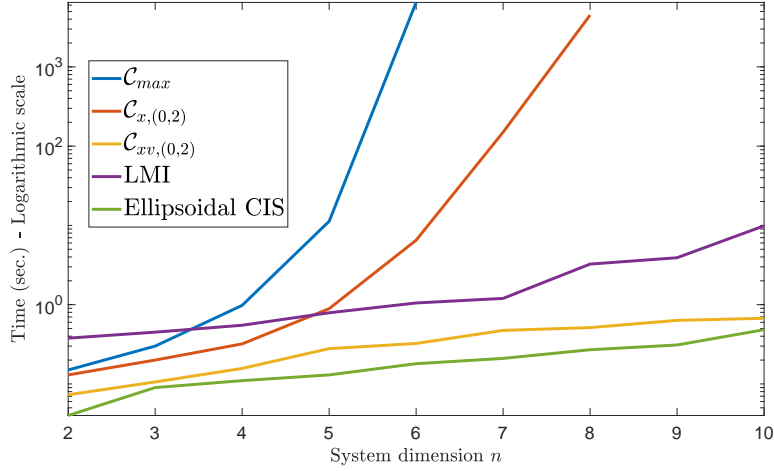}
\caption{Computation times for $\mathcal{C}_{xv,(0,2)}$, its projection $\mathcal{C}_{x,(0,2)}$, the LMI method in \cite{tahir2015lowcomplnormbound}, the ellipsoidal CIS in \cite{legat2018cishybsys}, and $\C_{max}$. Logarithmic scale. Note: \cite{legat2018cishybsys} is evaluated in the absence of disturbances as it considers only nominal systems. For the other methods the performance without disturbance is similar or better.}
\label{fig:comparison-runtimes}
\end{figure}

The runtimes of each method are reported in Fig.~\ref{fig:comparison-runtimes}. The difficulty of computing $\C_{max}$ is apparent from the steep corresponding curve. 
The low-complexity methods in \cite{tahir2015lowcomplnormbound} and \cite{legat2018cishybsys} are considerably faster, and \cite{legat2018cishybsys} is slightly faster than even our implicit representation. However, our sets are superior in quality as we detail next. 

First, in the absence of disturbances, the relative volume of the computed sets with respect to $\C_{max}$ is presented in Table~\ref{table:bru-vol-no-disturbance}. 
Since for $n\geq7$ MPT3 does not terminate after several hours and the computed set before termination is not invariant, we present the relative volumes only for \mbox{$2\leq n \leq 6$}. Our method returns a very close approximation of $\C_{max}$ even with small values of $\TL$ and computes substantially larger sets compared to the other techniques. This supports our theoretical result in Corollary~\ref{cor:nominal_converge}. In other words, our implicit representation retains the best out of two worlds: computational efficiency and close approximations of $\C_{max}$. 

\begin{table}[t!]
\caption{
Absence of disturbances. Volume percentage with respect to the Maximal CIS. \\ 
Algorithms: Our method for different implicit CISs $\iCTL$, the LMI method in \cite{tahir2015lowcomplnormbound}, and the method in \cite{legat2018cishybsys} computing ellipsoidal CISs. (S) denotes a singleton set. 
}
\label{table:bru-vol-no-disturbance}
\centering
\begin{tabular}{c|ccccccc}
\hline
				&      & \multicolumn{2}{c}{Our method}         & \makecell{LMI method\\ \cite{tahir2015lowcomplnormbound}} & \makecell{Ellipsoidal\\CIS method \cite{legat2018cishybsys}} \\
\hline
\makecell{System\\dimension}    & & $\C_{xv,(0,2)}$ & $\C_{xv,(4,2)}$ &  & \\
\hline
$n=2$                       &    & 100 & 100 & 42.43 &  45.69      \\
\hline
$n=3$                       &    & 100 & 100 & 16.31 & 24.66        \\
\hline
$n=4$                       &    & 99.92 & 100   & 3.69 & 14.41   \\
\hline
$n=5$                       &    & 99.75 & 100   &  0.47  & 10.50  \\
\hline
$n=6$                       &    & 97.81 & 100   & 0 (S) & 3.89  \\
\hline
\end{tabular}
\end{table}

\begin{table}[t!]
\caption{
Presence of disturbances. Volume percentage with respect to the Maximal RCIS. \\
Algorithms: Our method for different implicit RCISs $\iCTL$ and the LMI method in \cite{tahir2015lowcomplnormbound}.  (S) denotes a singleton set. 
}
\label{table:bru-vol-disturbance}
\centering
\def\arraystretch{}
\begin{tabular}{c|cccccccc}
\hline
Volume (\%)              &      & \multicolumn{3}{c}{Our method}         & \makecell{LMI method \cite{tahir2015lowcomplnormbound}}  \\
\hline
\makecell{System\\dimension}    & & $\C_{xv,(0,2)}$ & $\C_{xv,(2,2)}$ & $\C_{xv,(4,2)}$ &  \\
\hline
$n=2$                       &    & 100 & 100 & 100 & 31.99       \\
\hline
$n=3$                       &    & 98.24 & 99.67 & 99.96 & 16.35        \\
\hline
$n=4$                       &    & 99.02 & 99.42 & 99.88   & 4.36	\\
\hline
$n=5$                       &    & 98.75 & 99.74 & 99.81   &  3.64  \\
\hline
$n=6$                       &    & 91.17 & 96.07 & 97.91   & 0 (S)  \\
\hline
\end{tabular}
\end{table}

In the presence of disturbances, the results are similar and are reported in Table~\ref{table:bru-vol-disturbance}, where we omit \cite{legat2018cishybsys} that is not designed for the case of disturbances. Theorem~\ref{thm:convergence} proves that our method converges to its outer bound $\C_{outer,\nu}$. Here, we can appreciate that empirically $\C_{outer,\nu}$ approximates very closely $\C_{max}$, even in the presence of disturbances, based on the size of the sets computed by our method. However, the gap between $\C_{outer,\nu}$ and $\C_{max}$ depends on the size of the disturbance as shown next. 

We illustrate how the size of the disturbance set affects our performance. We fix the safe set to be a random polytope in $\R^4$ and constraint the input to $[-0.5,0.5]$. The disturbance set is \mbox{$W=[-\overline{w},\overline{w}]$} and we increase $\overline{w}$ as in Table~\ref{table:increase-disturbance}. Recall the nominal system $\overline{\Sigma}$ and the nominal safe set \mbox{$\overline{S}_{xu} = S_{xu}-\overline{W}_{\infty}$}, and let $\overline{\Delta}_{xu}$ be the set of fixed points of $\overline{\Sigma}$, which is in Brunovsky normal form. We can show that \mbox{$\overline{\Delta}_{xu} = \{(x,u)\in\R^4\times\R | x_1 = x_2 = x_3 = x_4 = u\}$}.  As Table~\ref{table:increase-disturbance} details, by increasing the size of $W$ the our RCIS shrinks at a faster rate compared to $C_{max}$, until finally $\overline{S}_{xu}$ is empty and, hence, does not contain any fixed points from $\overline{\Delta}_{xu}$. This is when the set we compute becomes empty as well.

\begin{table}[t!]
\caption{
Increasing the size of the disturbance set \mbox{$W=[-\overline{w},\overline{w}]$}. 
Volume percentage of $\C_{x,(2,2)}$ 
with respect to the Maximal RCIS and volume percentage of $\overline{S}_{xu}$ with respect to $S_{xu}$. (NE) set is nonempty. (E) set is empty. 
}
\label{table:increase-disturbance}
\centering
\def\arraystretch{}
\begin{tabular}{c|cccccccc}
\hline
$\overline{w}$    & 0.05 & 0.10 & 0.15 & 0.20 & 0.25 & 0.30 & 0.35 & 0.40 \\
\hline
\makecell{$\frac{vol~\C_{x,(2,2)}}{vol~\C_{max}}$}  & 99.9 & 99.7 & 99.3 & 98.1 & 91.8 & 10.5 & \makecell{$\Cx$\\empty} & \makecell{$\C_{max}$\\empty}  \\
\hline
\makecell{$\frac{vol~\overline{S}_{xu}}{vol~S_{xu}}$}   & 63.9 & 38.2 & 20.5 & 9.2 & 2.6 & 0.1 & \makecell{$\overline{S}_{xu}$\\empty} & \makecell{$\overline{S}_{xu}$\\empty} \\
\hline
\makecell{$\overline{S}_{xu} \cap \overline{\Delta}_{xu}$}   & NE & NE & NE & NE & NE & NE & E & E\\
\hline
\end{tabular}
\end{table}

\section{Related literature}
\label{subsec:litreview}
Recent works by the authors \cite{anevlavis2019cis2m,anevlavis2020simple,anevlavis2021enhanced} develop methods constructing implicit RCISs in closed-form. These approaches consider different collections of periodic input sequences, which can be viewed as special instances of the parameterization proposed here. Furthermore, in this work we provide theoretical performance results, both for completeness and convergence, which extend to the previous methods as special cases.
The concept of implicit RCISs is also explored in \cite{fiacchini2017ciseasy} for nominal systems and in \cite{wintenberg2020implicit} for systems with disturbances. However, different from our method, \cite{fiacchini2017ciseasy, wintenberg2020implicit} both need to check a sufficient condition on set recurrence by LPs and do not provide completeness guarantees. 

In addition to the aforementioned methods, a plethora of works have attempted to alleviate the poor scalability and the absence of termination guarantees of the standard method for computing the Maximal CIS of discrete-time systems introduced in \cite{bertsekas1972infreach}. The following list is not exhaustive. 

One line of work \cite{korda2014cispolysys, oustry2019innerapproxcis} focuses on outer and inner approximations of the Maximal CIS by solving either LPs or QPs. The resulting sets, however, are not always invariant. Comparing to these works, our method provides sets that are always guaranteed to be invariant and, given our closed-form expression, scales better with the system dimension.

Other methods compute exact ellipsoidal CISs efficiently and, thus, offer improved scalability, such as \cite{legat2018cishybsys} which solves Semi-Definite Programs (SDP) for a class of hybrid systems. Nevertheless, the resulting ellipsoidal sets are generally small. This is backed by our comparison studies, which show that even though \cite{legat2018cishybsys} computes very efficiently exact CISs, our implicit CIS offers similar computational performance, but substantially better quality in terms of approximating the Maximal CIS. In addition, for online control problems, like MPC and supervisory control, polytopes are preferred to ellipsoids, as they result in LPs or QPs, which are solved more efficiently compared to Quadratically Constrained Quadratic Programs (QCQP) that stem from ellipsoids. 

In the presence of bounded disturbances, when the set of safe states are polytopes, \cite{rungger2017rcislinsys} computes inner and outer approximations of the Maximal RCIS for linear systems. However, this iterative method suffers from the usual problem of performing an expensive projection operation in between iterations, which hinders its applicability in practice.

Ideas similar to ours, in the sense that finite input sequences are used, have been explored in the context of MPC \cite{mayne2005robustmpc}. The goal there is to establish asymptotic stability of a linear system, whereas in our case we exploit finite input sequences that describe the proposed control behavior, which leads to a closed-form expression for an implicit representation of controlled invariant sets. Other popular approaches first close the loop with a linear state-feedback control law, and then compute an invariant set of the closed-loop system. Under this umbrella, an idea close to ours is found in \cite{lazar2015finstepmpc}, where recurrent sets are computed in the context of MPC without disturbances. This can be understood as a special case of our eventually periodic approach. 

In a similar spirit, i.e., by restricting to linear state-feedback control laws, the following works focus on reducing the computational cost and employ iterative procedures to compute low- or fixed-complexity RCISs and their associated feedback gains. In \cite{tahir2015lowcomplnormbound} low-complexity RCISs are found via SDPs under norm-bounded uncertainties. More recently, \cite{gupta2019lowcompuncertainparams, gupta2019fullcompluncertainparams} compute low- and fixed-complexity RCISs respectively for systems with rational parameter dependence. The complexity, i.e., the number of inequalities of the set, in \cite{gupta2019lowcompuncertainparams} is twice the number of states, while \cite{gupta2019fullcompluncertainparams} is more flexible as the complexity can be pre-decided. These methods assume the RCIS to be symmetric around the origin, whereas we make no assumptions on the RCIS. Arguably, pre-deciding the complexity is valuable for applications, such as MPC, but it can be very conservative. Increasing the set complexity to obtain larger sets hinders performance of said iterative methods. In comparison, we offer an alternative way to obtain larger sets, by increasing the transient and/or the period of the eventually periodic input parameterization. This bears minimal computation impact due to the derived closed-form expression. 

The work of \cite{munir2018highdegreelyap} computes larger controlled contractive sets of specified degree for nominal linear systems by solving Sum Of Squares (SOS) problems, but requires prior knowledge of a contractive set. Their scalability is also limited by the size of the SOS problems and so is its extension to handle polytopic uncertainty, which significantly increases the SOS problem size. Again, our method offers improved scalability along with the ability to increase the size of the computed set with minimal performance impact, as is backed by our experiments.

\section{Appendix}
\subsection{Claims of Theorem~\ref{thm:convergence}}
\label{sec:proof_of_claims}

\emph{Proof of Claim 1}: Since $\overline{S}_{xu}$ contains the origin, we have $\overline{W}_{ \infty}\times \{0\}  \subseteq S_{xu}$.  Since $0\in W$, it is easy to verify from \eqref{eq:acc_dist_set} and \eqref{eq:min_rpis} that $\overline{W}_{k} \subseteq \overline{W}_{ \infty}$ for all $k\geq 1$. Thus, $\overline{W}_{k}\times \{0\} \subseteq S_{xu} $ for all $k\geq 1$. According to \eqref{eq:reachableset}, if $x=0$ and $u_{t}=0$ for all $t\geq 0$, the reachable set $\left( \reach{x}{\{u_{i}\}_{i=0}^{t-1}},u_{t}\right)= \overline{W}_{t}\times \{0\} \subseteq S_{xu}$. Thus, $0\in \C_{xv,0}$. 

\emph{Proof of Claim 2}: Recall from \eqref{eq:C_nominal_pre} that $\C_{outer,\nu}$ is:
    \begin{align*} 
    \begin{aligned}
	\C_{outer,\nu} = \Big\{x \in \R^{n} \mid \exists \{u_{i}\}_{i=0}^{\nu-1} \in \R^{m\nu}, &\left(\reach{x}{\{u_{i}\}_{i=0}^{t-1}},u_{t}\right) \subseteq S_{xu}, t = 0,\dots, \nu-1, \\
	&\reach{x}{\{u_{i}\}_{i=0}^{\nu-1}} \subseteq \overline{\C}_{\max} + \overline{W}_{\infty} 
	\Big\}.
    \end{aligned}
    \end{align*}
Due to Lemma \ref{thm:inv_sum}, $\reach{x}{\{u_{i}\}_{i=0}^{\nu-1}} \subseteq \overline{\C}_{\max} + \overline{W}_{\infty}$ if and only if $\sum_{i=0}^{\nu-1} A^{\nu-1-i}Bu_{i}\in \overline{\C}_{max}$. Based on this observation, it is easy to verify that $\C_{outer, \nu} = \pi_{n} (\C_{xv,max})$ where $\C_{xv,max} = \C_{xv,0}\cap ( \R^{n} \times \mathcal{U}(\overline{\C}_{max}))$.

\emph{Proof of Claim 3}: We show that $\widehat{\C}_{xv, (\tau, \lambda)}=  \pi_{n+\nu m}(\C_{xv,(\tau, \lambda)})$. Using the matrices $H$ and $P$ as in \eqref{eq:lassobehavior}, by the definition of $\iCTL$ and Lemma \ref{thm:inv_sum}, we can write $\C_{xv,(\tau, \lambda)}$ as: 
\begin{align} 
\label{eq:53} 
\begin{split}
	\C_{xv,(\tau, \lambda)} = \Big\{(x_0,u_{0:\tau+\lambda-1}) \mid &\left(\reach{x}{\{u_{i}\}_{i=0}^{t-1}},u_{t}\right) \subseteq S_{xu}, t= 0, \cdots, \nu-1, \\
					&(\mathcal{R}_{\overline{\Sigma}}(\sum_{i=1}^{v}A^{i-1}Bu_{v-i}, \{u_{v+i}\}_{i=0}^{k-1}),u_{v+k})\in \overline{S}_{xu}, k=0, \cdots, \tau + \lambda-1\Big\}. 
\end{split}
\end{align}
By \eqref{eq:53} , the projection $\pi_{n+\nu m}(\C_{xv,(\tau, \lambda)})$ is: 
\begin{align} \label{eq:54} 
\begin{split}
	\pi_{n+\nu m}(\C_{xv,(\tau, \lambda)}) = \Big\{(x_0,u_{0:\nu-1}) \mid &\exists u_{\nu:\tau+ \lambda-1}, \left(\reach{x}{\{u_{i}\}_{i=0}^{t-1}},u_{t}\right) \subseteq S_{xu}, t= 0, \cdots, \nu-1,\\
							 &(\mathcal{R}_{\overline{\Sigma}}(\sum_{i=1}^{\nu}A^{i-1}Bu_{\nu-i}, \{u_{\nu+i}\}_{i=0}^{k-1}),u_{\nu+k})\in \overline{S}_{xu}, k=0, \cdots, \tau + \lambda-1\Big\}.
\end{split}
\end{align}
Again, using the matrices $H$ and $P$ as in \eqref{eq:lassobehavior}, by the definition of $\overline{\C}_{x,(\tau-\nu, \lambda)}$, we have: 
\begin{align} 
\label{eq:55} 
	\overline{\C}_{x,(\tau-\nu, \lambda)} = &\Big\{x_0\in \R^{n} \mid  \exists u_{0:\tau-\nu + \lambda}, ( \mathcal{R}_{\overline{\Sigma}}(x_0, \{u_{i}\}_{i=0}^{t-1} ),u_{t})\in \overline{S}_{xu}, t= 0, \cdots, \tau + \lambda-1 \Big\}. 
\end{align}
Comparing the right hand sides of \eqref{eq:54} and \eqref{eq:55}, we have: 
\begin{align} \label{eq:56} 
\begin{split}
	\pi_{n+\nu m}(\C_{xv,(\tau, \lambda)}) =\Big\{(x_0,u_{0:\nu-1}) \mid &\left(\reach{x}{\{u_{i}\}_{i=0}^{t-1}},u_{t}\right) \subseteq S_{xu}, t= 0, \cdots, \nu-1, \\
	&\sum_{i=1}^{\nu}A^{i-1}Bu_{\nu-i}\in \overline{\C}_{x,(\tau-\nu, \lambda)}\Big\}. 
\end{split}
\end{align}
Note that $\C_{xv,0}$ and $\mathcal{U}(\overline{\C}_{x,(\tau-\nu, \lambda)})$ respectively impose the first and second constraints on $(x_0, u_{0:\nu-1})$ on the right hand side of \eqref{eq:56}. Thus, $\pi_{n+\nu m}(\C_{xv, (\tau, \lambda)})$ is equal to the intersection of $\C_{xv,0}$ and $\mathcal{U}(\overline{\C})_{x, (\tau-\nu, \lambda)}$. That is: 
\begin{align} \label{eq:57} 
    \widehat{\C}_{xv,(\tau, \lambda)} = \pi_{n+\nu m}(\C_{xv,(\tau, \lambda)}).
\end{align}
Since \eqref{eq:57} implies \eqref{eq:45}, the third claim is proven. 

\emph{Proof of Claim 4}: We define the $k$-step null-controllable set $\C_{k}$ as the set of states of $\overline{\Sigma}$ that reach the origin at $k$th step under the state-input constraints $S_{xu}$: 
\begin{align} \label{eq:58} 
	\C_{k} = \Big\{x \in \R^{n}  &\mid  \exists u_{0:k-1}\in \R^{k m},  (\mathcal{R}_{\overline{\Sigma}}(x, \{u_{i}\}_{i=0}^{t-1}), u_{t} )\in \overline{S}_{xu}, t=0, \cdots, k-1, \mathcal{R}_{\overline{\Sigma}}(x, \{u_{i}\}_{i=0}^{k-1}) =0\Big\}. 
\end{align}
Obviously, $\C_{0} = \{0\}$. Since $A^{\nu} = 0$ and the fixed point $(0,0) \in \R^{n}\times \R^{m}$ is in the interior of  $\overline{S}_{xu}$, there exists an $ \epsilon >0$ such that the $ \epsilon$-ball $B_{ \epsilon}(0)$ at the origin satisfies that for $u_{0:\nu-1} = 0\in \R^{\nu m}$ and for all $t\in [0,  k-1]$: 
\begin{align}
	\begin{split} \label{eq:59} 
    & (\mathcal{R}_{\overline{\Sigma}}(B_{ \epsilon}(0), \{u_{i}\}_{i=0}^{t-1}), u_{t} ) = (A^{t}B_{ \epsilon}(0),0) \subseteq \overline{S}_{xu},\\ 
    &\mathcal{R}_{\overline{\Sigma}}(x, \{u_{i}\}_{i=0}^{\nu-1} =A^{\nu} B_{ \epsilon}(0) = 0.
	\end{split}
\end{align}
By \eqref{eq:59} and the definition of $\C_{k}$, $B_{ \epsilon}(0)$ is contained by $\C_{\nu}$, and thus $\C_0 = \{0\} $ is contained in the interior of $\C_{\nu}$. Then, by Theorem 1 in \cite{liu22conv}, since $\C_0$ is contained in the interior of $\C_{\nu}$, there exists $\tau_2 \geq 0$, $c_2\in [0,1]$ and $a \in [0,1)$ such that for all $k\geq \tau_2$, the Hausdorff distance $d(\C_{k}, \overline{\C}_{max})$ satisfies that: 
\begin{align} 
\label{eq:60} 
    d(\C_{k}, \overline{\C}_{max}) \leq c_2 a^{k}.
\end{align}
Furthermore, let $k=\tau$. For any $x\in \C_{\tau}$ and the corresponding $u_{0:\tau-1}$ satisfying the constraints on the right hand side of \eqref{eq:58},  it is easy to check that $(x,u_{0:\tau-1},0)\in \R^{n}\times \R^{(\tau+\lambda)m}$ is contained in $\overline{\C}_{xv, (\tau, \lambda)}$. Thus, we have for all $\tau \geq 0$: 
\begin{align} \label{eq:61} 
    \C_{\tau} \subseteq \overline{\C}_{x,(\tau, \lambda)} \subseteq \overline{\C}_{max}.
\end{align}
Thus, by \eqref{eq:60} and \eqref{eq:61}, for any $\tau \geq \tau_2$, the Hausdorff distance $d(\overline{\C}_{x,(\tau, \lambda)}, \overline{\C}_{max} )$ satisfies:  
\begin{align} \label{eq:62} 
    d(\overline{\C}_{x,(\tau, \lambda)}, \overline{\C}_{max} ) \leq c_2a^{\tau}.
\end{align}
From the properties of Hausdorff distance, \eqref{eq:62} implies that: 
\begin{align} \label{eq:63} 
    \overline{\C}_{max} \subseteq \overline{\C}_{x,(\tau, \lambda)} + B_{ c_2 a^{\tau}}(0),
\end{align}
where $B_{c_2a^{\tau}}(0)$ is the ball at origin with radius $c_2 a^{\tau}$. Recall that $\C_{\nu}$ contains a $ \epsilon$-ball $B_{ \epsilon}(0)$ for some $ \epsilon >0$. Since $\C_{\nu} \subseteq \overline{\C}_{max}$, we have $(c_2 a^{\tau}/ \epsilon)\overline{\C}_{max} \supseteq B_{ c_2a^{\tau}}(0)$. Thus, by \eqref{eq:63}, we have for any $\tau \geq \tau_2$: 
\begin{align} 
\label{eq:64} 
    \overline{\C}_{max} \subseteq \overline{\C}_{x,(\tau, \lambda)} + \frac{c_2 a^{\tau}}{ \epsilon} \overline{\C}_{max}.
\end{align}
Select a big enough $\tau_1$ such that $\tau_1 \geq \tau_2$ and $c_2 a^{\tau_1} \leq  \epsilon$. Then, by Lemma \ref{thm:inv_sum} and \eqref{eq:64}, we have for any $\tau \geq  \tau_1$: 
\begin{align*}
\overline{\C}_{x,(\tau, \lambda)} \supseteq	(1-c_0 a^{\tau})\overline{\C}_{max}, 
\end{align*}
where $c_0 = \frac{c_2}{\epsilon}$. Thus, the fourth claim is proven.


\bibliographystyle{alpha}
\bibliography{jMasterBib}


\end{document}